\newtheorem{theorem}{Theorem}[section]
\newtheorem{lemma}[theorem]{Lemma}
\newtheorem{proposition}[theorem]{Proposition}
\newtheorem{corollary}[theorem]{Corollary}
\newtheorem{definition}[theorem]{Definition}
\newtheorem{example}[theorem]{Example}
\theoremstyle{remark}
\newtheorem{notation}[theorem]{Notation}
\theoremstyle{remark}
\newtheorem{remark}[theorem]{Remark}
\newcommand{\ca}{{\mathcal A}}
\newcommand{\cm}{{\mathcal M}}
\newcommand{\cn}{{\mathcal N}}
\newcommand{\cb}{{\mathcal B}}
\newcommand{\cc}{{\mathcal C}}
\newcommand{\cg}{{\mathcal G}}
\newcommand{\co}{{\mathcal O}}
\newcommand{\ch}{{\mathcal H}}
\newcommand{\cd}{{\mathcal D}}
\newcommand{\ci}{{\mathcal I}}
\newcommand{\cR}{{\mathcal R}}
\newcommand{\cs}{{\mathcal S}}
\newcommand{\cu}{{\mathcal U}}
\newcommand{\cv}{{\mathcal V}}
\newcommand{\cw}{{\mathcal W}}
\newcommand{\cz}{{\mathcal Z}}
\newcommand{\dgcat}{\mathsf{dgcat}}
\newcommand{\ccat}{\cc \text{-}\mbox{Cat}}
\newcommand{\spc}{\mathsf{Sp}^{\Sigma}(\cc,K)}
\newcommand{\spcc}{\mathsf{Sp}^{\Sigma}(\cc,K)\text{-}\mbox{Cat}}
\newcommand{\internalcomment}[1]{}
\begin{document}

\title[THH and TC for dg categories]{Topological Hochschild and cyclic homology for differential graded categories}
\author{Gon{\c c}alo Tabuada}
\address{Departamento de Matematica, FCT-UNL, Quinta da Torre, 2829-516 Caparica,~Portugal}

\keywords{Dg category, topological Hochschild homology, topological cyclic homology, Eilenberg-Mac Lane spectral algebra, symmetric spectra, Quillen model structure, Bousfield localization, non-commutative algebraic geometry}

\email{
\begin{minipage}[t]{5cm}
tabuada@fct.unl.pt
\end{minipage}
}

\begin{abstract}
We define a topological Hochschild ($THH$) and cyclic ($TC$) homology theory for differential graded (dg) categories and construct several non-trivial natural transformations from algebraic $K$-theory to $THH(-)$.

In an intermediate step, we prove that the homotopy theory of dg categories is Quillen equivalent, through a four step zig-zag of Quillen equivalences, to the homotopy theory of Eilenberg-Mac Lane spectral categories.

Finally, we show that over the rationals $\mathbb{Q}$ two dg categories are topological equivalent if and only if they are quasi-equivalent.
\end{abstract}

\maketitle

\tableofcontents

\section{Introduction}
In the past two decades, topological Hochschild homology ($THH$) and topological cyclic homology ($TC$) have revolutionized algebraic $K$-theory computations \cite{Madsen} \cite{Madsen1} \cite{Madsen2}. Roughly, the $THH$ of a ring spectrum is obtained by substituting the tensor product in the classical Hochschild complex by the smash product. The $THH$ spectrum comes with a `cyclotomic' structure, and for each prime $p$, $TC$ is defined as a certain homotopy limit over the fixed point spectra.

The aim of this article is to twofold: (1) to defined these new homology theories in the context of {\em differential graded} (dg) categories (over a base ring $R$) \cite{Drinfeld} \cite{ICM} \cite{These}; (2) to relate these new theories with algebraic $K$-theory. Our motivation comes from Drinfeld-Kontsevich's program in non-commutative algebraic geometry \cite{Chitalk} \cite{ENS} \cite{finMotiv}, where dg categories are considered as the correct models for non-commutative spaces. For example, one can associate to a smooth projective algebraic variety $X$ a dg model $\cd^b_{dg}(\mbox{coh}(X))$, i.e. a dg category well defined up to quasi-equivalence (see~\ref{we}), whose triangulated category $H^0(\cd^b_{dg}(\mbox{coh}(X)))$ (obtained by applying the zero cohomology group functor in each Hom-complex) is equivalent to the bounded derived category of quasi-coherent complexes on $X$. For $\cd^b_{dg}(\mbox{coh}(X))$, one could take  the dg category of left bounded complexes of injective $\co_X$-modules whose cohomology is bounded and coherent.

As the above example shows, these theories would furnishes us automatically a well-defined topological Hochschild and cyclic homology theory for algebraic varieties.

In order to define these new homology theories for dg categories, we introduce in chapter~\ref{chapter7} the notion of {\em Eilenberg-Mac Lane spectral category} (i.e. a category enriched in modules over the Eilenberg-Mac Lane ring spectrum $HR$), and prove (by inspiring ourselves in Shipley's work~\cite{Shipley}) our first main theorem.

\begin{theorem}{(see chapter~\ref{global})}\label{M1}
The homotopy theory of differential graded categories is Quillen equivalent, through a four step zig-zag of Quillen equivalences, to the homotopy theory of Eilenberg-Mac Lane spectral categories.
\end{theorem}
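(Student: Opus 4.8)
The plan is to reduce the theorem to a \emph{change of base} statement for enriched categories and to feed into it Shipley's comparison~\cite{Shipley} at the level of the underlying symmetric monoidal model categories. A dg category over $R$ is exactly a (small) category enriched in the symmetric monoidal model category $\cc_{dg}$ of unbounded complexes of $R$-modules, and an Eilenberg--Mac Lane spectral category is a category enriched in the symmetric monoidal model category $\cc_{HR}$ of modules over the Eilenberg--Mac Lane ring spectrum $HR$. Following Shipley~\cite{Shipley} one has a four step zig-zag of Quillen equivalences connecting $\cc_{dg}$ to $\cc_{HR}$, passing through symmetric spectra $\spc$ built on simplicial $R$-modules and on non-negatively graded complexes, the transitions being governed by the Dold--Kan correspondence and the stabilization/forgetful adjunctions. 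The first task is therefore to record, at each of the four stages, that the relevant Quillen pair is a \emph{weak monoidal} Quillen equivalence in the sense of Schwede--Shipley and that its source satisfies the monoid axiom; these are exactly the properties that will survive the passage to enriched categories.

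Second, I would equip each of the five intervening categories of small enriched categories --- with $\dgcat$ at one end and the Eilenberg--Mac Lane spectral categories at the other --- with its Dwyer--Kan type model structure, whose weak equivalences are the quasi-equivalences (see~\ref{we}): the enriched functors that are pointwise weak equivalences on Hom-objects and induce an equivalence on the associated homotopy categories. Existence of each such model structure follows from the general machinery for categories enriched over a cofibrantly generated symmetric monoidal model category satisfying the monoid axiom, once admissibility is checked for each $\cc$ occurring in the zig-zag. With these in hand, a lax weak monoidal right adjoint $U$ induces a functor between enriched categories by applying $U$ Hom-object-wise; the core of the argument is then to produce its left adjoint, to verify that the resulting adjunction is a Quillen pair, and to show that its derived unit and counit are quasi-equivalences.

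The main obstacle will be precisely the \emph{weakness} of the monoidal structure. Because Shipley's functors preserve the tensor product only up to a natural weak equivalence rather than an isomorphism, a levelwise application of the \emph{left} adjoint fails to respect composition, so the enrichment cannot be transported naively. The remedy is to route the construction through the lax monoidal right adjoints, which preserve units and products coherently enough to yield honest enriched functors, and then to deduce the Quillen-equivalence conditions by reducing the derived unit and counit to the pointwise statements already available on Hom-objects. Here the decisive input is that a pointwise weak equivalence of enriched categories which is homotopy essentially surjective is automatically a quasi-equivalence, so that once the Hom-level comparison is established the categorical statement follows. A subsidiary point is to guarantee that cofibrant enriched categories have pointwise-cofibrant Hom-objects, which legitimizes computing the derived functors levelwise; I expect the stage relating unbounded complexes to symmetric spectra, where boundedness and stabilization interact, to be the most delicate.

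Finally, since a composite of Quillen equivalences is again a Quillen equivalence, lifting each of Shipley's four steps as above and splicing them together yields the required four step zig-zag of Quillen equivalences between $\dgcat$ and the homotopy theory of Eilenberg--Mac Lane spectral categories, which is the assertion of the theorem.
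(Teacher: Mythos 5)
Your proposal follows essentially the same route as the paper: lift Shipley's four-step zig-zag of (weak/strong) monoidal Quillen equivalences to categories enriched over each intervening base, with the hybrid quasi-equivalences (pointwise weak equivalence plus homotopy essential surjectivity) as weak equivalences, routing the enriched change of base through the lax monoidal right adjoints and reducing the derived unit and counit to Hom-object statements together with essential surjectivity. This is exactly what the paper carries out in theorems \ref{stable}, \ref{conditionS}, \ref{chave}, \ref{stable1} and proposition \ref{glob4}, including your subsidiary points about pointwise cofibrancy of cofibrant enriched categories and the reflection of the categorical condition.
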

The novelty in these homotopy theories is that in contrast to dg algebras or enriched categories with a fixed set of objects, the correct notion of equivalence (as the above example shows) is not defined by simply forgetting the multiplicative structure. It is an hybrid notion which involves weak equivalences and categorical equivalences (see definition \ref{we}), making not only all the arguments very different but also much more evolved. In order to prove theorem~\ref{M1}, we had to develop several new general homotopical algebra tools (see theorems \ref{stable}, \ref{conditionS}, \ref{chave} and \ref{stable1}) which are of independent interest.

By considering the restriction functor
$$ HR\text{-}\mbox{Cat} \longrightarrow  \mathsf{Sp}^{\Sigma}\text{-}\mbox{Cat}$$
from Eilenberg-Mac Lane to spectral categories (i.e. categories enriched over symmetric spectra), we obtain a well-defined functor
$$\mathsf{Ho}(\dgcat) \stackrel{\sim}{\rightarrow} \mathsf{Ho}(HR\text{-}\mbox{Cat}) \rightarrow \mathsf{Ho}(\mathsf{Sp}^{\Sigma}\text{-}\mbox{Cat})$$
on the homotopy categories. Finally, by composing the above functor with the topological and cyclic homology theories defined by Blumberg and Mandell~\cite{Mandell} for spectral categories, we obtain our searched homology theories
$$ THH, TC:\mathsf{Ho}(\dgcat) \longrightarrow \mathsf{Ho}(\mathsf{Sp})\,.$$

Calculations in algebraic $K$-theory are very rare and hard to get by. Our second main theorem establishes a relationship between algebraic $K$-theory and topological Hochschild homology.
\begin{theorem}[see theorem~\ref{natural}]\label{M2}
Let $R$ be the ring of integers $\mathbb{Z}$. Then we have non-trivial natural transformations
$$
\begin{array}{rcl}
 \gamma_n: K_n(-) & \Rightarrow & THH_n(-)\,,\,\,\,\,\,\, n \geq 0\\
 \gamma_{n,r}: K_n(-) & \Rightarrow & THH_{n+2r-1}(-)\,,\,\,\,\,\,\, n\geq 0\,,\ r\geq 1\,.
\end{array}
$$
from the algebraic $K$-theory groups to the topological Hochschild homology ones.
\end{theorem}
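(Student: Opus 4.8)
The plan is to obtain the whole collection from a single source, the topological Dennis trace of Blumberg--Mandell~\cite{Mandell}, and to produce the degree shifts by letting the B\"okstedt classes of $THH(\mathbb{Z})$ act on the target. First I would transport the trace to dg categories. Theorem~\ref{M1} (followed by the restriction functor to spectral categories) yields the functor $\mathsf{Ho}(\dgcat)\to\mathsf{Ho}(\mathsf{Sp}^{\Sigma}\text{-}\mbox{Cat})$ through which $THH(-)$ is by definition computed, and under which algebraic $K$-theory is preserved (it is a derived Morita invariant, insensitive to the Quillen equivalences of the enrichment). Since Blumberg--Mandell attach to every spectral category $A$ a natural map of spectra $K(A)\to TC(A)\to THH(A)$, precomposition gives, for each dg category $A$ over $R=\mathbb{Z}$, a natural trace $K(A)\to THH(A)$; taking homotopy groups defines $\gamma_n:=\pi_n(\mathrm{tr}):K_n(-)\Rightarrow THH_n(-)$ for all $n\geq 0$.

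For the shifted family I would exploit linearity. Because $R=\mathbb{Z}$, the Eilenberg--Mac Lane spectral category associated to $A$ is enriched in $H\mathbb{Z}$-modules, so $THH(A)$ is naturally a module spectrum over $THH(H\mathbb{Z})\simeq THH(\mathbb{Z})$; this produces natural pairings $THH_m(\mathbb{Z})\otimes THH_n(A)\to THH_{m+n}(A)$. B\"okstedt's computation — $THH_*(\mathbb{Z})$ is $\mathbb{Z}$ in degree $0$, is $\mathbb{Z}/r$ in each odd degree $2r-1$ with $r\geq 1$, and vanishes in positive even degrees — furnishes canonical classes $\sigma_r\in THH_{2r-1}(\mathbb{Z})$. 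I would then set $\gamma_{n,r}(x):=\sigma_r\cdot\gamma_n(x)\in THH_{n+2r-1}(A)$; naturality of the trace and of the module action makes each $\gamma_{n,r}$ a natural transformation $K_n(-)\Rightarrow THH_{n+2r-1}(-)$.

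Non-triviality is the step I expect to be the genuine obstacle, since here a concrete computation is unavoidable, and I would extract the witnesses from the base ring itself. For $A=\mathbb{Z}$ the trace carries the class $1\in K_0(\mathbb{Z})=\mathbb{Z}$ of the rank-one free module to the unit $1\in THH_0(\mathbb{Z})=\mathbb{Z}$, so $\gamma_0\neq 0$; consequently $\gamma_{0,r}(1)=\sigma_r\cdot 1=\sigma_r$ is a generator of $THH_{2r-1}(\mathbb{Z})=\mathbb{Z}/r$, which is non-zero for every $r\geq 2$, so the construction already yields infinitely many non-trivial transformations landing in all sufficiently large odd degrees. The delicate point is to verify that the $THH(\mathbb{Z})$-module structure is genuinely natural at the level of spectral \emph{categories} (not merely algebras) and compatible with the trace, so that pairing with $\sigma_r$ really detects the image of $K$-theory rather than annihilating it; once this is secured, the remaining cases with $n\geq 1$ follow by evaluating on $\mathbb{Z}$ (or on a suitable $\mathbb{Z}$-algebra when $THH_n(\mathbb{Z})$ happens to vanish) and comparing the known groups $K_*(\mathbb{Z})$ with B\"okstedt's $THH_*(\mathbb{Z})$.
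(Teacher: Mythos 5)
Your construction of the candidate transformations is plausible as far as it goes --- the Blumberg--Mandell trace and the $THH(\mathbb{Z})$-module structure on $THH$ of an $H\mathbb{Z}$-linear category do produce natural maps $K_n(-)\Rightarrow THH_n(-)$ and $K_n(-)\Rightarrow THH_{n+2r-1}(-)$ --- but the non-triviality argument has a genuine gap for $n\geq 1$, and you have located it without closing it. Evaluating on $A=\mathbb{Z}$ only witnesses $\gamma_0$ and $\gamma_{0,r}$ with $r\geq 2$: for $n>0$ even the target $THH_n(\mathbb{Z})$ vanishes, and for $n=2s-1$ odd you would need the trace $K_{2s-1}(\mathbb{Z})\to THH_{2s-1}(\mathbb{Z})=\mathbb{Z}/s\mathbb{Z}$ to be non-zero, a hard computation you do not supply; the parenthetical ``or on a suitable $\mathbb{Z}$-algebra'' is precisely the missing content, not a remark. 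Moreover, even granting $\gamma_n\neq 0$, the shifted class $\gamma_{n,r}=\sigma_r\cdot\gamma_n$ could a priori vanish, since multiplication by $\sigma_r$ need not be injective on the image of the trace. There is also a smaller unjustified step upstream: that algebraic $K$-theory of a dg category agrees with that of the associated spectral category is asserted, not proved, and the paper never needs it.

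The paper's proof runs along a different axis that makes non-triviality automatic. By proposition~\ref{THHAdd}, $THH$ is an \emph{additive invariant}, hence descends to the additive motivator $\cm^{add}_{dg}$ of \cite{Duke}, where by \cite[17.2]{Duke} the functor $K_n(-)$ is co-represented by $\cu_a(\mathbb{Z})[n]$. The Yoneda lemma then yields an isomorphism of abelian groups
$$\mathsf{Nat}(K_n(-),THH_j(-))\;\simeq\;THH_j(\cu_a(\mathbb{Z})[n])\;\simeq\;THH_{j-n}(\mathbb{Z})\,,$$
so every non-zero element of B\"okstedt's groups ($\mathbb{Z}$ for $j=n$, $\mathbb{Z}/r\mathbb{Z}$ for $j=n+2r-1$) corresponds to a non-zero natural transformation, the witness being supplied by the co-representing object itself; no trace map and no evaluation on test algebras is needed. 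If you want to rescue your route, the cleanest fix is to import this co-representability statement: your trace-plus-$\sigma_r$ classes then define elements of $THH_{j-n}(\mathbb{Z})$, and identifying them with generators reduces you to the same Yoneda argument the paper uses.
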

When $X$ is a quasi-compact and quasi-separated scheme, the $THH$ of $X$ is equivalent \cite[1.3]{Mandell} to the topological Hochschild homology of schemes as defined by Geisser and Hesselholt in \cite{GH}. In particular, theorem~\ref{M2} allows us to use all the calculations of $THH$ developed in \cite{GH}, to infer results on algebraic $K$-theory.

Finally, we recall in chapter \ref{chapter9} Dugger-Shipley's notion of topological equivalence~\cite{DS} and show in proposition~\ref{proprational} that, when $R$ is the field of rationals numbers $\mathbb{Q}$, two dg categories are topological equivalent if and only if they are quasi-equivalent.

\section{Acknowledgements}
It is a great pleasure to thank Stefan Schwede for suggesting me this problem and Gustavo Granja for helpful conversations. I would like also to thank the Institute for Advanced Study at Princeton for excellent working conditions, where part of this work was carried out.

\section{Preliminaries}
{\bf Throughout this article we will be working (to simplify notation) over the integers $\mathbb{Z}$. However all our results are valid if we replace $\mathbb{Z}$ by any unital commutative ring $R$.}

\vspace{0.1cm}

Let $Ch$ denote the category of complexes and $Ch_{\geq 0}$ the full subcategory of positive graded complexes.  We consider homological notation (the differential decreases the degree). We denote by $s\mathbf{Ab}$ the category of simplicial abelian groups.

\begin{notation}\label{catenrich}
Let $(\cc, -\otimes_{\cc}-, {\bf 1}_{\cc})$ be a symmetric monoidal category \cite[6.1.2]{Borceaux}. We denote by $\cc \text{-}\mbox{Cat}$ the category of small categories enriched over $\cc$, see \cite[6.2.1]{Borceaux}. An object in $\cc \text{-} \mbox{Cat}$ will be called a {\em $\cc$-category} and a morphism a {\em $\cc$-functor}.
\end{notation}
\begin{notation}\label{catenrich1}
Let $\cd$ be a Quillen model category, see \cite{Quillen}. We denote by $\cw_{\cd}$ the class of weak equivalences in $\cd$ and by $[-,-]_{\cd}$ the set of morphisms in the homotopy category $\mathsf{Ho}(\cd)$ of $\cd$. Moreover, if $\cd$ is cofibrantly generated, we denote by $I_{\cd}$, resp. by $J_{\cd}$, the set of generating cofibrations, resp. generating trivial cofibrations.
\end{notation}
\begin{definition}\label{lax}
Let $(\cc, -\otimes-, {\bf 1}_{\cc})$ and $(\cd, -\wedge-, {\bf 1}_{\cd})$ be two symmetric monoidal categories. A {\em lax monoidal functor} is a functor $F:\cc \rightarrow \cd$ equipped with:
\begin{itemize}
\item[-] a morphism $\eta: {\bf 1}_{\cd} \rightarrow F({\bf 1}_{\cc})$ and 
\item[-] natural morphisms
$$ \psi_{X,Y}: F(X) \wedge F(Y) \rightarrow F(X \otimes Y),\,\,\,\, X,Y \in \cc$$
which are coherently associative and unital (see diagrams $6.27$ and $6.28$ in \cite{Borceaux}).
\end{itemize}
A lax monoidal functor is {\em strong monoidal} if the morphisms $\eta$ and $\psi_{X,Y}$ are isomorphisms.
\end{definition}
Now, suppose that we have an adjunction
$$
\xymatrix{
\cc \ar@<1ex>[d]^N \\
\cd \ar@<1ex>[u]^L\,,
}
$$
with $N$ a lax monoidal functor. Then, recall from~\cite[3.3]{SS}, that the left adjoint $L$ is endowed with a lax {\em co}monoidal structure.
$$
\begin{array}{lccc}
\gamma : & L({\bf 1}_{\cd}) & \longrightarrow &  {\bf 1}_{\cc}\\
\phi: & L(X\wedge Y) & \longrightarrow & L(X)\otimes L(Y)\,.
\end{array}
$$
\begin{definition}
A {\em weak monoidal Quillen pair} 
$$
\xymatrix{
\cc \ar@<1ex>[d]^N \\
\cd \ar@<1ex>[u]^L\,,
}
$$
between monoidal model categories $\cc$ and $\cd$ (see \cite[4.2.6]{HoveyLivro}) consists of a Quillen pair, with a lax monoidal structure on the right adjoint, such that the following two conditions hold:
\begin{itemize}
\item[(i)] for all cofibrant objects $X$ and $Y$ in $\cd$, the {\em co}monoidal map
$$ \phi: L(X\wedge Y) \longrightarrow L(X)\otimes L(Y)$$
belongs to $\cw_{\cc}$ and
\item[(ii)] for some (hence any) cofibrant resolution ${\bf 1}_{\cd}^c \stackrel{\sim}{\rightarrow} {\bf 1}_{\cd}$, the composite map
$$ L({\bf 1}_{\cd}^c) \rightarrow L({\bf 1}_{\cd}) \stackrel{\gamma}{\rightarrow} {\bf 1}_{\cc}$$
belongs to $\cw_{\cc}$.
\end{itemize}
A {\em strong monoidal Quillen pair} is a weak monoidal Quillen pair for which the {\em co}monoidal maps $\gamma$ and $\phi$ are isomorphisms.
\end{definition}
\begin{remark}
If ${\bf 1}_{\cd}$ is cofibrant and $L$ is strong monoidal, then $N$ is lax monoidal and the Quillen pair is a  strong monoidal Quillen pair.  
\end{remark}

\begin{definition}\label{smwe}
A weak (resp. strong) monoidal Quillen pair is a {\em weak monoidal Quillen equivalence} (respectively {\em strong monoidal Quillen equivalence}) if the underlying pair is a Quillen equivalence.
\end{definition}

Let $\cc$ a symmetric monoidal model category~\cite[4.2.6]{HoveyLivro}. We consider the following natural notion of equivalence in $\ccat$:
\begin{remark}\label{[]}
We have a functor
$$ [-]: \ccat \longrightarrow \mbox{Cat}\,,$$
with values in the category of small categories, obtained by base change along the monoidal composite functor
$$ \cc \longrightarrow \mathsf{Ho}(\cc) \stackrel{[{\bf 1}_{\cc},-]}{\longrightarrow} \mathbf{Set}\,.$$
\end{remark}
\begin{definition}\label{we}
A $\cc$-functor $F:\ca \rightarrow \cb$ is a {\em weak equivalence} if:
\begin{itemize}
\item[WE1)] for all objects $x,y \in \ca$, the morphism 
$$ F(x,y): \ca(x,y) \rightarrow \cb(Fx,Fy)$$
belongs to $\cw_{\cc}$ and 
\item[WE2)] the induced functor
$$ [F]: [\ca] \longrightarrow [\cb] $$
is an equivalence of categories.
\end{itemize}
\end{definition}
\begin{remark}
If condition $WE1)$ is verified, condition $WE2)$ is equivalent to:
\begin{itemize}
\item[WE2')] the induced functor
$$ [F]:[\ca] \longrightarrow [\cb]$$
is essentially surjective.
\end{itemize}
\end{remark}
\begin{definition}
Let $\ca \in \ccat$. A {\em right $\ca$-module} is a $\cc$-functor from $\ca$ to $\cc$.
\end{definition}
\begin{remark}\label{modules}
If the model category $\cc$ is cofibrantly generated, the category $\ca\text{-}\mbox{Mod}$ of right $\ca$-modules is naturally endowed with a (cofibrantly generated) Quillen model structure, see for example~\cite[11]{Hirschhorn}. We denote by $\mathsf{Ho}(\ca\text{-}\mbox{Mod})$ its homotopy triangulated category.
\end{remark}
We finish this chapter with some useful criterions:
\begin{proposition}{\cite[A.3]{Dugger}}\label{crit}
Let 
$$
\xymatrix{
\cc \ar@<1ex>[d]^U \\
\cd \ar@<1ex>[u]^F
}
$$
be an adjunction between Quillen model categories. If the right adjoint functor $U$ preserves trivial fibrations and fibrations between fibrant objects, then the adjunction $(F,U)$ is a Quillen adjunction.
\end{proposition}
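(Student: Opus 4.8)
The plan is to verify the usual characterization of a Quillen adjunction: since $U$ already preserves trivial fibrations by hypothesis, it suffices to show that $U$ preserves all fibrations, or equivalently, by transposing across the adjunction, that $F$ preserves cofibrations and trivial cofibrations. The first half is immediate. A map $i$ in $\cd$ is a cofibration precisely when it has the left lifting property against every trivial fibration, and by adjointness $Fi$ has the left lifting property against a trivial fibration $q$ of $\cc$ exactly when $i$ has it against $Uq$; since $U$ preserves trivial fibrations, $Uq$ is again a trivial fibration and the required lift exists. Hence $F$ preserves cofibrations, and the whole difficulty is concentrated in showing that $F$ preserves trivial cofibrations.

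So let $j\colon A \to B$ be a trivial cofibration in $\cd$; then $Fj$ is already a cofibration, and the point is to prove that it lies in $\cw_{\cc}$. The key reduction is that $Fj$ enjoys the left lifting property against every fibration $p$ between fibrant objects of $\cc$: for such a $p$ the map $Up$ is a fibration in $\cd$ by hypothesis, the trivial cofibration $j$ lifts against it, and transposing that lifting square across the adjunction yields the desired lift against $p$. This reduces the statement to a lemma living entirely inside $\cc$, with no further mention of the adjunction: a cofibration of $\cc$ with the left lifting property against all fibrations between fibrant objects belongs to $\cw_{\cc}$.

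I would prove this lemma in two stages. When the source and target of the cofibration $f$ are already fibrant, I would factor $f$ as a trivial cofibration $\iota$ followed by a fibration $p$; the intermediate object is then fibrant, so $p$ is a fibration between fibrant objects, and the assumed lifting property of $f$ against $p$ exhibits $f$ as a retract of $\iota$, whence $f \in \cw_{\cc}$ because weak equivalences are closed under retracts. For a general cofibration $f\colon A \to B$, I would reduce to this case: choose a fibrant replacement $A \xrightarrow{\sim} \hat A$, form the pushout of $f$ along it, and fibrantly replace that pushout. This produces a commuting square relating $f$ to a cofibration $\hat f$ between fibrant objects whose two horizontal legs are weak equivalences, so that $f \in \cw_{\cc}$ if and only if $\hat f \in \cw_{\cc}$ by two-out-of-three.

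The main obstacle is to check that this replacement $\hat f$ still has the left lifting property against all fibrations between fibrant objects, so that the fibrant case can be applied to it. This is the heart of the argument: one verifies that the pushout of $f$ inherits the lifting property, since a lifting square against such a fibration $p$ restricts, through the universal property of the pushout, to a lifting square for $f$ itself, which is solvable by assumption; and one uses that the left lifting property against a fixed $p$ is stable under composition, so that composing with the trivial cofibration of the fibrant replacement does no harm. Granting this, the fibrant case forces $\hat f \in \cw_{\cc}$, hence $f \in \cw_{\cc}$, hence $Fj \in \cw_{\cc}$; applying this to every trivial cofibration $j$ shows that $F$ preserves trivial cofibrations, which completes the proof that $(F,U)$ is a Quillen adjunction.
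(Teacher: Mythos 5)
Your argument is correct and complete; the key lemma you isolate (a cofibration with the left lifting property against all fibrations between fibrant objects is a weak equivalence, proved by the retract trick in the fibrant case and by pushout along a fibrant replacement in general) is exactly the content of the cited result. The paper itself gives no proof of this proposition --- it simply invokes \cite[A.3]{Dugger} --- and your argument is essentially the one in that reference, so there is nothing to reconcile.
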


\begin{proposition}{\cite[1.3.16]{HoveyLivro}}\label{Ncrit}
Let 
$$
\xymatrix{
\cc \ar@<1ex>[d]^U \\
\cd \ar@<1ex>[u]^F
}
$$
be a Quillen adjunction between model categories. The adjunction $(F,U)$ is a Quillen equivalence if and only if:
\begin{itemize}
\item[a)] the right adjoint $U$ reflects weak equivalences between fibrant objects and
\item[b)] for every cofibrant object $X \in \cd$, the composed morphism
$$
\xymatrix{
X \ar[dr] \ar[rr]^{\sim} && UF(X)_f \\
& UF(X) \ar[ur]_{U(i)} & \,,
}
$$
is a weak equivalence in $\cd$, where $i:F(X) \rightarrow F(X)_f$ is a fibrant resolution in $\cc$.
\end{itemize}
\end{proposition}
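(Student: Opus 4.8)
The plan is to argue straight from the definition of a Quillen equivalence: the Quillen pair $(F,U)$ is a Quillen equivalence precisely when, for every cofibrant $X \in \cd$ and every fibrant $Y \in \cc$, a map $f:F(X)\to Y$ lies in $\cw_{\cc}$ if and only if its adjunct $f^{\flat}:X\to U(Y)$ lies in $\cw_{\cd}$. The object that links this characterization to conditions (a) and (b) is the \emph{derived unit}: writing $\eta$ for the unit of the adjunction, the adjunct of the fibrant resolution $i:F(X)\to F(X)_f$ is exactly the composite $X \stackrel{\eta_X}{\longrightarrow} UF(X) \stackrel{U(i)}{\longrightarrow} U(F(X)_f)$ appearing in (b). Thus (b) asserts precisely that the derived unit is a weak equivalence for cofibrant $X$. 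Before starting I would record one standard fact (Ken Brown's lemma): since $U$ is right Quillen it sends weak equivalences between fibrant objects to weak equivalences, so when combined with (a) we get, for any map $g$ between fibrant objects of $\cc$, the two-sided statement $g\in\cw_{\cc}\Leftrightarrow U(g)\in\cw_{\cd}$.

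For the substantive implication I would assume (a) and (b) and fix cofibrant $X$, fibrant $Y$, and a map $f:F(X)\to Y$. Note first that $F(X)$ is cofibrant, since left Quillen functors preserve cofibrant objects. As $i:F(X)\to F(X)_f$ is a trivial cofibration and $Y$ is fibrant, a lift produces $g:F(X)_f\to Y$ with $g\circ i=f$; because $i\in\cw_{\cc}$, two-out-of-three gives $f\in\cw_{\cc}\Leftrightarrow g\in\cw_{\cc}$, and since $F(X)_f$ and $Y$ are both fibrant the recorded fact gives $g\in\cw_{\cc}\Leftrightarrow U(g)\in\cw_{\cd}$. On the other side, naturality of the adjunction yields $f^{\flat}=U(g)\circ\bigl(U(i)\circ\eta_X\bigr)$, i.e. $f^{\flat}$ is $U(g)$ precomposed with the derived unit, which is a weak equivalence by (b); hence $f^{\flat}\in\cw_{\cd}\Leftrightarrow U(g)\in\cw_{\cd}$. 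Chaining the three equivalences gives $f\in\cw_{\cc}\Leftrightarrow f^{\flat}\in\cw_{\cd}$, which is exactly the Quillen equivalence condition.

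For the converse I would assume $(F,U)$ is a Quillen equivalence. Condition (b) is immediate: for cofibrant $X$ the map $i:F(X)\to F(X)_f$ is a weak equivalence with cofibrant source and fibrant target, so by the defining property its adjunct, the derived unit, is a weak equivalence. Condition (a) follows from the fact that a Quillen equivalence induces an adjoint equivalence $(\mathsf{L}F,\mathsf{R}U)$ of homotopy categories, so that $\mathsf{R}U$ is conservative: if $g:Y\to Y'$ is a map of fibrant objects with $U(g)\in\cw_{\cd}$, then $\mathsf{R}U(g)$ is an isomorphism in $\mathsf{Ho}(\cd)$, whence $g$ is an isomorphism in $\mathsf{Ho}(\cc)$ and therefore lies in $\cw_{\cc}$.

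The main difficulty here is organizational rather than conceptual: one must keep the adjunction bookkeeping straight so that the derived unit is correctly recognized as the adjunct of $i$, and must apply each two-out-of-three step only to maps whose endpoints carry the (co)fibrancy needed for (a) and Ken Brown's lemma. The single genuinely non-formal input is Ken Brown's lemma itself, which is what upgrades (a) from a one-directional reflection statement to the two-sided equivalence $g\in\cw_{\cc}\Leftrightarrow U(g)\in\cw_{\cd}$ on fibrant objects; without it the reverse implication would not close up.
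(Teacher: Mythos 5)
The paper offers no proof of this proposition: it is imported verbatim as a citation to Hovey's book (Corollary 1.3.16 there), so there is no in-paper argument to compare against. Your proof is correct. It works directly from the definition of a Quillen equivalence ($f:F(X)\to Y$ is a weak equivalence iff its adjunct $f^{\flat}:X\to U(Y)$ is, for $X$ cofibrant and $Y$ fibrant), whereas Hovey's own route passes through the total derived adjunction and the derived unit/counit criterion of his Proposition 1.3.13; your version is the more elementary of the two and avoids any appeal to the machinery of derived functors except in the converse for (a), where you do invoke conservativity of $\mathsf{R}U$ together with saturation of the model structure (weak equivalences are exactly the maps inverted in $\mathsf{Ho}(\cc)$) --- a nontrivial but standard fact that deserves explicit mention since it is the only non-formal input besides Ken Brown's lemma. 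Two bookkeeping points you rely on are worth making explicit: the lifting step producing $g$ with $g\circ i=f$ uses that the fibrant resolution $i$ is a trivial cofibration (the standard convention, obtained by factoring $F(X)\to\ast$), not merely a weak equivalence to a fibrant object; and the identification of the composite in (b) with the adjunct of $i$ is exactly the triangle identity $i^{\flat}=U(i)\circ\eta_X$, which you state correctly. With those caveats recorded, every two-out-of-three application is made between maps whose endpoints have the required (co)fibrancy, and the chain of equivalences closes up as claimed.
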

Throughout this article the adjunctions are displayed vertically with the left, resp. right, adjoint on the left side, resp. right side.
\section{Homotopy theory of general Spectral categories}\label{HomSpc}
Hovey developed in \cite{Hovey} the homotopy theory of symmetric spectra in general Quillen model categories. Starting from a `well-behaved' monoidal model category $\cc$ and a cofibrant object $K \in \cc$, Hovey constructed the stable monoidal model category $\spc$ of $K$-symmetric spectra over $\cc$, see \cite[7.2]{Hovey}.

In this chapter, we construct the homotopy theory of {\em general Spectral categories}, i.e. small categories enriched over $\mathsf{Sp}^{\Sigma}(\cc,K)$ (see theorem~\ref{stable}). For this, we impose the existence of a `well-behaved' Quillen model structure on $\ccat$ and certain conditions on $\cc$ and $K$.

Let $(\cc, -\otimes_{\cc}-, {\bf 1}_{\cc})$ be a cofibrantly generated monoidal model category and $K$ an object of $\cc$.
\subsection{Conditions on $\cc$ and $K$:}
\begin{itemize}
\item[C1)] The model structure on $\cc$ is proper (\cite[13.1.1]{Hirschhorn}) and cellular (\cite[12.1.1]{Hirschhorn}).
\item[C2)] The domains of the morphisms of the sets $I_{\cc}$ (generating cofibrations) and $J_{\cc}$ (generating trivial cofibrations) are cofibrant and sequentially small.
\item[C3)] The model structure on $\cc$ satisfies the monoid axiom (\cite[3.3]{SS}).
\item[C4)] There exists a product preserving functor $|-|:\cc \rightarrow \mathbf{Set}$ such that a morphism $f$ belongs to $\cw_{\cc}$ (see \ref{catenrich1}) if and only if $|f|$ is an isomorphism in $\mathbf{Set}$.
\item[C5)] The object $K \in \cc$ is cofibrant and sequentially small.
\item[C6)] The stable model structure on $\spc$ (see~\cite[8.7]{Hovey}) is right proper and satisfies the monoid axiom.
\item[C7)] The identity ${\bf 1}_{\cc}$ is a domain or codomain of an element of $I_{\cc}$.
\end{itemize}
\begin{remark}\label{rk1}
\begin{itemize}
\item[-] Notice that since $\cc$ is right proper, so it is the level model structure on $\spc$ (see~\cite[8.3]{Hovey}).
\item[-] Notice also, that since the evaluation functors (\cite[7.3]{Hovey})
$$ Ev_n: \spc \longrightarrow \cc \,,\,\,\, n\geq 0$$
preserve filtered colimits, the domains of the generating (trivial) cofibrations of the level model structure on $\spc$ are also sequentially small.
\item[-] Observe that the functor $|-|$ admits a natural extension:
$$ 
\begin{array}{rcl}
|-|_T: \spc & \longrightarrow &\underset{n \geq 0}{\prod} \mathbf{Set} \\
(X_n)_{n \geq 0} & \mapsto & (|X_n|)_{n \geq 0}\,.
\end{array}
$$
In this way a morphism $f$ in $\spc$ is a level equivalence if and only if $|f|_T$ is an isomorphism.
\end{itemize}
\end{remark}
\begin{example}\label{exampleC}
\begin{itemize}
\item[1)] If we consider for $\cc$ the projective model structure on the category $Ch$ (see \cite[2.3.11]{HoveyLivro}) and for $K$ the chain complex $\mathbb{Z}[1]$, which contains a single copy of $\mathbb{Z}$ in dimension $1$, then conditions $C1)\text{-}C7)$ are satisfied: conditions $C1)$, $C2)$, $C5)$ and $C7)$ are verified by construction. For condition $C4)$, consider the functor, which associates to a chain complex $M$ the set $\underset{n \in \mathbb{Z}}{\prod}H_n(M)$. Finally condition $C3)$ is proved in \cite[3.1]{Shipley} and condition $C6)$ is proved in \cite[2.9]{Shipley}.

\item[2)] If we consider for $\cc$ the projective model structure on $Ch_{\geq 0}$ (see \cite[III]{Jardine}) and for $K$ the chain complex $\mathbb{Z}[1]$, then as in the previous example, conditions $C1)\text{-}C7)$ are satisfied.

\item[3)] If we consider for $\cc$ the model structure on $s\mathbf{Ab}$ defined in \cite[III-2.8]{Jardine} and for $K$ the simplicial abelian group $\widetilde{\mathbb{Z}}(S^1)$, where $S^1=\Delta[1]/\partial \Delta[1]$ and $\widetilde{\mathbb{Z}}(S^1)_n$ is the free abelian group on the non-basepoint $n$-simpleces in $S^1$, then conditions $C1)\text{-}C7)$ are satisfied: conditions $C1)$, $C2)$, $C5)$ and $C7)$ are verified by construction. For condition $C4)$, consider the functor which associates to a simplicial abelian group $M$, the set $\underset{n \in \mathbb{N}}{\prod} \pi_n(M)$. Finally conditions $C3)$ and $C6)$ are proved in \cite[3.4]{Shipley}.
\end{itemize}
\end{example}
\subsection{Conditions on $\ccat$:}
We impose the {\em existence} of a (fixed) set $J''$ of $\cc$-functors, such that: the category $\ccat$ carries a cofibrantly generated Quillen model, whose weak equivalences are those defined in \ref{we} and the sets of generating (trivial) cofibrations are as follows:
\begin{definition}
The set $I$ of {\em generating cofibrations} on $\ccat$ consist of:
\begin{itemize}
\item[-] the $\cc$-functors obtained by applying the functor $U$ (see \ref{funcU}) to the set $I_{\cc}$ of generating cofibrations on $\cc$ and
\item[-] the $\cc$-functor
$$ \emptyset \longrightarrow \underline{{\bf 1}_{\cc}} $$
from the empty $\cc$-category $\emptyset$ (which is the initial object in $\cc\text{-}\mbox{Cat}$) to the $\cc$-category $\underline{{\bf 1}_{\cc}}$ with one object $\ast$ and endomorphism ring ${\bf 1}_{\cc}$.
\end{itemize}
\end{definition}

\begin{definition}\label{gencofCcat}
The set $J:=J'\cup J''$ of {\em generating trivial cofibrations} on $\ccat$ consist of:
\begin{itemize}
\item[-] the set $J'$ of $\cc$-functors obtained by applying the functor $U$ to the set $J_{\cc}$ of generating trivial cofibrations on $\cc$ and
\item[-] the fixed set $J''$ of $\cc$-functors.
\end{itemize}
\end{definition}
Moreover, we {\em impose} that:
\begin{itemize}
\item[H1)] the $\cc$-functors in $J''$ are of the form $ \cg \stackrel{V}{\longrightarrow} \cd $, with $\cg$ a sequentially small $\cc$-category with only one object $\{1\}$ and $\cd$ a $\cc$-category with two objects $\{1,2\}$, such that the map $\xymatrix{*+<1pc>{\cg(1)} \ar@{>->}[r]^{V(1)}_{\sim} & \cd(1)}$ is a trivial cofibration in $\cc^{\{1\}}\text{-}\mbox{Cat}$, see remark~\ref{nova1}.

\item[H2)] for every $\cc$-category $\ca$, the (solid) diagram
$$
\xymatrix{
\cb_1 \ar[r] \ar[d]_{P''} & \ca \\
\cb_2 \ar@{-->}[ur]_{\phi}
}
$$
admits a `lift' $\phi$, where $P''$ is a $\cc$-functor in $J''$.
\item[H3)] the Quillen model structure obtained is right proper.
\end{itemize}
\begin{remark}\label{fib}
The class $I\text{-}\mbox{inj}$, i.e. the class of trivial fibrations on $\ccat$ consist of the $\cc$-functors $F:\ca \rightarrow \cb$ such that:
\begin{itemize}
\item[-] for all objects $x,y \in \ca$, the morphism 
$$ F(x,y): \ca(x,y) \rightarrow \cb(Fx,Fy)$$
is a trivial fibration in $\cc$ and 
\item[-] the $\cc$-functor $F$ induces a surjective map on objects.
\end{itemize}

Notice also that the class $J\text{-}\mbox{inj}$, i.e. the class of fibrations on $\ccat$ consist of the $\cc$-functors $F:\ca \rightarrow \cb$ such that:
\begin{itemize} 
\item[-] for all objects $x,y \in \ca$, the morphism 
$$ F(x,y): \ca(x,y) \rightarrow \cb(Fx,Fy)$$
is a fibration in $\cc$ and 
\item[-] have the right lifting property (=R.L.P.) with respect to the set $J''$.
\end{itemize}
In particular, by condition $H2)$, a $\cc$-category $\ca$ is fibrant if and only if for all objects $x,y \in \ca$, the object $\ca(x,y)$ is fibrant in $\cc$.
\end{remark}

\begin{example}\label{exampleD}
\begin{itemize}
\item[1)] If in the first example of~\ref{exampleC}, we consider for $\ccat$ the Quillen model structure of theorem~\cite[1.8]{These}, where the set $J''$ consists of the dg functor $F:\ca \rightarrow \mathcal{K}$ (see section $1.3$ of \cite{These}), then conditions $H1)\text{-}H3)$ are satisfied: condition $H1)$ is satisfied by construction and conditions $H2)$ and $H3)$ follow from remark~\cite[1.14]{These} and corollary~\cite[13.1.3]{Hirschhorn}.

\item[2)] If in the second example of~\ref{exampleC}, we consider for $\ccat$ the Quillen model structure of theorem~\cite[4.7]{DGScat}, where the set $J''$ consists of the dg functor $F :\ca \rightarrow \mathcal{K}$, then conditions $H1)\text{-}H3)$ are satisfied: condition $H1)$ is also satisfied by construction (see section $4$ of \cite{DGScat}) and conditions $H2)$ and $H3)$ are analogous to the previous example.

\item[3)] If in the third example of~\ref{exampleC}, we consider for $\ccat$ the Quillen model structure of theorem~\cite[5.10]{DGScat}, where the set $J''$ consists of the simplicial abelian functor $L(F):L(\ca) \rightarrow L(\mathcal{K})$ (see section $5.2$ of \cite{DGScat}), then conditions $H1)\text{-}H3)$ are satisfied: since the normalization functor (see section $5.2$ of~\cite{DGScat})
$$N:s\mathbf{Ab}\text{-}\mbox{Cat} \longrightarrow \mathsf{dgcat}_{\geq 0}$$
preserves filtered colimits, admits a left Quillen adjoint and every object is fibrant in $s\mathbf{Ab}\text{-}\mbox{Cat}$ (see remark \cite[5.18]{DGScat}) conditions $H1)$-$H3)$ are satisfied.
\end{itemize}
\end{example}
From now on until the end of this chapter, we suppose that $\cc$, $K$ and $\ccat$ satisfy conditions $C1)\text{-}C7)$ and $H1)\text{-}H3)$.
\subsection{Levelwise quasi-equivalences}
In this section we consider the level model structure on $\spc$, see \cite[8.3]{Hovey}. Recall from \cite[7.2]{Hovey} that the category $\spc$ of $K$-symmetric spectra over $\cc$ is endowed with a symmetric monoidal structure $-\wedge-$ whose identity is the $K$-spectrum
$$ {\bf 1}_{\spc}=({\bf 1}_{\cc}, K, K\otimes K, \ldots, K^{\otimes^n}, \ldots )\,,$$
where the permutation group $\Sigma_n$ acts on $K^{\otimes^n}$ by permutation.
Moreover, as it is shown in \cite[7.3]{Hovey}, we have an adjunction
$$
\xymatrix{
\spc \ar@<1ex>[d]^{Ev_0} \\
\cc \ar@<1ex>[u]^{F_0} \,,
}
$$
where both adjoints are strong monoidal (\ref{lax}). This naturally induces, by remark~\ref{corleftadj}, the following adjunction
$$
\xymatrix{
\spcc \ar@<1ex>[d]^{Ev_0} \\
\cc\text{-}\mbox{Cat} \ar@<1ex>[u]^{F_0} \,.
}
$$
\begin{remark}\label{rk2}
By remark~\ref{rk1}, the right adjoint functor $Ev_0$ preserves filtered colimits.
\end{remark}
We now construct a Quillen model structure on $\spcc$, whose weak equivalence are defined as follows.
\begin{definition}\label{leveleq}
A $\spc$-functor $F:\ca \rightarrow \cb$ is a {\em levelwise quasi-equivalence} if:
\begin{itemize}
\item[L1)] for all objects $x, y \in \ca$, the morphism
$$ F(x,y):\ca(x,y) \rightarrow \cb(Fx,Fy)$$
is a level equivalence in $\spc$ (see \cite[8.1]{Hovey}) and
\item[L2)] the induced $\cc$-functor
$$ Ev_0(F):Ev_0(\ca) \rightarrow Ev_0(\cb)$$
is a weak equivalence (\ref{we}) in $\ccat$.
\end{itemize}
\end{definition}
\begin{notation}
We denote by $\cw_L$ the class of levelwise quasi-equivalences. 
\end{notation}
\begin{remark}
If condition $L1)$ is verified, condition $L2)$ is equivalent to:
\begin{itemize}
\item[L2')] the induced functor (\ref{[]})
$$ [Ev_0(F)]: [Ev_0(\ca)] \longrightarrow [Ev_0(\cb)]$$
is essentially surjective.
\end{itemize}
\end{remark}
We now define our sets of generating (trivial) cofibrations. 
\begin{definition}\label{defc}
The set $I_L$ of {\em generating cofibrations} in $\spcc$ consists of:
\begin{itemize}
\item[-] the set $I'_L$ of $\spc$-functors obtained by applying the functor $U$ (see \ref{funcU}) to the set $I_{\spc}$ of generating cofibrations for the level model structure on $\spc$ (see \cite[8.2]{Hovey}) and 
\item[-] the $\spc$-functor
$$\emptyset \longrightarrow \underline{{\bf 1}_{\spc}}$$
from the empty $\spc$-category $\emptyset$ (which is the initial object in $\spcc$) to the $\spc$-category $\underline{{\bf 1}_{\spc}}$ with one object $\ast$ and endomorphism ring ${\bf 1}_{\spc}$.
\end{itemize}
\end{definition}

\begin{definition}\label{defa}
The set $J_L:=J'_L\cup J''_L$ of {\em generating trivial cofibrations} in $\spcc$ consists of:
\begin{itemize}
\item[-] the set $J'_L$ of $\spc$-functors obtained by applying the functor $U$ to the set $J_{\spc}$ of trivial generating cofibrations for the level model structure on $\spc$ and 
\item[-] the set $J''_L$ of $\spc$-functors obtained by applying the functor $F_0$ to the set $J''$ of trivial generating cofibrations (\ref{gencofCcat}) in $\ccat$. 
\end{itemize}
\end{definition}
\begin{theorem}\label{levelwise}
If we let $\cm$ be the category $\spcc$, $W$ be the class $\cw_L$, $I$ be the set $I_L$ of definition~\ref{defc} and $J$ the set $J_L$ of definition~\ref{defa}, then the conditions of the recognition theorem \cite[2.1.19]{Hovey} are satisfied. Thus, the category $\spcc$ admits a cofibrantly generated Quillen model structure whose weak equivalences are the levelwise quasi-equivalences.
\end{theorem}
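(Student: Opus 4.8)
The plan is to verify directly the hypotheses of the recognition theorem \cite[2.1.19]{Hovey} for the data $\cm = \spcc$, $W = \cw_L$, $I = I_L$ and $J = J_L$. First, $\spcc$ is complete and cocomplete, since $\spc$ is a cocomplete closed symmetric monoidal category and the category of small categories enriched over such a base is automatically bicomplete. The class $\cw_L$ is evidently closed under retracts and satisfies the two-out-of-three property: condition $L1)$ is inherited from the level equivalences in $\spc$, which enjoy both properties, and condition $L2)$ (equivalently $L2')$) is inherited from the weak equivalences of $\ccat$ through the functor $Ev_0$. The smallness requirements are then checked using remarks~\ref{rk1} and~\ref{rk2}: the domains of $I_{\spc}$ and $J_{\spc}$ for the level model structure are sequentially small, the domains occurring in $J''$ are sequentially small by $H1)$, and the functors $U$, $F_0$ and $Ev_0$ all preserve filtered colimits; hence the domains of $I_L$ and $J_L$ stay small relative to $I_L$-cell and $J_L$-cell.

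Next I would identify the injectivity classes. Exactly as in remark~\ref{fib}, an $\spc$-functor $F\colon\ca \to \cb$ lies in $I_L$-inj if and only if each $F(x,y)\colon\ca(x,y)\to\cb(Fx,Fy)$ is a level trivial fibration in $\spc$ and $F$ is surjective on objects. From this description the inclusion $I_L\text{-inj}\subseteq \cw_L\cap J_L\text{-inj}$ follows: such an $F$ satisfies $L1)$ because level trivial fibrations are level equivalences, and it satisfies $L2)$ because $Ev_0$ preserves trivial fibrations and surjections on objects, so that $Ev_0(F)$ is a trivial fibration and hence a weak equivalence in $\ccat$; the remaining inclusion $I_L\text{-inj}\subseteq J_L\text{-inj}$ is then a formal consequence of $J_L\subseteq I_L\text{-cof}$, established below.

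The heart of the proof is the inclusion $J_L\text{-cell}\subseteq \cw_L\cap I_L\text{-cof}$. For the cofibration half I would argue that $J'_L = U(J_{\spc})\subseteq U(I_{\spc}\text{-cof})\subseteq I_L\text{-cof}$, since $J_{\spc}\subseteq I_{\spc}\text{-cof}$ for the level structure and $U$ is a left adjoint, while $J''_L = F_0(J'')\subseteq F_0(I\text{-cof})\subseteq I_L\text{-cof}$, using that $F_0$ commutes with $U$, carries the generating cofibrations of $\ccat$ into $I_L\text{-cof}$, and preserves colimits; as $I_L\text{-cof}$ is stable under pushout and transfinite composition, all of $J_L\text{-cell}$ lands in $I_L\text{-cof}$. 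The genuinely delicate point is $J_L\text{-cell}\subseteq \cw_L$, where the two families behave differently. A pushout of a map in $J'_L$ leaves the object set unchanged and modifies the Hom-objects by a cell attachment built from a level trivial cofibration, so $L2)$ holds automatically and $L1)$ follows from a monoid-axiom argument at the level model structure (ultimately coming from $C3)$ for $\cc$); a pushout of a map $F_0(V)$ in $J''_L$, with $V$ as in $H1)$, adjoins a new object whose Hom-objects are assembled from level trivial cofibrations, and by the very shape of $V$ this object becomes isomorphic in $[Ev_0(-)]$ to an already present one, so $L1)$ is preserved and $L2')$ continues to hold. Interleaving the two types along a transfinite composition, and invoking the detection functor $|-|_T$ of remark~\ref{rk1}, which preserves filtered colimits and thereby keeps level equivalences stable under the relevant colimits, yields $J_L\text{-cell}\subseteq\cw_L$. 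I expect this step --- reconciling the Hom-wise monoid-axiom argument with the object-adjoining behaviour of $J''_L$ across a transfinite cell presentation --- to be the main obstacle.

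Finally, for the acyclicity condition I would verify the inclusion $\cw_L\cap J_L\text{-inj}\subseteq I_L\text{-inj}$. Given an $F$ that is simultaneously a fibration (in $J_L\text{-inj}$, hence Hom-wise a level fibration with the right lifting property against $J''_L$) and a levelwise quasi-equivalence, condition $L1)$ upgrades the Hom-wise level fibrations to level trivial fibrations, while condition $L2)$ combined with the lifting property against $J''_L$ (of the same nature as $H2)$) forces $F$ to be surjective on objects; by the description of $I_L\text{-inj}$ this gives $F\in I_L\text{-inj}$. With all the hypotheses of \cite[2.1.19]{Hovey} in place, the cofibrantly generated model structure with weak equivalences $\cw_L$ follows. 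Throughout, the argument closely parallels the construction of the model structure on $\ccat$ encoded in $H1)$--$H3)$, transported one level up through the monoidal level model structure on $\spc$ in the spirit of Shipley's work.
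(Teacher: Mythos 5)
Your proposal is correct and follows essentially the same route as the paper: it verifies the recognition theorem by identifying $I_L\text{-}\mbox{inj}$ with the class of Hom-wise level trivial fibrations that are surjective on objects, establishing $J_L\text{-}\mbox{inj}\cap\cw_L=I_L\text{-}\mbox{inj}$ from that description, and proving $J_L\text{-}\mbox{cell}\subseteq\cw_L$ by treating $J'_L$ via the monoid-axiom pushout argument and $J''_L$ via the object-adjoining argument with essential surjectivity of $[V]$. The only substantive point you defer --- that a pushout along $F_0(V)$ preserves $L1)$ --- is exactly what the paper's proposition~\ref{cell1} supplies, by factoring $F_0(V)$ through the full subcategory $F_0(\cd)(1)$ and splitting the pushout into a fixed-object-set trivial cofibration followed by a fully faithful inclusion; your sketch is consistent with that argument.
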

\subsection{Proof of Theorem \ref{levelwise}}
Observe that the category $\spcc$ is complete and cocomplete and that the class $\cw_L$ satisfies the two out of three axiom and is stable under retracts. Since the domains of the generating (trivial) cofibrations in $\spc$ are sequentially small (see~\ref{rk1}) the same holds by remark~\ref{rkA}, for the domains of $\spc$-functors in the sets $I'_L$ and $J'_L$. By remark~\ref{rk2}, this also holds for the remaining $\spc$-functors of the sets $I_L$ and $J_L$. This implies that the first three conditions of the recognition theorem \cite[2.1.19]{Hovey} are verified.

We now prove that $J_L\text{-}\mbox{inj} \cap \cw_L = I_L\text{-}\mbox{inj}$. For this we introduce the following auxiliary class of $\spc$-functors:
\begin{definition}
Let $\mathbf{Surj}$ be the class of $\spc$-functors $F:\ca \rightarrow \cb$ such that:
\begin{itemize}
\item[Sj1)] for all objects $x, y \in \ca$, the morphism 
$$F(x,y): \ca(x,y) \rightarrow \cb(Fx,Fy)$$
is a trivial fibration in $\spc$ and
\item[Sj2)] the $\spc$-functor $F$ induces a surjective map on objects.
\end{itemize}
\end{definition}

\begin{lemma}\label{Iinj}
$I_L\text{-}\mbox{inj} = \mathbf{Surj}\,.$
\end{lemma}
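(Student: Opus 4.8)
The plan is to prove the set equality $I_L\text{-}\mbox{inj} = \mathbf{Surj}$ by a standard adjunction-and-lifting argument, establishing the two inclusions separately. The set $I_L$ has two kinds of generators: the $\spc$-functors $U(i)$ for $i \in I_{\spc}$, and the single functor $\emptyset \to \underline{{\bf 1}_{\spc}}$. The key observation is that having the right lifting property (R.L.P.) with respect to each kind of generator translates, respectively, into conditions $Sj1)$ and $Sj2)$ in the definition of $\mathbf{Surj}$. So the heart of the proof is to decode each lifting property.

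First I would handle the $U(i)$ generators. Since $U$ is left adjoint to the forgetful/Hom functor (this is the functor from \ref{funcU} used to build the generating sets), the adjunction converts a lifting problem against $U(i)$ in $\spcc$ into a lifting problem against $i$ in $\spc$, but taken in each Hom-object $\ca(x,y) \to \cb(Fx,Fy)$. Concretely, a $\spc$-functor $F:\ca\to\cb$ has the R.L.P. with respect to all $U(i)$, $i \in I_{\spc}$, if and only if each morphism $F(x,y):\ca(x,y)\to\cb(Fx,Fy)$ has the R.L.P. with respect to all $i \in I_{\spc}$; by the recognition of trivial fibrations for the level model structure (that is, $I_{\spc}\text{-}\mbox{inj}$ equals the trivial fibrations, \cite[8.2]{Hovey}), this is exactly condition $Sj1)$. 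I expect this step to require care only in verifying that $U$ genuinely reduces the enriched lifting problem to a Hom-wise one; this is where the precise definition of $U$ in \ref{funcU} must be invoked, and it is the technical crux.

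Second I would treat the generator $\emptyset \to \underline{{\bf 1}_{\spc}}$. A lifting problem for $F:\ca\to\cb$ against this functor consists of a map $\underline{{\bf 1}_{\spc}}\to\cb$ (equivalently, a choice of object of $\cb$) together with the unique map $\emptyset\to\ca$; a lift is precisely a choice of object of $\ca$ mapping to the chosen object of $\cb$. Thus $F$ has the R.L.P. against $\emptyset\to\underline{{\bf 1}_{\spc}}$ if and only if $F$ is surjective on objects, which is condition $Sj2)$. Combining the two decodings, $F \in I_L\text{-}\mbox{inj}$ if and only if both $Sj1)$ and $Sj2)$ hold, i.e. $F \in \mathbf{Surj}$, proving both inclusions at once.

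The main obstacle will be the first step: cleanly translating the enriched lifting property against $U(i)$ into a Hom-objectwise lifting property in $\spc$. One must check that a lift in $\spcc$ is forced to be the identity on objects on the relevant subcategory and is determined entirely by a lift of the underlying $\spc$-morphism on the single Hom-object being probed, so that the correspondence between enriched lifts and Hom-wise lifts is a genuine bijection. Once the adjunction $\left(U, \text{Hom}\right)$ is used to strip away the enrichment, the remaining verification is the routine identification $I_{\spc}\text{-}\mbox{inj} = \{\text{level trivial fibrations}\}$ from Hovey's work, which I would cite rather than reprove.
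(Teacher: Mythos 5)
Your proposal is correct and follows essentially the same route as the paper: the paper's proof likewise splits $I_L$ into the $U(i)$ generators (handled via the corepresentability adjunction of remark~\ref{rkA}, reducing the lifting problem to condition $Sj1)$ Hom-objectwise) and the generator $\emptyset \to \underline{{\bf 1}_{\spc}}$ (which encodes surjectivity on objects, condition $Sj2)$). No substantive difference.
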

\begin{proof}
Notice that by adjunction (see remark~\ref{rkA}), a $\spc$-functor satisfies condition $Sj1)$ if and only if it has the R.L.P. with respect to the set $I'_L$ of generating cofibrations. Clearly a $\spc$-functor has the R.L.P. with respect to $\emptyset \rightarrow \underline{{\bf 1}_{\spc}}$ if and only if it satisfies condition $Sj2)$.
\end{proof}

\begin{lemma}\label{total}
$\mathbf{Surj}=J_L\text{-}\mbox{inj}\cap \cw_L\,.$
\end{lemma}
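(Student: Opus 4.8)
The plan is to establish the two inclusions separately; together with Lemma~\ref{Iinj} this yields the equality $J_L\text{-}\mbox{inj}\cap\cw_L = I_L\text{-}\mbox{inj}$ required by the recognition theorem. Throughout I will pass back and forth between $\spcc$ and $\ccat$ along the adjunction $(F_0, Ev_0)$: remark~\ref{rkA} translates lifting problems against $J'_L = U(J_{\spc})$ into levelwise lifting problems in $\spc$, and the adjunction $F_0 \dashv Ev_0$ translates lifting problems against $J''_L = F_0(J'')$ into lifting problems against $J''$ in $\ccat$. I will also use that, since the level (trivial) fibrations in $\spc$ are detected levelwise (\cite[8.3]{Hovey}), the functor $Ev_0$ carries them to (trivial) fibrations in $\cc$.

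For the inclusion $\mathbf{Surj} \subseteq J_L\text{-}\mbox{inj}\cap\cw_L$, let $F$ satisfy Sj1) and Sj2). Condition Sj1) makes each $F(x,y)$ a trivial fibration in $\spc$, hence in particular a level equivalence, so L1) holds; applying $Ev_0$ and using Sj2), the $\cc$-functor $Ev_0(F)$ is a trivial fibration on hom-objects and surjective on objects, i.e.\ a trivial fibration in $\ccat$ by remark~\ref{fib}, hence a weak equivalence in the sense of~\ref{we}, which gives L2). Thus $F \in \cw_L$. For the lifting property, Sj1) makes each $F(x,y)$ a fibration in $\spc$, so by remark~\ref{rkA} the functor $F$ has the R.L.P.\ with respect to $J'_L$; and since $Ev_0(F)$ is a trivial fibration in $\ccat$ it has the R.L.P.\ with respect to $J''$, which consists of (trivial) cofibrations of $\ccat$, whence $F$ has the R.L.P.\ with respect to $J''_L = F_0(J'')$ by adjunction. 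Therefore $F \in J_L\text{-}\mbox{inj}$.

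For the reverse inclusion, let $F \in J_L\text{-}\mbox{inj}\cap\cw_L$. Having the R.L.P.\ with respect to $J'_L$ forces each $F(x,y)$ to be a fibration in $\spc$ (remark~\ref{rkA}); combined with L1), which makes each $F(x,y)$ a level equivalence, this gives that $F(x,y)$ is a trivial fibration, i.e.\ Sj1). The delicate point is Sj2). Applying $Ev_0$ to the fibrations $F(x,y)$ shows $Ev_0(F)$ is a fibration on hom-objects, and the R.L.P.\ of $F$ with respect to $J''_L = F_0(J'')$ translates by adjunction into the R.L.P.\ of $Ev_0(F)$ with respect to $J''$. By the characterization of fibrations in remark~\ref{fib}, these two facts together show $Ev_0(F)$ is a fibration in $\ccat$. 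Since $F \in \cw_L$, condition L2) makes $Ev_0(F)$ a weak equivalence, so it is a trivial fibration in $\ccat$; by remark~\ref{fib} such a functor is surjective on objects, and as $Ev_0$ is the identity on object sets, $F$ is surjective on objects. This establishes Sj2) and hence $F \in \mathbf{Surj}$.

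The main obstacle is precisely the surjectivity-on-objects step Sj2) in the second inclusion: unlike Sj1), it cannot be read off from a single piece of $J_L$, but only emerges from combining the hom-object fibrancy coming from $J'_L$ with the R.L.P.\ against $J''$ coming from $J''_L$, and then feeding both into the explicit description of (trivial) fibrations in $\ccat$ supplied by remark~\ref{fib}. This is the one place where the specific shape of the generating trivial cofibrations $J''$ (conditions H1)--H2)) is genuinely used; the remaining verifications are formal consequences of the $(F_0,Ev_0)$-adjunction and the levelwise nature of the level model structure on $\spc$.
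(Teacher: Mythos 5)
Your proof is correct and follows essentially the same route as the paper's: both directions are handled by translating the lifting conditions against $J'_L=U(J_{\spc})$ and $J''_L=F_0(J'')$ through the adjunctions of remark~\ref{rkA} and $(F_0,Ev_0)$, and the surjectivity-on-objects step Sj2) is extracted exactly as in the paper by showing $Ev_0(F)$ is a trivial fibration in $\ccat$ and invoking remark~\ref{fib}. Your write-up is somewhat more explicit than the paper's about why $Ev_0(F)$ is a fibration in $\ccat$ (combining the hom-object fibrations from $J'_L$ with the R.L.P.\ against $J''$), but the argument is the same.
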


\begin{proof}
We prove first the inclusion $\subseteq$. Let $F:\ca \rightarrow \cb$ be a $\spc$-functor which belongs to $\mathbf{Surj}$. Clearly condition $Sj1)$ and $Sj2)$ imply conditions $L1)$ and $L2)$ and so $F$ belongs to $\cw_L$. Notice also that a $\spc$-functor which satisfies condition $Sj1)$ has the R.L.P. with respect to the set $J'_L$ of generating trivial cofibrations. It is then enough to show that $F$ has the R.L.P. with respect to the set $J''_L$ of generating trivial cofibrations.

By adjunction, this is equivalent to demand that the $\cc$-functor $Ev_0(F): Ev_0(\ca) \rightarrow Ev_0(\cb)$ has the R.L.P. with respect to the set $J''$ of generating trivial cofibrations in $\cc\text{-}\mbox{Cat}$. Since $F$ satisfies conditions $Sj1)$ and $Sj2)$, remark \ref{fib} implies that $Ev_0(F)$ is a trivial fibration in $\cc\text{-}\mbox{Cat}$ and so the claim follows.

We now prove the inclusion $\supseteq$. Start by observing that a $\spc$-functor  satisfies condition $Sj1)$ if and only if it satisfies condition $L1)$ and it has the R.L.P. with respect to the set $J'_L$ of generating trivial cofibrations. Now, let $F:\ca \rightarrow \cb$ be a $\spc$-functor which belongs to $J_L\text{-}\mbox{inj}\cap \cw_L$. It is then enough to show that it satisfies condition $Sj2)$.  Since $F$ has the R.L.P. with respect to the set $J_L$ of generating trivial cofibrations, the $\cc$-functor $Ev_0(F):Ev_0(\ca) \rightarrow Ev_0(\cb)$ has the R.L.P. with respect to the set $J$ of generating trivial cofibrations in $\cc\text{-}\mbox{Cat}$. These remarks imply that $Ev_0(F)$ is a trivial fibration in $\cc\text{-}\mbox{Cat}$ and so by remark \ref{fib}, the $\cc$-functor $Ev_0(F)$ induces a surjective map on objects. Since $Ev_0(F)$ and $F$ induce the same map on the set of objects, the $\spc$-functor $F$ satisfies condition $Sj2)$.
\end{proof}
We now characterize the class $J_L\text{-}\mbox{inj}$.
\begin{lemma}\label{fibrations}
A $\spc$-functor $F:\ca \rightarrow \cb$ has the R.L.P. with respect to the set $J_L$ of trivial generating cofibrations if and only if it satisfies:
\begin{itemize}
\item[F1)] for all objects $x,y \in \ca$, the morphism 
$$ F(x,y): \ca(x,y) \rightarrow \cb(Fx,Fy)$$
is a fibration in the level model structure on $\spc$ and
\item[F2)] the induced $\cc$-functor 
$$ Ev_0(F): Ev_0(\ca) \rightarrow Ev_0(\cb)$$
is a fibration (\ref{fib}) in the Quillen model structure on $\cc\text{-}\mbox{Cat}$.
\end{itemize}
\end{lemma}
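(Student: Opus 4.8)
The plan is to treat the two pieces of $J_L = J'_L \cup J''_L$ separately, using in each case the appropriate adjunction to transport the lifting condition into a more familiar one, and then to recombine.

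First I would handle $J'_L$, the set of $\spc$-functors obtained by applying $U$ to $J_{\spc}$. By the adjunction relating $U$ to the Hom-objects (remark~\ref{rkA}), a $\spc$-functor $F$ has the R.L.P. with respect to $J'_L$ if and only if, for all objects $x,y \in \ca$, the morphism $F(x,y): \ca(x,y) \rightarrow \cb(Fx,Fy)$ has the R.L.P. with respect to $J_{\spc}$. Since $J_{\spc}$ is the set of generating trivial cofibrations for the level model structure on $\spc$, this is exactly condition $F1)$.

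Next I would handle $J''_L = F_0(J'')$. Using the adjunction $(F_0, Ev_0)$ between $\spcc$ and $\ccat$ induced in remark~\ref{corleftadj}, $F$ has the R.L.P. with respect to $F_0(J'')$ if and only if the $\cc$-functor $Ev_0(F): Ev_0(\ca) \rightarrow Ev_0(\cb)$ has the R.L.P. with respect to the set $J''$ in $\ccat$. It then remains to relate this to condition $F2)$, i.e. to $Ev_0(F)$ being a fibration in $\ccat$. By remark~\ref{fib}, $Ev_0(F)$ is a fibration precisely when (a) each morphism $Ev_0(F)(x,y)$ is a fibration in $\cc$ and (b) $Ev_0(F)$ has the R.L.P. with respect to $J''$. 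The key observation is that condition $F1)$ already supplies (a): a fibration in the level model structure on $\spc$ is in particular a fibration at level $0$, and $Ev_0(F)(x,y)$ is nothing but the level-$0$ component of $F(x,y)$. Hence, \emph{in the presence of} $F1)$, the R.L.P. with respect to $J''_L$ is equivalent to $F2)$.

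Putting the two pieces together yields the claim: $F$ has the R.L.P. with respect to $J_L$ if and only if it satisfies both $F1)$ and $F2)$. The only slightly delicate point, and the main obstacle, is bookkeeping rather than substance: one must verify that the two adjunctions interact correctly with the construction of $\spc$-categories, so that the transposition of the R.L.P. against $F_0(J'')$ lands exactly on the R.L.P. of $Ev_0(F)$ against $J''$, and that $F1)$ delivers precisely the level-$0$ fibrancy of Hom-objects needed to upgrade the bare lifting property against $J''$ into the full fibration condition $F2)$. This compatibility is guaranteed by the fact that the adjunction $(F_0,Ev_0)$ on enriched categories was induced from the strong monoidal adjunction $(F_0,Ev_0)$ on $\cc$, so that evaluation at level $0$ is computed Hom-object-wise.
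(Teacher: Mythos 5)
Your proof is correct and follows essentially the same route as the paper: split $J_L$ into $J'_L$ and $J''_L$, identify the R.L.P.\ against $J'_L$ with condition $F1)$ via the adjunction of remark~\ref{rkA}, and transpose the R.L.P.\ against $J''_L=F_0(J'')$ through the adjunction $(F_0,Ev_0)$ to the R.L.P.\ of $Ev_0(F)$ against $J''$. If anything, you are slightly more careful than the paper, which states the conclusion directly, whereas you make explicit the (needed) observation that $F1)$ supplies the level-$0$ fibrancy of the Hom-objects required by remark~\ref{fib} to promote the bare lifting property of $Ev_0(F)$ against $J''$ to the full fibration condition $F2)$.
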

\begin{proof}
Observe that a $\spc$-functor $F$ satisfies condition $F1)$ if and only if it has the R.L.P. with respect to the set $J'_L$ of generating trivial cofibrations. By adjunction, $F$ has the R.L.P. with respect to the set $J''_L$ if and only if the $\cc$-functor $Ev_0(F)$ has the R.L.P. with respect to the set $J''$. In conclusion $F$ has the R.L.P. with respect to the set $J_L$ if and only if it satisfies conditions $F1)$ and $F2)$ altogether.
\end{proof}
\begin{lemma}\label{cell}
$J'_L\text{-}\mbox{cell} \subseteq \cw_L\,.$
\end{lemma}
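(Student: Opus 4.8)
The plan is to reduce the whole statement to condition $L1)$ and then to establish $L1)$ through the standard pushout filtration for free extensions of $\spc$-categories. First I would record that every $\spc$-functor in $J'_L$ is obtained by applying the functor $U$ (see \ref{funcU}) to a generating trivial cofibration $j\colon X \to Y$ of the level model structure on $\spc$, and that by construction $U(j)$ is the identity on its (two-element) set of objects. Since identity-on-objects $\spc$-functors are stable under pushout and transfinite composition, any $F\colon \ca \to \cb$ in $J'_L\text{-}\mbox{cell}$ is a bijection on objects; in particular so is $Ev_0(F)$. Once $L1)$ is known, $Ev_0$ applied to the level equivalence $\ca(x,y)\to\cb(x,y)$ is a weak equivalence in $\cc$ (a level equivalence has weak equivalences in each $Ev_n$), so $Ev_0(F)$ satisfies $WE1)$; being bijective on objects it satisfies $WE2')$, whence $WE2)$ by the remark following \ref{we}. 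Thus $L2)$ follows from $L1)$, and it suffices to prove that for all $x,y$ the map $\ca(x,y)\to\cb(x,y)$ is a level equivalence in $\spc$.

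To prove $L1)$ I would first treat a single pushout of one map $U(j)\colon \cg \to \cd$ along an attaching $\spc$-functor selecting objects $a,b$ of $\ca$ together with a morphism $X \to \ca(a,b)$, and invoke the explicit description of the pushout $\cb = \ca \amalg_{\cg} \cd$ in $\spcc$. Its objects are those of $\ca$, and each hom-object $\cb(x,y)$ is the colimit of a sequence $\ca(x,y)=L_0(x,y)\to L_1(x,y)\to\cdots$ whose $n$-th term assembles the ``words'' from $x$ to $y$ using the freely adjoined generator $a\to b$ exactly $n$ times, i.e. alternating smash products of hom-objects of $\ca$ with $n$ copies of $Y$ (the degenerate case $a=b$ being handled identically by loops). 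By the Schwede--Shipley filtration (compare \cite{SS}), the layer map $L_{n-1}(x,y)\to L_n(x,y)$ is a cobase change of a map obtained by smashing $j$ with hom-objects of $\ca$; consequently the composite $\ca(x,y)\to\cb(x,y)$ lies in the class generated under cobase change and transfinite composition by $\{\,j\wedge Z \mid j\in J_{\spc},\, Z\in\spc\,\}$.

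The main obstacle is then to conclude that such maps are level equivalences, and this is precisely the monoid axiom for the \emph{level} model structure on $\spc$: I would verify that this axiom holds because the level structure is built levelwise out of $\cc$ and $\cc$ satisfies the monoid axiom $C3)$, this being the level-wise analogue of $C6)$. Granting it, each hom-object map is a level equivalence, establishing $L1)$ for a single pushout. Finally, a general element of $J'_L\text{-}\mbox{cell}$ is a transfinite composition of such pushouts; since the evaluation functors $Ev_n$ preserve filtered colimits and the class $\{\,j\wedge Z\,\}\text{-}\mbox{cell}$ is closed under transfinite composition (again by the monoid axiom), the hom-object maps of the total $\spc$-functor remain level equivalences. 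Combined with bijectivity on objects this gives $F\in\cw_L$. The delicate points to get right are the bookkeeping of the pushout filtration and the verification that the monoid axiom survives passage to the level model structure on $\spc$.
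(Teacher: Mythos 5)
Your argument is correct and follows essentially the same route as the paper: reduce to a single pushout along $U(j)$ using stability of $\cw_L$ under transfinite composition, obtain condition $L1)$ from the filtration of the pushout together with the monoid axiom for the level model structure on $\spc$, and get $L2')$ for free since the functors are bijective on objects. The only difference is that you unfold the filtration argument explicitly, whereas the paper delegates it to proposition~\ref{clef} (i.e.\ \cite[B.3]{Spectral}) combined with \cite[8.3]{Hovey}, which supplies exactly the monoid axiom for the level structure that you flag as the delicate point.
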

\begin{proof}
Since the class $\cw_L$ is stable under transfinite compositions (\cite[10.2.2]{Hirschhorn}) it is enough to prove the following: consider the following pushout:
$$ 
\xymatrix{
U(X) \ar[d]_{U(j)} \ar[r]^-R \ar@{}[dr]|{\lrcorner} & \ca \ar[d]^P \\
U(Y) \ar[r] & \cb\,,
}
$$
where $j$ belongs to the set $J_{\spc}$ of generating trivial cofibrations on $\spc$. We need to show that $P$ belongs to $\cw_L$. Since the morphism $j: X \longrightarrow Y$ is a trivial cofibration in $\spc$, proposition~\ref{clef} and proposition~\cite[8.3]{Hovey} imply that the spectral functor $P$ satisfies condition $L1)$. Since $P$ induces the identity map on objects, condition $L2')$ is automatically satisfied and so $P$ belongs to $\cw_L$.
\end{proof}

\begin{proposition}\label{cell1}
$J''_L\text{-}\mbox{cell} \subseteq \cw_L\,.$
\end{proposition}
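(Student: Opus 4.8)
The plan is to follow the template of Lemma~\ref{cell}. Since $\cw_L$ is stable under transfinite composition (\cite[10.2.2]{Hirschhorn}), it suffices to prove that a single pushout
$$
\xymatrix{
F_0(\cg) \ar[d]_{F_0(V)} \ar[r]^-R \ar@{}[dr]|{\lrcorner} & \ca \ar[d]^P \\
F_0(\cd) \ar[r] & \cb \,,
}
$$
with $V:\cg \to \cd$ a $\cc$-functor in $J''$, produces a $\spc$-functor $P$ that belongs to $\cw_L$. I would verify conditions $L1)$ and $L2)$ separately.

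Condition $L2)$ is the easier one. The base evaluation functor $Ev_0:\spc \to \cc$ is strong monoidal and admits a right adjoint (\cite[7.3]{Hovey}), so the induced functor $Ev_0:\spcc \to \ccat$ also admits a right adjoint and hence preserves this pushout. Combined with the natural isomorphism $Ev_0 \circ F_0 \cong \mathrm{id}$, this identifies $Ev_0(P):Ev_0(\ca)\to Ev_0(\cb)$ with the pushout of $V$ along $\cg \stackrel{Ev_0(R)}{\longrightarrow} Ev_0(\ca)$ in $\ccat$. Since $V$ lies in $J''$, its pushout is a trivial cofibration in $\ccat$, in particular a weak equivalence in the sense of~\ref{we}; thus $L2)$ holds.

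Condition $L1)$ is the main obstacle, since one must understand the hom-objects of the pushout of $\spc$-categories when a new object is adjoined. By $H1)$, $V$ keeps the object $1$, adds a single new object $2$, and changes the endomorphism monoid of $1$ through a trivial cofibration $V(1):\cg(1)\to\cd(1)$ in $\cc^{\{1\}}\text{-}\mbox{Cat}$; consequently $\mathrm{Ob}(\cb)=\mathrm{Ob}(\ca)\sqcup\{2\}$, and for $x,y\in\ca$ I must show that $\ca(x,y)\to\cb(Px,Py)$ is a level equivalence in $\spc$. I would describe $\cb(Px,Py)$ as a colimit built from the old homs of $\ca$ together with alternating chains of morphisms that pass through the new object $2$, and then filter this colimit by the number of such passages. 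Each stage of the filtration is a pushout of a map obtained by smashing $F_0(V(1))$ with hom-spectra of $\ca$; since $V(1)$ is a trivial cofibration, $F_0(V(1))$ is a trivial cofibration in the level structure on $\spc$, and the monoid axiom together with right properness (condition $C6)$) guarantees that these pushout stages stay inside the level equivalences, exactly as Proposition~\ref{clef} and \cite[8.3]{Hovey} are used in Lemma~\ref{cell}.

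The genuinely delicate point is this bookkeeping of the adjoined-object pushout: unlike the object-preserving pushouts of Lemma~\ref{cell}, the coend describing $\cb(Px,Py)$ mixes the data of $\ca$ with the new morphisms, and one has to show the extra summands are level-acyclic. A useful simplification comes from $H2)$: by adjunction the map $\cg\to Ev_0(\ca)$ extends along $V$, which yields a retraction $r:\cb\to\ca$ with $r\circ P=\mathrm{id}_\ca$. This already exhibits each $\ca(x,y)\to\cb(Px,Py)$ as a split monomorphism in $\spc$, so $L1)$ reduces to proving that the complementary summands are level-contractible --- and it is precisely there that the trivial-cofibration hypothesis on $V(1)$ and the monoid axiom are decisive.
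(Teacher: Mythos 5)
Your reduction to a single pushout and your treatment of condition $L2)$ are fine; in fact your observation that $Ev_0$ preserves the pushout (being a left adjoint, since its right adjoint $R_0$ is lax monoidal), so that $Ev_0(P)$ is literally a pushout of the trivial cofibration $V\in J''$ in $\ccat$ and hence a weak equivalence, is a slightly slicker route than the paper's, which only uses that the object sets form a pushout in $\mathbf{Set}$ together with essential surjectivity of $[V]$ and a diagram chase.

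The gap is in $L1)$, and it is exactly at the point you flag as ``genuinely delicate'': you do not carry out the filtration, and the two shortcuts you offer do not close it. First, the filtration stages are not simply pushouts of $F_0(V(1))$ smashed with hom-spectra of $\ca$: a chain from $x$ to $y$ in the pushout may pass through the new object $2$, so the coend describing $\cb(Px,Py)$ also involves $\cd(1,2)$, $\cd(2,2)$ and $\cd(2,1)$, and showing that these contributions collapse is a separate problem from the trivial-cofibration argument (also, the monoid axiom and properness needed here are for the \emph{level} structure, via \cite[8.3]{Hovey} as in lemma~\ref{cell}, not condition $C6)$, which concerns the stable structure). Second, the retraction $r:\cb\to\ca$ you extract from $H2)$ only exhibits $\ca(x,y)\to\cb(Px,Py)$ as a split monomorphism; in a general monoidal model category $\cc$ there is no ``complementary summand'' whose contractibility one could check, so this is not a valid reduction. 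The missing idea, which the paper uses, is to factor $F_0(V)$ as $F_0(\cg)\to F_0(\cd)(1)\hookrightarrow F_0(\cd)$, where $F_0(\cd)(1)$ is the full subcategory on the old object $1$. The first pushout fixes the object set, lives in $\mathsf{Sp}^{\Sigma}(\cc,K)^{\co}\text{-}\mbox{Cat}$, and is a pushout of a coproduct of the trivial cofibration $F_0(\cg)\to F_0(\cd)(1)$ in that fixed-object-set model structure (remark~\ref{nova1}), hence a trivial cofibration there, giving $L1)$ for $P_0$. The second pushout is along a full-subcategory inclusion, and the Fritsch--Latch result \cite[5.2]{Latch} shows it embeds $\widetilde{\ca}$ as a full subcategory of $\cb$, so the homs between old objects do not change at all. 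This factorization is what disentangles the two phenomena (modifying the endomorphisms of $1$ versus adjoining the object $2$) that your single coend computation would have to handle simultaneously.
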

\begin{proof}
Since the class $\cw_L$ is stable under transfinite compositions, it is enough to prove the following: consider the following push-out
$$
\xymatrix{
F_0(\cg) \ar[d]_{F_0(V)} \ar[r]^R \ar@{}[dr]|{\lrcorner} & \ca \ar[d]^P\\
F_0(\cd) \ar[r] & \cb \,,
}
$$
where $\cg \stackrel{V}{\rightarrow} \cd$ belongs to the set $J''$ of generating trivial cofibrations in $\cc\text{-}\mbox{Cat}$. 
We need to show that $P$ belongs to $\cw_L$. We start by condition $L1)$. Factor the $\mathsf{Sp}^{\Sigma}(\cc,K)$-functor $F_0(V)$ as 
$$ F_0(\cg) \rightarrow F_0(\cd)(1) \hookrightarrow F_0(\cd)\,,$$
where $F_0(\cd)(1)$ is the full $\mathsf{Sp}^{\Sigma}(\cc, K)$-subcategory of $F_0(\cd)$ whose set of objects is $\{1\}$, see condition $H1)$. Consider the iterated pushout:
$$
\xymatrix{
*+<1pc>{F_0(\cg)} \ar@{>->}[d]^{\sim} \ar[r]^R \ar@{}[dr]|{\lrcorner} & \ca \ar[d]_{P_0} \ar@/^1pc/[dd]^P\\
F_0(\cd)(1) \ar[r] \ar@{^{(}->}[d] \ar@{}[dr]|{\lrcorner} & \widetilde{\ca} \ar[d]_{P_1} \\
F_0(\cd) \ar[r] & \cb \,.
}
$$
In the lower pushout, since $F_0(\cd)(1)$ is a full $\mathsf{Sp}^{\Sigma}(\cc, K)$-subcategory of $F_0(\cd)$, proposition~\cite[5.2]{Latch} implies that $\widetilde{\ca}$ is a full $\mathsf{Sp}^{\Sigma}(\cc,K)$-subcategory of $\cb$ and so $P_1$ satisfies condition $L1)$. In the upper pushout, by condition $H1)$ the map $\xymatrix{*+<1pc>{\cg(1)} \ar@{>->}[r]^{\sim} & \cd(1)}$ is a trivial cofibration so it is $\xymatrix{*+<1pc>{F_0(\cg)} \ar@{>->}[r]^{\sim} & F_0(\cd)(1)}$.

Now, let $\co$ denote the set of objects of $\ca$ and let $\co':= \co \backslash R(1)$. Observe that $\widetilde{\ca}$ is identified with the following pushout in $\mathsf{Sp}^{\Sigma}(\cc,K)^{\co}\text{-}\mbox{Cat}$ (see remark~\ref{nova1})
$$
\xymatrix{
\underset{\co'}{\coprod} \,F_0(\cg) \amalg F_0(\cg) \ar@{>->}[d]^{\sim} \ar[rr]^-R \ar@{}[drr]|{\lrcorner} && \ca \ar[d]^{P_0}\\
\underset{\co'}{\coprod} \,F_0(\cg) \amalg F_0(\cd)(1) \ar[rr] && \widetilde{\ca}\,.
}
$$
Since the left vertical arrow is a trivial cofibration so it is $P_0$. In particular $P_0$ satisfies condition $L_1)$ and so the composed dg functor $P$ satisfies also condition $L_1)$.

We now show that $P$ satisfies condition $L2')$. Consider the following commutative square in $\mbox{Cat}$
$$
\xymatrix{
[\cg] \ar[rrr]^-{[Ev_0(R)]} \ar[d]_{[V]} &&& [Ev_0(\ca)] \ar[d]^{[Ev_0(P)]} \\
[\cd] \ar[rrr] &&&  [Ev_0(\cb)]\,.
}
$$
We need to show that the functor $[Ev_0(P)]$ is essentially surjective.
Notice that if we restrict ourselves to the objects of each category $(\mbox{obj}(-))$ in the previous diagram, we obtain the following co-cartesien square
$$
\xymatrix{
\mbox{obj}\,[\cg] \ar[d]_{\mbox{obj}\,[V]} \ar[rrr]^{\mbox{obj}\,[Ev_0(R)]} \ar@{}[drrr]|{\lrcorner} & & & \mbox{obj}\,[Ev_0(\ca)] \ar[d]^{\mbox{obj}\,[Ev_0(P)]} \\
\mbox{obj}\,[\cd] \ar[rrr] & & &  \mbox{obj}\,[Ev_0(\cb)]
}
$$
in $\mathbf{Set}$. Since $V$ belongs to $J''$, the functor $[V]$ is essentially surjective. These facts imply, by a simple diagram chasing argument, that $[Ev_0(P)]$ is also essentially surjective and so the $\spc$-functor $P$ satisfies condition $L2')$.

In conclusion, $P$ satisfies condition $L1)$ and $L2')$ and so it belongs to $\cw_L$.
\end{proof}

We have shown that $J_L\text{-}\mbox{cell} \subseteq \cw_L$ (lemma~\ref{cell} and proposition~\ref{cell1}) and that $I_L\text{-}\mbox{inj} = J_L\text{-}\mbox{inj} \cap \cw_L$ (lemmas~\ref{Iinj} and \ref{total}). This implies that the last three conditions of the recognition theorem \cite[2.1.19]{Hovey} are satisfied. This finishes the proof of theorem~\ref{levelwise}.

\subsection{Properties I}

\begin{proposition}\label{Fibrations}
A $\spc$-functor $F:\ca \rightarrow \cb$ is a fibration with respect to the model structure of theorem~\ref{levelwise}, if and only if it satisfies conditions $F1)$ and $F2)$ of lemma~\ref{fibrations}.
\end{proposition}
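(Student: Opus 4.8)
The plan is to reduce this proposition to the general characterization of fibrations in a cofibrantly generated model category and then quote Lemma~\ref{fibrations}. Recall that in any Quillen model structure produced by the recognition theorem \cite[2.1.19]{Hovey}, the class of fibrations coincides exactly with the class $J$-inj of maps having the right lifting property with respect to the distinguished set $J$ of generating trivial cofibrations. In the situation of Theorem~\ref{levelwise} this set is $J_L$ (Definition~\ref{defa}), so I would first argue that a $\spc$-functor $F:\ca \rightarrow \cb$ is a fibration in that model structure if and only if it belongs to $J_L$-inj. The statement then follows immediately, since Lemma~\ref{fibrations} already identifies $J_L$-inj with precisely those $\spc$-functors satisfying conditions $F1)$ and $F2)$; chaining the two equivalences yields the claim.

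The only point deserving a word of justification is the equality ``fibrations $= J_L$-inj''. One inclusion is trivial: the elements of $J_L$ are trivial cofibrations, hence every fibration, having the right lifting property against all trivial cofibrations, lifts in particular against $J_L$. For the reverse inclusion one uses that, in the model structure of Theorem~\ref{levelwise}, every trivial cofibration is a retract of a relative $J_L$-cell complex (this is exactly the content established in the proof of that theorem, where $J_L\text{-}\mbox{cell} \subseteq \cw_L$ via Lemma~\ref{cell} and Proposition~\ref{cell1}, together with $I_L\text{-}\mbox{inj} = J_L\text{-}\mbox{inj}\cap \cw_L$). Consequently any $F$ in $J_L$-inj has the right lifting property against every trivial cofibration and is therefore a fibration.

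I do not expect any genuine obstacle here: all the substantive work has been carried out in the proof of Theorem~\ref{levelwise} and in Lemma~\ref{fibrations}, and the present proposition is essentially a bookkeeping corollary repackaging the lifting-property description of $J_L$-inj into the model-categorical vocabulary of fibrations. Accordingly, I would keep the proof to a single short paragraph that invokes the standard consequence of \cite[2.1.19]{Hovey} for the identification of fibrations with $J_L$-inj and then cites Lemma~\ref{fibrations} for the translation into conditions $F1)$ and $F2)$.
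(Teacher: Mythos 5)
Your proposal is correct and follows exactly the paper's route: the paper's proof is a one-line observation that by the recognition theorem \cite[2.1.19]{Hovey} the set $J_L$ is a set of generating trivial cofibrations, so fibrations coincide with $J_L$-inj, and Lemma~\ref{fibrations} finishes the argument. Your additional paragraph justifying the identification of fibrations with $J_L$-inj simply spells out what the paper leaves implicit in its citation of the recognition theorem.
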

\begin{proof}
This follows from lemma~\ref{fibrations}, since by the recognition theorem~\cite[2.1.19]{Hovey}, the set $J_L$ is a set of generating trivial cofibrations.
\end{proof}

\begin{corollary}\label{fiblev}
A $\spc$-category $\ca$ is fibrant with respect to the model structure of theorem~\ref{levelwise}, if and only if for all objects $x,y \in \cc$, the $K$-spectrum $\ca(x,y)$ is levelwise fibrant in $\spc$ (see \cite[8.2]{Hovey}). 
\end{corollary}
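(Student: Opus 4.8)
The plan is to apply Proposition~\ref{Fibrations} to the unique $\spc$-functor $\ca \to \ast$, where $\ast$ denotes the terminal object of $\spcc$ (which exists since $\spcc$ is complete). A $\spc$-category is fibrant precisely when this canonical map is a fibration, so by Proposition~\ref{Fibrations} it suffices to translate conditions $F1)$ and $F2)$ of Lemma~\ref{fibrations} into a statement about the Hom-spectra $\ca(x,y)$.

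First I would identify the relevant terminal objects. The terminal $\spc$-category $\ast$ has a single object whose endomorphism $K$-spectrum is the terminal object of $\spc$; since $Ev_0$ is a right adjoint it preserves terminal objects, and hence $Ev_0(\ast)$ is the terminal $\cc$-category. With this in hand, condition $F1)$ for the map $\ca \to \ast$ asserts that each morphism $\ca(x,y) \to \ast(Fx,Fy)$ is a fibration in the level model structure on $\spc$; as the target is the terminal $K$-spectrum, this is equivalent to $\ca(x,y)$ being fibrant in the level model structure, i.e. levelwise fibrant (each level $\ca(x,y)_n$ fibrant in $\cc$). Condition $F2)$ asserts that $Ev_0(\ca) \to Ev_0(\ast)$ is a fibration in $\ccat$, i.e. that $Ev_0(\ca)$ is a fibrant $\cc$-category; by remark~\ref{fib} this amounts to the fibrancy in $\cc$ of every $Ev_0(\ca)(x,y) = Ev_0(\ca(x,y)) = \ca(x,y)_0$.

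The key observation is then that $F1)$ already implies $F2)$: the degree-zero evaluation $\ca(x,y)_0$ is one of the levels appearing in the levelwise-fibrancy condition, so once every level of $\ca(x,y)$ is fibrant, in particular its $0$-th level is. Consequently conditions $F1)$ and $F2)$ together reduce to $F1)$ alone, which is exactly the assertion that $\ca(x,y)$ is levelwise fibrant in $\spc$ for all objects $x,y$. There is no serious obstacle in this argument; the only points requiring care are confirming that $Ev_0$ preserves terminal objects and matching the notion of a fibrant object in the level model structure with levelwise fibrancy, both of which are immediate from Hovey's construction in~\cite[8.2]{Hovey}.
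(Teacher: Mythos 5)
Your proposal is correct and follows the argument the paper intends: the corollary is stated as an immediate consequence of Proposition~\ref{Fibrations}, obtained by applying conditions $F1)$ and $F2)$ to the map $\ca \to \ast$ and noting, via remark~\ref{fib} (which rests on condition $H2)$), that $F2)$ is subsumed by $F1)$ since $Ev_0(\ca)(x,y)=\ca(x,y)_0$ is one of the levels. Nothing is missing.
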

\begin{remark}\label{Qadj1}
Notice that by proposition~\ref{Fibrations} and remark~\ref{fib} we have a Quillen adjunction
$$
\xymatrix{
\spcc \ar@<1ex>[d]^{Ev_0} \\
\cc \text{-}\mbox{Cat} \ar@<1ex>[u]^{F_0}\,. 
}
$$
\end{remark}

\begin{proposition}\label{Rproper}
The Quillen model structure on $\spcc$ of theorem~\ref{levelwise} is right proper.
\end{proposition}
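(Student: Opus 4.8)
The plan is to reduce right properness of $\spcc$ to the right properness of the two ``coordinate'' model structures that jointly control a levelwise quasi-equivalence: the level model structure on $\spc$, which is right proper by the first item of remark~\ref{rk1} (since $\cc$ is proper by $C1)$), and the Quillen model structure on $\ccat$, which is right proper by condition $H3)$. Concretely, I would start with a pullback square
$$
\xymatrix{
\cp \ar[r]^{G} \ar[d]_{Q} & \ca \ar[d]^{P} \\
\cb \ar[r]^{W} & \cd
}
$$
in $\spcc$, in which $P$ is a fibration (so, by proposition~\ref{Fibrations}, it satisfies conditions $F1)$ and $F2)$ of lemma~\ref{fibrations}) and $W \in \cw_L$. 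I must show that the base change $G$ of $W$ again lies in $\cw_L$, i.e. that it satisfies $L1)$ and $L2)$.

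The first key step is to compute the pullback $\cp=\ca\times_{\cd}\cb$ explicitly. As in any category of enriched categories, it is formed objectwise: its set of objects is $\mbox{obj}(\ca)\times_{\mbox{obj}(\cd)}\mbox{obj}(\cb)$ and, for objects $(a,b)$ and $(a',b')$, its Hom-spectrum is the pullback
$$
\cp((a,b),(a',b')) = \ca(a,a') \times_{\cd(Pa,Pa')} \cb(b,b')
$$
in $\spc$, where $Pa=Wb$ and $Pa'=Wb'$. For condition $L1)$, the map $\cp((a,b),(a',b'))\rightarrow \ca(a,a')$ induced by $G$ is exactly the base change, along the fibration $\ca(a,a')\rightarrow\cd(Pa,Pa')$ (condition $F1)$), of the map $\cb(b,b')\rightarrow\cd(Wb,Wb')$, which is a level equivalence by condition $L1)$ for $W$. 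Right properness of the level model structure on $\spc$ then yields that this projection is a level equivalence, so $G$ satisfies $L1)$.

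For condition $L2)$ I would use that $Ev_0$ is a right adjoint (remark~\ref{Qadj1}) and hence preserves pullbacks, so that $Ev_0(\cp)=Ev_0(\ca)\times_{Ev_0(\cd)}Ev_0(\cb)$ and $Ev_0(G)$ is the base change of $Ev_0(W)$ along $Ev_0(P)$ in $\ccat$. Now $Ev_0(P)$ is a fibration in $\ccat$ by condition $F2)$ and $Ev_0(W)$ is a weak equivalence in $\ccat$ by condition $L2)$ for $W$; right properness of $\ccat$ (condition $H3)$) then forces $Ev_0(G)$ to be a weak equivalence, which is precisely $L2)$ for $G$. Combining the two steps shows $G\in\cw_L$, as required.

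The routine parts are the identifications of fibrations and of weak equivalences with the conditions $F1)$--$F2)$ and $L1)$--$L2)$, already packaged in proposition~\ref{Fibrations} and definition~\ref{leveleq}. The only point that requires genuine care --- and which I regard as the main (if modest) obstacle --- is the compatibility of base change with the two structural functors: that the Hom-spectra of the enriched pullback $\cp$ are genuinely the $\spc$-pullbacks displayed above, and that $Ev_0$ preserves these pullbacks. Once this compatibility is established, right properness of $\spcc$ follows formally by transporting right properness through each of the two coordinates.
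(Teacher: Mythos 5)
Your proof is correct and follows essentially the same route as the paper's: verify $L1)$ hom-object-wise using right properness of the level model structure on $\spc$ together with $F1)$, and verify $L2)$ by applying $Ev_0$ to the pullback square and invoking right properness of $\ccat$ together with $F2)$. (The paper's own proof cites ``condition $C3)$'' for the second step, which is evidently a typo for the condition $H3)$ you correctly use.)
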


\begin{proof}
Consider the following pullback square in $\spcc$
$$
\xymatrix{
\ca \underset{\cb}{\times}\cc \ar[d] \ar[r]^-P \ar@{}[dr]|{\ulcorner} & \cc \ar@{->>}[d]^F\\
\ca \ar[r]_R^{\sim} & \cb
}
$$
with $R$ a levelwise quasi-equivalence and $F$ a fibration. We now show that $P$ is a levelwise quasi-equivalence. Notice that pullbacks in $\spcc$ are calculated on objects and on morphisms. Since the level model structure on $\spc$ is right proper (see~\ref{rk1}) and $F$ satisfies condition $F1)$, the spectral functor $P$ satisfies condition $L1)$.

Notice that if we apply the functor $Ev_0$ to the previous diagram, we obtain a pullback square in $\cc\text{-}\mbox{Cat}$
$$
\xymatrix{
Ev_0(\ca) \underset{Ev_0(\cb)}{\times} Ev_0(\cc) \ar[d] \ar[r]^-{Ev_0(P)} \ar@{}[dr]|{\ulcorner} & \cc \ar@{->>}[d]^{Ev_0(F)}\\
Ev_0(\ca) \ar[r]_{Ev_0(R)} & Ev_0(\cb)
}
$$
with $Ev_0(R)$ a weak equivalence and $Ev_0(F)$ a fibration. Now, condition $C3)$ allow us to conclude that $P$ satisfies condition $L2)$.
\end{proof}

\begin{proposition}\label{cof}
Let $\ca$ be a cofibrant $\spc$-category (in the Quillen model structure of theorem~\ref{levelwise}). Then for all objects $x,y \in \ca$, the $K$-spectrum $\ca(x,y)$ is cofibrant in the level model structure on $\spc$.
\end{proposition}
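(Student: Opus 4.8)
The plan is to reduce to the case of an $I_L$-cell complex and then argue by transfinite induction along the cell attachments, checking at each stage that all Hom-objects remain cofibrant in the level model structure on $\spc$ and that each structure map induces a level cofibration on Hom-objects. First I would use that a cofibrant $\ca$ is a retract of an $I_L$-cell complex $\ca'$ (definition~\ref{defc}). Since a retraction $\ca \to \ca' \to \ca$ is given by $\spc$-functors, for every pair $x,y \in \ca$ the $K$-spectrum $\ca(x,y)$ is a retract in $\spc$ of $\ca'(\bar x, \bar y)$, where $\bar x, \bar y$ are the images of $x,y$. As retracts of cofibrant objects are cofibrant, it suffices to treat $\ca'$.

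Writing $\ca'$ as the colimit of a $\lambda$-sequence $\emptyset = \ca'_0 \to \ca'_1 \to \cdots \to \ca'_\beta \to \cdots$ of pushouts of maps in $I_L = I'_L \cup \{\emptyset \to \underline{{\bf 1}_{\spc}}\}$, I would prove by transfinite induction that each $\ca'_\beta$ has cofibrant Hom-objects and that each map $\ca'_\beta(x,y) \to \ca'_{\beta+1}(x,y)$ is a level cofibration. The colimit case is then formal: for a fixed pair of objects (introduced at some stage $\beta_0$), the Hom-object $\ca'(x,y)$ is the filtered colimit of the $\ca'_\beta(x,y)$, $\beta \geq \beta_0$, since colimits of object-set-preserving $\spc$-functors are computed on Hom-objects and $Ev_n$ preserves filtered colimits (remark~\ref{rk2}); a transfinite composite of level cofibrations starting from a cofibrant object is again a cofibration into a cofibrant object. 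For the base and the object-adding cells the argument is easy: $\emptyset$ has no objects, so the claim is vacuous, while attaching a cell $\emptyset \to \underline{{\bf 1}_{\spc}}$ amounts to forming the coproduct with the one-object $\spc$-category $\underline{{\bf 1}_{\spc}}$, which adds a single object whose endomorphism $K$-spectrum is ${\bf 1}_{\spc}$ and whose Hom-spectra to and from the old objects are the initial object of $\spc$. The initial object is cofibrant, and ${\bf 1}_{\spc} = F_0({\bf 1}_{\cc})$ is cofibrant in the level structure because $F_0$ is left Quillen for the level model structure (its right adjoint $Ev_0$ being right Quillen) and ${\bf 1}_{\cc}$ is cofibrant by conditions $C2)$ and $C7)$. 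These cells induce coproduct-inclusion level cofibrations on Hom-objects, so they preserve the inductive hypothesis.

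The main step, and the expected obstacle, is a free cell $U(f)$ with $f : X \to Y$ in $I_{\spc}$, attached along some $R : U(X) \to \ca'_\beta$. As in the proof of lemma~\ref{cell}, such a pushout leaves the object set unchanged, and its effect on Hom-objects is governed by the explicit filtration of a free attachment (proposition~\ref{clef}): the map $\ca'_\beta(x,y) \to \ca'_{\beta+1}(x,y)$ is a transfinite composite of pushouts of maps built from iterated pushout-products of $f$, smashed on either side with the cofibrant Hom-objects $\ca'_\beta(a,b)$ of the previous stage. Since the level model structure on $\spc$ is a symmetric monoidal model category (Hovey), the pushout-product axiom guarantees that each such map is a level cofibration, using that $f$ is a cofibration, that ${\bf 1}_{\spc}$ is cofibrant, and that the relevant Hom-objects are cofibrant by the inductive hypothesis. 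Hence $\ca'_{\beta+1}(x,y)$ is cofibrant and the structure map is a level cofibration, completing the induction. The hard part is precisely to make this filtration explicit and to recognize each of its attaching maps as a pushout-product of the cofibration $f$ with cofibrant objects; once that bookkeeping is in place, cofibrancy of every Hom-object follows formally.
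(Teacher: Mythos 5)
Your proposal is correct and follows essentially the same route as the paper: reduce to an $I_L$-cell complex by a retract argument, handle the object-adjoining cell $\emptyset \to \underline{{\bf 1}_{\spc}}$ trivially, and handle a free cell $U(f)$ via the filtration/pushout-product argument showing that attaching along a cofibration preserves cofibrancy of the Hom-objects. The only remark is that the filtration you describe for the free-cell step is exactly the content of proposition~\ref{clef1} (the cofibration version), not proposition~\ref{clef} (the trivial-cofibration version), and the paper simply cites that lemma rather than re-deriving the bookkeeping.
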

\begin{proof}
The model structure of theorem~\ref{levelwise} is cofibrantly generated and so any cofibrant object in $\spcc$ is a retract of a $I_L$-cell complex, see corollary \cite[11.2.2]{Hirschhorn}. Since cofibrations are stable under filtered colimits it is enough to prove the proposition for pushouts along a generating cofibration. Let $\ca$ be a $\spc$-category such that $\ca(x,y)$ is cofibrant for all objects $x,y \in \ca$:
\begin{itemize}
\item[-] Consider the following pushout
$$ 
\xymatrix{
\emptyset \ar[d] \ar[r] \ar@{}[dr]|{\lrcorner} & \ca \ar[d] \\
\underline{{\bf 1}_{\spc}} \ar[r] & \cb\,.
}
$$
Notice that $\cb$ is obtained from $\ca$, by simply introducing a new object. It is then clear that, for all objects $x,y \in \cb$, the $K$-spectrum $\cb(x,y)$ is cofibrant in the level model structure on $\spc$.
\item[-] Now, consider the following pushout
$$
\xymatrix{
U(X) \ar[r] \ar[d]_{i} \ar@{}[dr]|{\lrcorner} & \ca \ar[d]^P\\
U(Y) \ar[r] & \cb\,,
}
$$
where $i:X \rightarrow Y$ belongs to the set $I_{\spc}$ of generating cofibrations of $\spc$. Notice that $\ca$ and $\cb$ have the same set of objects and $P$ induces the identity map on the set of objects.
Since $i : X \rightarrow Y$ is a cofibration, proposition~\ref{clef1} and proposition~\cite[8.3]{Hovey} imply that the morphism 
$$P(x,y):\ca(x,y) \longrightarrow \cb(x,y)$$
is still a cofibration. Since $I$-cell complexes in $\spcc$ are built of $\emptyset$ (the initial object), the proposition is proven.
\end{itemize}
\end{proof}

\begin{lemma}\label{U1}
The functor
$$ U: \spc \longrightarrow \spcc\,\,\,\,\,\,(\mbox{see} \,\,\ref{funcU})$$
sends cofibrations to cofibrations.
\end{lemma}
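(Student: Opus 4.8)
The plan is to combine the fact that $U$ is a left adjoint (so that it preserves all colimits) with the cellular characterisation of cofibrations in a cofibrantly generated model category. Recall from~\ref{funcU} and remark~\ref{rkA} that $U:\spc \rightarrow \spcc$ is a left adjoint; in particular it commutes with pushouts, transfinite compositions and retracts. Moreover, by the very definition of the set of generating cofibrations $I_L$ (definition~\ref{defc}), the image $U(I_{\spc})$ of the generating cofibrations $I_{\spc}$ of the level model structure on $\spc$ is precisely the subset $I'_L \subseteq I_L$. Hence $U$ already sends each generating cofibration of $\spc$ to a cofibration of $\spcc$, and the task is to propagate this from the generators to all cofibrations.

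The second step reduces an arbitrary cofibration to the generating ones. Since the level model structure on $\spc$ is cofibrantly generated, any cofibration $j:X \rightarrow Y$ in $\spc$ is a retract of an $I_{\spc}$-cell complex, i.e. of a transfinite composition of pushouts of maps in $I_{\spc}$ (corollary~\cite[11.2.2]{Hirschhorn}). Applying the colimit-preserving functor $U$, the $\spc$-functor $U(j)$ is exhibited as a retract of a transfinite composition of pushouts of maps in $I'_L \subseteq I_L$. As the class of cofibrations in $\spcc$ (for the model structure of theorem~\ref{levelwise}) is closed under pushouts, transfinite compositions (\cite[10.2.2]{Hirschhorn}) and retracts, it follows that $U(j)$ is a cofibration in $\spcc$, as desired.

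The only point demanding genuine care is the very first one, namely that $U$ is indeed a left adjoint and therefore preserves the colimit constructions used above; this is the structural input coming from~\ref{funcU} and remark~\ref{rkA} (the same adjunction already exploited in the proof of lemma~\ref{Iinj}). Once colimit-preservation is secured the remainder of the argument is formal, being an instance of the standard principle that a colimit-preserving functor sending generating cofibrations to cofibrations sends all cofibrations to cofibrations. Note that we need not discuss trivial cofibrations here, since the statement concerns cofibrations alone.
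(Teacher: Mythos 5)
Your argument is correct and is essentially identical to the paper's own proof: both reduce to the facts that every cofibration in $\spc$ is a retract of a relative $I_{\spc}$-cell complex, that $U$ carries $I_{\spc}$ into the generating set $I'_L\subseteq I_L$ by definition~\ref{defc}, and that $U$ preserves the retracts, pushouts and transfinite compositions involved. The one cosmetic caveat is that remark~\ref{rkA} records a corepresentability (hence smallness) statement rather than literally exhibiting $U$ as a left adjoint, so it is safer to argue directly (as the paper implicitly does) that $U$ preserves the particular colimits appearing in the cell-complex presentation; this does not affect the validity of the proof.
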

\begin{proof}
The model structure of theorem~\ref{levelwise} is cofibrantly generated and so any cofibrant object in $\spcc$ is a rectract of a (possibly infinite) composition of pushouts along the generating cofibrations. Since the functor $U$ preserves retractions, colimits and send the generating cofibrations to (generating) cofibrations (see \ref{defc}) the lemma is proven.
\end{proof}
\subsection{Stable quasi-equivalences}
In this section we consider the stable model structure on $\spc$, see \cite[8.7]{Hovey}. Recall from \cite[7.2]{Hovey} that since $\cc$ is a closed symmetric monoidal category, the category $\mathsf{Sp}^{\Sigma}(\cc,K)$ is tensored, co-tensored and enriched over $\cc$. In what follows, we denote by $\underline{\mbox{Hom}}(-,-)$ this $\cc$-enrichment and by $\underline{\mbox{Hom}}_{\cc}(-,-)$ the internal $\mbox{Hom}$-object in $\cc$.

We will now construct a Quillen model structure on $\spcc$, whose weak equivalences are as follows.
\begin{definition}\label{we2}
A $\spc$-functor $F$ is a {\em stable quasi-equivalence} if it is a weak equivalence in the sense of definition~\ref{we}.
\end{definition}
\begin{notation}
We denote by $\cw_S$ the class of stable quasi-equivalences.
\end{notation}
\subsection{$Q$-functor}
We now construct a functor
$$ Q:\spcc \longrightarrow \spcc$$
and a natural transformation $\eta:\mbox{Id} \rightarrow Q$, from the identity functor on $\spcc$ to $Q$. 
Recall from \cite[8.7]{Hovey} the following set of morphisms in $\spc$:
$$ \cs:=\{ F_{n+1}(C\otimes K) \stackrel{\zeta_n^C}{\longrightarrow} F_nC\,|\,n \geq 0 \}\,,$$
where $C$ runs through the domains and codomains of the generating cofibrations of $\cc$ and $\zeta_n^C$ is the adjoint to the map 
$$C \otimes K \longrightarrow Ev_{n+1}F_n(C)=\Sigma_{n+1}\times(C\otimes K)$$
corresponding to the identity of $\Sigma_{n+1}$.
For each element of $\cs$, we consider a factorization
$$
\xymatrix{
F_{n+1}(C\otimes K) \ar[rr]^{\zeta^C_n} \ar@{>->}[dr]_{\widetilde{\zeta_n^C}} && F_n(C) \\
& Z_n^C \ar@{->>}[ur]_{\sim} &
}
$$
in the level model structure on $\spc$. Recall from \cite[8.7]{Hovey}, that the stable model structure on $\spc$ is the left Bousfield localization of the level model structure with respect to the morphisms $\zeta^C_n$ (or with respect to the morphisms $\widetilde{\zeta^C_n}$).
\begin{definition}
Let $\widetilde{\cs}$ be the following set of morphisms in $\mathsf{Sp}^{\Sigma}(\cc,K)$:
$$
i \otimes \widetilde{\zeta^C_n}: B \otimes F_{n+1}(C\otimes K) \underset{A\otimes F_{n+1}(C\otimes K)}{\coprod}A\otimes Z_n^C \longrightarrow  B\otimes Z_n^C\,,
$$
where $i:A \rightarrow B$ is a generating cofibration in $\cc$.
\end{definition}

\begin{remark}
Since $\mathsf{Sp}^{\Sigma}(\cc,K)$ is a monoidal model category, the elements of the set $\widetilde{\cs}$ are cofibrations with cofibrant (and sequentially small) domains.
\end{remark}

\begin{definition}\label{OmegaF}
Let $\ca$ be a $\spc$-category. The functor $Q:\spcc \rightarrow \spcc$ is obtained by applying the small object argument, using the set $J'_L \cup U(\widetilde{\cs})$ (see \ref{funcU}) to factorize the $\spc$-functor
$$ \ca \longrightarrow \bullet,$$
where $\bullet$ denotes the terminal object. 
\end{definition}
\begin{remark}\label{Omeg}
We obtain in this way a functor $Q$ and a natural transformation $\eta: \mbox{Id} \rightarrow Q$. Notice also that $Q(\ca)$ has the same objects as $\ca$, and the R.L.P. with respect to the set $J'_L \cup U(\widetilde{\cs})$.
\end{remark}

\begin{proposition}\label{omega}
Let $\ca$ be a $\spc$-category which has the right lifting property with respect to the set $J'_L \cup U(\widetilde{\cs})$. Then it satisfies the following condition:
\begin{itemize}
\item[$\Omega)$] for all objects $x,y \in \ca$, the $K$-spectrum $\ca(x,y)$ is an $\Omega$-spectrum (see \cite[8.6]{Hovey}).
\end{itemize}
\end{proposition}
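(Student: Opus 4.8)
The plan is to translate the lifting property of $\ca$ into Hovey's characterisation of stably fibrant $K$-spectra as $\Omega$-spectra, working one hom-spectrum at a time. Recall from \cite[8.7]{Hovey} that the stable model structure on $\spc$ is the left Bousfield localisation of the level model structure at the set $\cs=\{\zeta^C_n\}$, so that a level fibrant $K$-spectrum is stably fibrant --- equivalently an $\Omega$-spectrum, see \cite[8.6]{Hovey} --- exactly when it is $\cs$-local. The entire argument then consists in showing that the R.L.P. of $\ca$ against $J'_L\cup U(\widetilde{\cs})$ forces each $\ca(x,y)$ to be level fibrant and $\cs$-local.

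First I would unwind the two families of maps by adjunction. As in the proof of Lemma~\ref{Iinj}, for a morphism $g$ in $\spc$ the object $\ca$ has the R.L.P. with respect to the $\spc$-functor $U(g)$ (see~\ref{funcU}) if and only if, for all objects $x,y\in\ca$, the $K$-spectrum $\ca(x,y)$ has the R.L.P. with respect to $g$; this is the defining adjunction of $U$ recorded in remark~\ref{rkA}. Applying this to $J'_L=U(J_{\spc})$ shows that each $\ca(x,y)$ has the R.L.P. with respect to the generating trivial cofibrations $J_{\spc}$ of the level model structure, i.e. $\ca(x,y)$ is level (hence levelwise) fibrant. Applying it to $U(\widetilde{\cs})$ shows that each $\ca(x,y)$ is injective with respect to the set $\widetilde{\cs}$.

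Next I would exploit that $\spc$ is tensored, cotensored and enriched over $\cc$ (see \cite[7.2]{Hovey}) to read off what $\widetilde{\cs}$-injectivity means. The elements of $\widetilde{\cs}$ are the pushout-products $i\otimes\widetilde{\zeta^C_n}$ with $i$ running through the generating cofibrations of $\cc$, so the enriched pushout-product adjunction gives that $\ca(x,y)$ is injective with respect to $\widetilde{\cs}$ precisely when, for each $n$ and each domain or codomain $C$ of a generating cofibration of $\cc$, the map
$$\underline{\mbox{Hom}}(\widetilde{\zeta^C_n},\ca(x,y)):\underline{\mbox{Hom}}(Z_n^C,\ca(x,y))\longrightarrow\underline{\mbox{Hom}}(F_{n+1}(C\otimes K),\ca(x,y))$$
is right orthogonal to every generating cofibration of $\cc$, i.e. is a trivial fibration in $\cc$. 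Since $\zeta^C_n$ factors as $\widetilde{\zeta^C_n}$ followed by the level trivial fibration $Z_n^C\stackrel{\sim}{\rightarrow}F_nC$, and $\ca(x,y)$ is level fibrant with all objects in sight cofibrant, applying $\underline{\mbox{Hom}}(-,\ca(x,y))$ to this level equivalence gives a weak equivalence in $\cc$; composing, $\underline{\mbox{Hom}}(\zeta^C_n,\ca(x,y))$ is a weak equivalence for all $n$ and $C$. This is exactly $\cs$-locality of the level fibrant object $\ca(x,y)$, whence $\ca(x,y)$ is an $\Omega$-spectrum and condition $\Omega)$ holds.

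I expect the main obstacle to be this last translation. The formal reductions of the first two paragraphs follow from the $U$-adjunction and the pushout-product adjunction, but matching $\widetilde{\cs}$-injectivity with $\cs$-locality requires care: one must verify that mapping out of the cofibrant replacement $\widetilde{\zeta^C_n}$ detects the same locality as mapping out of $\zeta^C_n$ (using level fibrancy of $\ca(x,y)$ to invert the level trivial fibration $Z_n^C\to F_nC$ after applying $\underline{\mbox{Hom}}$), and that $\cs$-locality of a level fibrant $K$-spectrum is literally the condition, via the adjoints $\underline{\mbox{Hom}}(\zeta^C_n,-)$, that it be an $\Omega$-spectrum in the sense of \cite[8.6]{Hovey}.
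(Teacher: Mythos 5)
Your first two reductions are exactly the ones the paper makes: the R.L.P.\ against $J'_L$ gives, via corollary~\ref{fiblev} (equivalently the $U$-adjunction of remark~\ref{rkA}), that each $\ca(x,y)$ is level fibrant, and the R.L.P.\ against $U(\widetilde{\cs})$ gives, via the same adjunction and the pushout-product form of the maps $i\otimes\widetilde{\zeta^C_n}$, that each
$$\underline{\mbox{Hom}}(Z^C_n,\ca(x,y))\longrightarrow\underline{\mbox{Hom}}(F_{n+1}(C\otimes K),\ca(x,y))$$
is a trivial fibration in $\cc$. Up to this point your proposal and the paper coincide.

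The endgame, however, has a genuine gap. You convert this data into the statement that $\underline{\mbox{Hom}}(\zeta^C_n,\ca(x,y))$ is a weak equivalence for all $n$ and all $C$ in the domains and codomains of $I_{\cc}$, and then declare that ``this is exactly $\cs$-locality'' and hence, by Hovey's characterisation of stably fibrant objects, that $\ca(x,y)$ is an $\Omega$-spectrum. You flag this translation as the main obstacle but do not carry it out, and it is not a formality: $\cs$-locality in the Bousfield-localization sense is phrased in terms of simplicial homotopy function complexes, not the $\cc$-valued enrichment $\underline{\mbox{Hom}}(-,-)$, and identifying the two in this general (non-simplicial) setting is precisely the kind of work the paper's hypotheses are designed to avoid. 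The paper instead finishes elementarily: using the adjunction isomorphisms $\underline{\mbox{Hom}}(F_nC,X)\simeq\underline{\mbox{Hom}}_{\cc}(C,X_n)$ and $\underline{\mbox{Hom}}(F_{n+1}(C\otimes K),X)\simeq\underline{\mbox{Hom}}_{\cc}(C,X_{n+1}^K)$, it deduces that $\underline{\mbox{Hom}}_{\cc}(C,\ca(x,y)_n)\rightarrow\underline{\mbox{Hom}}_{\cc}(C,\ca(x,y)_{n+1}^K)$ is a weak equivalence for all such $C$, and then invokes condition $C7)$ --- the unit ${\bf 1}_{\cc}$ occurs among the domains and codomains of $I_{\cc}$ --- to take $C={\bf 1}_{\cc}$ and read off directly that $\ca(x,y)_n\rightarrow\ca(x,y)_{n+1}^K$ is a weak equivalence, which is the definition of $\Omega$-spectrum in \cite[8.6]{Hovey}. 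Your argument never produces this last weak equivalence and never uses $C7)$, which is the hypothesis introduced into the paper's axiom list exactly for this step; without it (or without a justified comparison of $\underline{\mbox{Hom}}$ with homotopy function complexes), knowing the maps $\underline{\mbox{Hom}}_{\cc}(C,-)$ are weak equivalences only for the special objects $C$ does not let you conclude.
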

\begin{proof}
By corollary~\ref{fiblev}, $\ca$ has the R.L.P. with respect to $J'_L$ if and only if for all objects $x,y \in \ca$, the $K$-spectrum $\ca(x,y)$ is levelwise fibrant in $\spc$.

Now, suppose that $\ca$ has the R.L.P. with respect to $U(\widetilde{\cs})$. Then by adjunction, for all objects $x,y \in \ca$, the induced morphism
$$ \underline{\mbox{Hom}}(Z^C_n, \ca(x,y)) \longrightarrow \underline{\mbox{Hom}}(F_{n+1}(C\otimes K), \ca(x,y))$$
is a trivial fibration in $\cc$. Notice also that we have the following weak equivalences
$$
\begin{array}{rcl}
\underline{\mbox{Hom}}(Z^C_n, \ca(x,y)) & \simeq & \underline{\mbox{Hom}}(F_n(C), \ca(x,y)) \\
 & \simeq & \underline{\mbox{Hom}}_{\cc}(C, \ca(x,y)_n)
\end{array}
$$
and
$$
\begin{array}{rcl}
\underline{\mbox{Hom}}(F_{n+1}(C\otimes K), \ca(x,y)) & \simeq & \underline{\mbox{Hom}}_{\cc}(C \otimes K, \ca(x,y)_{n+1}) \\
 & \simeq & \underline{\mbox{Hom}}_{\cc}(C, \ca(x,y)^K_{n+1})\,.
\end{array}
$$
This implies that the induced morphisms in $\cc$
$$ \underline{\mbox{Hom}}_{\cc}(C,\ca(x,y)_n) \stackrel{\sim}{\longrightarrow} \underline{\mbox{Hom}}_{\cc}(C,\ca(x,y)_{n+1}^K)\,,$$
are weak equivalences, where $C$ runs through the domains and codomains of the generating cofibrations of $\cc$.
By condition $C7)$, we obtain the weak equivalence
$$\ca(x,y)_n \simeq \underline{\mbox{Hom}}_{\cc}({\bf 1}_{\cc},\ca(x,y)_n) \stackrel{\sim}{\longrightarrow} \underline{\mbox{Hom}}_{\cc}({\bf 1}_{\cc},\ca(x,y)_{n+1}^K) \simeq \ca(x,y)^K_{n+1}\,.$$
In conclusion, for all objects $x,y \in \ca$, the $K$-spectrum $\ca(x,y)$ is an $\Omega$-spectrum.
\end{proof}

\begin{proposition}\label{fibres}
Let $\ca$ be a $\spc$-category. The $\spc$-functor
$$ \eta_{\ca}: \ca \longrightarrow Q(\ca)$$
is a stable quasi-equivalence (\ref{we2}).
\end{proposition}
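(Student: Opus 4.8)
The plan is to verify conditions $WE1)$ and $WE2)$ of Definition~\ref{we} for $\eta_{\ca}$, the ambient monoidal model category now being $\spc$ with its \emph{stable} model structure. By construction (Definition~\ref{OmegaF}) $\eta_{\ca}$ is a transfinite composition of pushouts of $\spc$-functors taken from $J'_L \cup U(\widetilde{\cs})$. Since every such generator is obtained by applying the functor $U$ (see~\ref{funcU}) to a morphism of $\spc$, and $U$ of a morphism is the identity on its (two-element) object set, none of these pushouts creates new objects. Hence, as recorded in Remark~\ref{Omeg}, $Q(\ca)$ has the same objects as $\ca$ and $\eta_{\ca}$ is the identity on objects; in particular $[\eta_{\ca}]$ is a bijection on objects, so $WE2')$ holds. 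By the remark following Definition~\ref{we} it then suffices to prove $WE1)$: that $\eta_{\ca}(x,y)\colon \ca(x,y) \to Q(\ca)(x,y)$ is a stable equivalence in $\spc$ for all objects $x,y$.

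For $WE1)$ I would analyse the cell presentation of $\eta_{\ca}$ one generator at a time. A pushout along a generator of $J'_L = U(J_{\spc})$ is, by Lemma~\ref{cell} (whose proof rests on the hom-object analysis of Proposition~\ref{clef}), a levelwise quasi-equivalence; its effect on each hom-object is therefore a level equivalence, hence a fortiori a stable equivalence. A pushout along a generator $U(g)$ with $g \in \widetilde{\cs}$ requires more. Here I would first note that each $g$ is the pushout-product of a generating cofibration $i$ of $\cc$ with the cofibrant model $\widetilde{\zeta^C_n}$ of a localizing map $\zeta^C_n$; since $\widetilde{\zeta^C_n}$ is a stable trivial cofibration (the level-cofibration part of a factorization of a map inverted by the stable structure) and $\spc$ is a monoidal model category, the pushout-product axiom makes $g$ itself a stable trivial cofibration. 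Pushing out $U(g)$ then induces, on each hom-object, a map built from iterated tensorings of $g$ with objects of $\spc$ (the mechanism behind Propositions~\ref{clef} and~\ref{clef1}); the stable monoid axiom of condition~$C6)$ guarantees that any such map is a stable equivalence.

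Finally I would assemble these local statements. By Lemma~\ref{U1} and Proposition~\ref{clef1} each elementary pushout induces a \emph{cofibration} on every hom-object, and by the two cases above this cofibration is moreover a stable equivalence, i.e. a stable trivial cofibration. Because $\eta_{\ca}$ is the identity on objects, for each fixed pair $(x,y)$ these hom-object maps organise into an honest transfinite composition in $\spc$, and stable trivial cofibrations are closed under transfinite composition; hence $\eta_{\ca}(x,y)$ is a stable equivalence, establishing $WE1)$. Together with $WE2')$ this shows $\eta_{\ca}$ is a stable quasi-equivalence. Consistently, Proposition~\ref{omega} shows the target $Q(\ca)$ has $\Omega$-spectrum hom-objects, as one expects of a stabilization. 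The step I expect to be the main obstacle is the second case: freely attaching a morphism along $U(g)$ spreads, through pre- and post-composition, across many hom-objects, so the induced map on a fixed hom-object is not a single pushout of $g$ but a transfinite composite of tensorings with $g$; turning this into a stable equivalence is exactly where the stable monoid axiom $C6)$ is indispensable.
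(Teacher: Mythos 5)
Your proposal is correct and follows essentially the same route as the paper: condition $WE2')$ holds because $\eta_{\ca}$ is the identity on objects, and condition $WE1)$ follows because every generator in $J'_L \cup U(\widetilde{\cs})$ comes from a stable trivial cofibration in $\spc$, so that the monoidality of the stable structure together with the monoid axiom from condition $C6)$ lets Proposition~\ref{clef} control each pushout on hom-objects. The paper simply treats both families of generators uniformly in one application of Proposition~\ref{clef}, whereas you split them into two cases and make the transfinite-composition step explicit; this is only a difference of presentation, not of substance.
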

\begin{proof}
Notice that the elements of the set $J_{\spc}\cup \widetilde{\cs} $ are trivial cofibrations in the stable model structure on $\spc$ (see \cite[8.8]{Hovey}). Notice also that the stable model structure is monoidal (\cite[8.11]{Hovey}) and that by condition $C6)$, it satisfies the monoid axiom. This implies, by proposition~\ref{clef}, that $\eta_{\ca}$ satisfies condition $WE1)$. Since the $\spc$-categories $Q(\ca)$ and $\ca$ have the same set of objects condition $WE2')$ is automatically verified.
\end{proof}
\subsection{Main theorem}
\begin{definition}
A $\spc$-functor $F:\ca \rightarrow \cb$ is:
\begin{itemize}
\item[-] a {\em $Q$-weak equivalence} if $Q(F)$ is a levelwise quasi-equivalence (\ref{leveleq}).
\item[-] a {\em cofibration} if it is a cofibration in the model structure of theorem~\ref{levelwise}.
\item[-] a {\em $Q$-fibration} if it has the R.L.P. with respect to all cofibrations which are $Q$-weak equivalences.
\end{itemize}
\end{definition}
\begin{lemma}
Let $\ca$ be a $\spc$-category which satisfies condition $\Omega)$ of proposition~\ref{omega}. Then the category $[\ca]$ (see \ref{[]}) is naturally identified with the category $[Ev_0(\ca)]$.
\end{lemma}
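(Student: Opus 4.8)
\emph{Proof plan.} First observe that both categories have exactly the object set of $\ca$: the functor $[-]$ does not change objects, and neither does $Ev_0$, so $\mathrm{obj}\,[\ca]=\mathrm{obj}\,\ca=\mathrm{obj}\,Ev_0(\ca)=\mathrm{obj}\,[Ev_0(\ca)]$. The whole content of the lemma therefore lies in the hom-sets. Applying remark~\ref{[]} with $\cc$ replaced by $\spc$ equipped with the stable model structure of \cite[8.7]{Hovey}, we have for objects $x,y$
$$[\ca](x,y)=[\,{\bf 1}_{\spc},\ca(x,y)\,]_{\mathsf{Ho}(\spc)},\qquad [Ev_0(\ca)](x,y)=[\,{\bf 1}_{\cc},Ev_0(\ca(x,y))\,]_{\mathsf{Ho}(\cc)}.$$
The decisive observation is that the monoidal unit of $\spc$ is ${\bf 1}_{\spc}=F_0({\bf 1}_{\cc})$, so the left-hand hom-set is $[F_0({\bf 1}_{\cc}),\ca(x,y)]_{\mathsf{Ho}(\spc)}$.

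Next I would obtain the hom-set bijection from the adjunction $(F_0,Ev_0)$ of \cite[7.3]{Hovey}. Since the stable structure is a left Bousfield localization of the level structure, it has the same cofibrations, its trivial fibrations coincide with the level ones, and its fibrations form a subclass of the level fibrations; hence $Ev_0$, being right Quillen for the level structure, sends stable (trivial) fibrations to (trivial) fibrations in $\cc$, so $(F_0,Ev_0)$ is a Quillen pair for the stable structures as well. By hypothesis $\ca(x,y)$ satisfies condition $\Omega)$ of proposition~\ref{omega}, i.e. it is an $\Omega$-spectrum, hence fibrant in the stable model structure; and ${\bf 1}_{\cc}$ is cofibrant (as in all the examples of~\ref{exampleC}). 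Consequently the derived adjunction requires no cofibrant or fibrant replacement and produces a bijection
$$[F_0({\bf 1}_{\cc}),\ca(x,y)]_{\mathsf{Ho}(\spc)}\;\cong\;[{\bf 1}_{\cc},Ev_0(\ca(x,y))]_{\mathsf{Ho}(\cc)},$$
natural in $\ca(x,y)$, which is precisely the desired identification of hom-sets.

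Finally I would upgrade this bijection to an isomorphism of categories. Both $[\ca]$ and $[Ev_0(\ca)]$ arise by base change of $\ca$ --- viewed over $\spc$, respectively over $\cc$ after applying the strong monoidal functor $Ev_0$ --- along the lax monoidal functors $[{\bf 1}_{\spc},-]_{\mathsf{Ho}(\spc)}$ and $[{\bf 1}_{\cc},Ev_0(-)]_{\mathsf{Ho}(\cc)}$ to $\mathbf{Set}$. Because $F_0$ and $Ev_0$ are strong monoidal and ${\bf 1}_{\spc}=F_0({\bf 1}_{\cc})$, the adjunction isomorphism above is a \emph{monoidal} natural isomorphism between these two lax monoidal functors on the full subcategory of $\Omega$-spectra. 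Base change along a monoidal natural isomorphism preserves composition and identities, so the hom-set bijections assemble into an identity-on-objects isomorphism of categories $[\ca]\cong[Ev_0(\ca)]$, natural in $\ca$.

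The routine parts are the objects and the derived adjunction bijection; the step demanding genuine care is the monoidal compatibility of the last paragraph, i.e. checking that the adjunction isomorphism intertwines the two enriched-composition maps. This reduces to the coherence between the strong monoidal structures on $F_0$ and $Ev_0$ and the unit/counit of the adjunction, together with the (harmless, since we work over $\Omega$-spectra with cofibrant unit) passage from point-set smash products to the derived smash in $\mathsf{Ho}(\spc)$.
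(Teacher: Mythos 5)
Your proposal is correct and follows essentially the same route as the paper: the paper's (much terser) proof likewise invokes that $\Omega$-spectra are exactly the stably fibrant objects of $\spc$, uses the strong monoidality of the adjunction $(F_0,Ev_0)$ (so that ${\bf 1}_{\spc}=F_0({\bf 1}_{\cc})$ and the derived adjunction needs no fibrant replacement) to get $[{\bf 1}_{\spc},\ca(x,y)]\simeq[{\bf 1}_{\cc},Ev_0(\ca(x,y))]$, and concludes. Your extra care about cofibrancy of ${\bf 1}_{\cc}$ (which in fact follows from conditions $C2)$ and $C7)$) and about compatibility with the enriched composition simply makes explicit what the paper leaves implicit.
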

\begin{proof}
Recall from \cite[8.8]{Hovey}, that an object in $\spc$ is stably fibrant if and only if it is an $\Omega$-spectrum. Since the adjunction
$$
\xymatrix{
\mathsf{Sp}^{\Sigma}(\cc,K) \ar@<1ex>[d]^{Ev_0} \\
\cc \ar@<1ex>[u]^{F_0}
}
$$
is strong monoidal, we have the following identifications
$$ [ {\bf 1}_{\spc}, \ca(x,y)] \simeq [{\bf 1}_{\cc}, Ev_0(\ca(x,y))] \,,\,\,\, x,y \in \ca\,,$$
which imply the lemma.
\end{proof}
\begin{corollary}\label{[]2}
Let $F:\ca \rightarrow \cb$ be a $\spc$-functor between $\spc$-categories which satisfy condition $\Omega)$. Then $F$ satisfies condition $WE2)$ if and only if $Ev_0(F)$ satisfies condition $WE2)$.
\end{corollary}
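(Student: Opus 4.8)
The plan is to deduce the statement formally from the preceding lemma. Since both $\ca$ and $\cb$ satisfy condition $\Omega)$, that lemma furnishes natural identifications of categories $[\ca] \simeq [Ev_0(\ca)]$ and $[\cb] \simeq [Ev_0(\cb)]$. My goal is to organize these two identifications into a commutative square relating $[F]$ to $[Ev_0(F)]$, and then to appeal to the elementary fact that a functor which is conjugate, through two isomorphisms of categories, to a given functor is an equivalence precisely when that given functor is.

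First I would make precise the naturality asserted in the lemma, with respect to the $\spc$-functor $F$. Recall that the identification supplied there is assembled, object by object, from the bijections
$$ [{\bf 1}_{\spc}, \ca(x,y)] \simeq [{\bf 1}_{\cc}, Ev_0(\ca(x,y))], \quad x,y \in \ca, $$
coming from the strong monoidal adjunction $(F_0, Ev_0)$. Because this adjunction isomorphism is natural in its argument, the induced identifications on $\mbox{Hom}$-sets are compatible with the map that $F$ induces on morphism objects, and similarly for $Ev_0(F)$. Consequently the two identifications of the lemma assemble into a square
$$
\xymatrix{
[\ca] \ar[r]^-{[F]} \ar[d]_{\simeq} & [\cb] \ar[d]^{\simeq} \\
[Ev_0(\ca)] \ar[r]_-{[Ev_0(F)]} & [Ev_0(\cb)]
}
$$
which commutes (up to natural isomorphism), with the two vertical functors isomorphisms of categories.

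Finally I would conclude: in a commutative square whose vertical functors are isomorphisms of categories, the top horizontal functor is an equivalence if and only if the bottom one is. Since, by definition~\ref{we}, $F$ satisfies $WE2)$ exactly when $[F]$ is an equivalence of categories, and $Ev_0(F)$ satisfies $WE2)$ exactly when $[Ev_0(F)]$ is, the corollary follows. The only genuine content lies in the commutativity of the square---that is, in the naturality of the lemma's identification with respect to $F$---and I expect this compatibility to be the main (though mild) obstacle; everything after it is purely formal.
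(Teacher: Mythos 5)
Your proposal is correct and matches the paper's intent: the paper offers no separate proof for this corollary, treating it as an immediate consequence of the preceding lemma, whose word ``naturally'' carries exactly the compatibility with $F$ that you spell out. Your expansion of that naturality into a commutative square is the right (and only) content here.
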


\begin{proposition}\label{weak=level}
Let $F:\ca \rightarrow \cb$ be stable quasi-equivalence between $\spc$-categories which satisfy the condition $\Omega)$. Then $F$ is a levelwise quasi-equivalence (\ref{leveleq}).
\end{proposition}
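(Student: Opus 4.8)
The plan is to verify conditions L1) and L2) of Definition~\ref{leveleq} separately, exploiting that the two hom-$K$-spectra involved are always $\Omega$-spectra.

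First I would establish L1). Recall from \cite[8.7]{Hovey} that the stable model structure on $\spc$ is the left Bousfield localization of the level model structure with respect to the set $\cs$, and from \cite[8.8]{Hovey} that the stably fibrant objects are precisely the $\Omega$-spectra. Since $\ca$ and $\cb$ satisfy condition $\Omega)$, all the $K$-spectra $\ca(x,y)$ and $\cb(Fx,Fy)$ are $\Omega$-spectra, hence fibrant in the stable model structure. A fundamental property of left Bousfield localizations (\cite[3.2.13]{Hirschhorn}) asserts that a map between local, i.e. stably fibrant, objects is a stable equivalence if and only if it is already a weak equivalence in the unlocalized, i.e. level, model structure. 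As $F$ is a stable quasi-equivalence, each $F(x,y):\ca(x,y) \rightarrow \cb(Fx,Fy)$ belongs to $\cw_{\spc}$, that is, is a stable equivalence; by the cited property it is therefore a level equivalence. This establishes condition L1).

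Next I would treat L2), splitting the required weak equivalence of $Ev_0(F)$ in $\ccat$ into its two constituent conditions WE1) and WE2) of Definition~\ref{we}. For WE1), one needs each $Ev_0(F)(x,y): Ev_0(\ca(x,y)) \rightarrow Ev_0(\cb(Fx,Fy))$ to belong to $\cw_{\cc}$; but this is exactly the level-$0$ component of the level equivalence $F(x,y)$ just obtained in L1), and a level equivalence is by definition a weak equivalence in each level, so WE1) holds. For WE2), since both $\ca$ and $\cb$ satisfy condition $\Omega)$, Corollary~\ref{[]2} applies and shows that $Ev_0(F)$ satisfies WE2) if and only if $F$ does; as $F$ is a stable quasi-equivalence it satisfies WE2), hence so does $Ev_0(F)$. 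Combining WE1) and WE2) yields that $Ev_0(F)$ is a weak equivalence in $\ccat$, which is precisely condition L2).

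The main obstacle is the first step, the identification of stable equivalences between $\Omega$-spectra with level equivalences. Once the stable model structure on $\spc$ is recognized as the Bousfield localization of the level structure and the $\Omega$-spectra as its fibrant objects, this reduces to the standard fact that localization equivalences between local objects coincide with the original weak equivalences. The verification of L2) is then essentially bookkeeping: condition WE1) is the level-$0$ shadow of L1), while condition WE2) is handled entirely by Corollary~\ref{[]2}, which has already done the work of transferring the essential-surjectivity statement between the $\spc$-enriched and the $\cc$-enriched settings.
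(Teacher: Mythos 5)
Your proposal is correct and follows essentially the same route as the paper: condition $L1)$ is obtained from the standard fact that a local (stable) equivalence between local objects ($\Omega$-spectra) in a left Bousfield localization is a weak equivalence in the unlocalized (level) structure, and the $WE2)$ part of condition $L2)$ is handled by Corollary~\ref{[]2}. Your treatment is in fact slightly more explicit than the paper's, since you spell out that the $WE1)$ part of $L2)$ is just the level-$0$ component of the level equivalences established in $L1)$, a point the paper leaves implicit.
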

\begin{proof}
Since $F$ satisfies condition $WE1)$ and $\ca$ and $\cb$ satisfy condition $\Omega)$, lemmas \cite[4.3.5-4.3.6]{Hirschhorn} imply that $F$ satisfies condition $L1)$. By corollary~\ref{[]2}, the $\spc$-functor $F$ satisfies condition $WE2)$ if and only if it satisfies condition $L2)$. This proves the proposition. 
\end{proof}

\begin{lemma}\label{coincide1}
A $\spc$-functor $F:\ca \rightarrow \cb$ is a $Q$-weak equivalence if and only if it is a stable quasi-equivalence.
\end{lemma}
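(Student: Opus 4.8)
The statement to prove is Lemma~\ref{coincide1}: a $\spc$-functor $F:\ca \rightarrow \cb$ is a $Q$-weak equivalence if and only if it is a stable quasi-equivalence. The plan is to exploit the naturality of the functor $Q$ together with the commutative square relating $F$ and $Q(F)$ via the natural transformation $\eta$. The key structural fact I would use is Proposition~\ref{fibres}, which tells us that for every $\spc$-category $\ca$, the functor $\eta_{\ca}:\ca \rightarrow Q(\ca)$ is a stable quasi-equivalence. Combined with the two-out-of-three property for the class $\cw_S$ of stable quasi-equivalences (which holds since weak equivalences in the sense of Definition~\ref{we} satisfy two-out-of-three), this should let me transfer the stable-quasi-equivalence property between $F$ and $Q(F)$.

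\smallskip
\noindent\textbf{The main bridge.}
First I would write down the naturality square
$$
\xymatrix{
\ca \ar[r]^{\eta_{\ca}} \ar[d]_F & Q(\ca) \ar[d]^{Q(F)} \\
\cb \ar[r]_{\eta_{\cb}} & Q(\cb)\,.
}
$$
By Proposition~\ref{fibres}, both horizontal maps $\eta_{\ca}$ and $\eta_{\cb}$ are stable quasi-equivalences. The crucial observation I need is that $Q(\ca)$ and $Q(\cb)$ satisfy condition $\Omega)$ of Proposition~\ref{omega}: indeed, by Remark~\ref{Omeg} the $\spc$-category $Q(\ca)$ has the right lifting property with respect to the set $J'_L \cup U(\widetilde{\cs})$, so Proposition~\ref{omega} applies and each Hom-spectrum $Q(\ca)(x,y)$ is an $\Omega$-spectrum. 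Therefore, by Proposition~\ref{weak=level}, a $\spc$-functor between $\spc$-categories satisfying $\Omega)$ is a stable quasi-equivalence if and only if it is a levelwise quasi-equivalence. This is the heart of the argument: on the target of $Q$, the two notions of equivalence coincide, and $Q(F)$ is a levelwise quasi-equivalence (i.e.\ $F$ is a $Q$-weak equivalence) precisely when $Q(F)$ is a stable quasi-equivalence.

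\smallskip
\noindent\textbf{Closing the logic by two-out-of-three.}
With the bridge in place, both directions follow by chasing the commutative square. Suppose $F$ is a stable quasi-equivalence. Since $\eta_{\cb}\circ F = Q(F)\circ \eta_{\ca}$, and $F$, $\eta_{\ca}$, $\eta_{\cb}$ all belong to $\cw_S$, two-out-of-three forces $Q(F)$ into $\cw_S$ as well. But $Q(F)$ is a map between $\spc$-categories satisfying $\Omega)$, so by Proposition~\ref{weak=level} it is a levelwise quasi-equivalence; hence $F$ is a $Q$-weak equivalence. Conversely, if $F$ is a $Q$-weak equivalence, then $Q(F)$ is a levelwise quasi-equivalence, which in particular (every levelwise equivalence is a stable one, since the stable structure is a Bousfield localization of the level structure) is a stable quasi-equivalence; then the same square and two-out-of-three give $F \in \cw_S$.

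\smallskip
\noindent\textbf{Anticipated obstacle.}
The main point requiring care is verifying that $Q(\ca)$ genuinely satisfies condition $\Omega)$ so that Proposition~\ref{weak=level} is applicable---this is where Remark~\ref{Omeg} and Proposition~\ref{omega} must be combined cleanly. A secondary subtlety is making sure the direction ``levelwise quasi-equivalence $\Rightarrow$ stable quasi-equivalence'' is justified, which relies on the stable model structure being a left Bousfield localization of the level structure so that level equivalences are in particular stable weak equivalences, and on condition $WE2)$ being insensitive to which of the two structures one uses (both $\eta$-constructions preserve objects, so the induced functors on $[-]$ agree). Once these compatibilities are pinned down, the argument is a direct diagram chase using two-out-of-three.
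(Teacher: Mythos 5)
Your proposal is correct and follows essentially the same route as the paper's proof: the naturality square for $\eta$, Proposition~\ref{fibres} plus two-out-of-three to reduce to $Q(F)$, and Proposition~\ref{weak=level} together with condition $\Omega)$ on $Q(\ca)$, $Q(\cb)$ to identify stable and levelwise quasi-equivalence at the level of $Q(F)$. Your extra care about the converse implication (levelwise $\Rightarrow$ stable) is a point the paper leaves implicit, but it is the same argument.
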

\begin{proof}
We have at our disposal a commutative square
$$
\xymatrix{
\ca \ar[d]_F \ar[r]^-{\eta_{\ca}} & Q(\ca) \ar[d]^{Q(F)} \\
\cb \ar[r]_-{\eta_{\cb}} & Q(\cb) 
}
$$
where the $\spc$-functors $\eta_{\ca}$ and $\eta_{\cb}$ are stable quasi-equivalences by proposition~\ref{fibres}. Since the class $\cw_S$ satisfies the two out of three axiom, the $\spc$-functor $F$ is a stable quasi-equivalence if and only if $Q(F)$ is a stable quasi-equivalence. The $\spc$-categories $Q(\ca)$ and $Q(\cb)$ satisfy condition $\Omega)$ and so by proposition~\ref{weak=level}, $Q(F)$ is a levelwise quasi-equivalence. This proves the lemma.
\end{proof}
\begin{theorem}\label{stable}
The category $\spcc$ admits a right proper Quillen model structure whose weak equivalences are the stable quasi-equivalences (\ref{we2}) and the cofibrations those of theorem~\ref{levelwise}.
\end{theorem}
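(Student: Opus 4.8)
The plan is to obtain the stable structure as a Bousfield--Friedlander-type localization of the level model structure of Theorem~\ref{levelwise}, using the endofunctor $Q$ of Definition~\ref{OmegaF} and its unit $\eta:\mathrm{Id}\to Q$ (Remark~\ref{Omeg}) as a stable fibrant replacement. Taking the level structure as the ambient model category, which is right proper by Proposition~\ref{Rproper}, I would declare the cofibrations to be those of Theorem~\ref{levelwise} and the weak equivalences to be the $Q$-weak equivalences; by Lemma~\ref{coincide1} the latter are exactly the stable quasi-equivalences $\cw_S$. The localization theorem then produces the asserted model structure, together with its right properness, once the following three points are verified: (A1) every levelwise quasi-equivalence is a stable quasi-equivalence (so that $\cw_L\subseteq\cw_S$ and, equivalently by Lemma~\ref{coincide1}, $Q$ preserves $\cw_L$); (A2) the maps $\eta_{Q\ca}$ and $Q(\eta_{\ca})$ lie in $\cw_L$; and (A3) the $Q$-weak equivalences are stable under base change along $Q$-fibrations between $Q$-fibrant objects.

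The cleanest point is (A2). By Remark~\ref{Omeg} the $\spc$-categories $Q\ca$ and $QQ\ca$ have the right lifting property with respect to $J'_L\cup U(\widetilde{\cs})$, hence satisfy condition $\Omega)$ by Proposition~\ref{omega}; Proposition~\ref{fibres} shows that $\eta_{\ca}$ and $\eta_{Q\ca}$ are stable quasi-equivalences, and applying the naturality square of $\eta$ to $\eta_{\ca}$ together with the two-out-of-three property of $\cw_S$ forces $Q(\eta_{\ca})$ to be one as well. Since all the objects in sight satisfy $\Omega)$, where level and stable equivalences coincide and $[\,-\,]$ is computed by $Ev_0$ (the lemma preceding Corollary~\ref{[]2}), Proposition~\ref{weak=level} promotes these three stable quasi-equivalences to levelwise quasi-equivalences, giving (A2). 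The same circle of ideas identifies the $Q$-fibrant objects as the $\spc$-categories with level-fibrant Hom-spectra satisfying condition $\Omega)$, between which a stable quasi-equivalence is automatically a levelwise one.

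I expect the genuine work to sit in (A1) and (A3). For (A1) the subtlety is that, off the $\Omega)$-objects, levelwise and stable quasi-equivalences need not agree: condition $L1)$ gives $WE1)$ since a level equivalence in $\spc$ is a stable equivalence, but the essential-surjectivity clause $WE2)$ cannot be read off directly, because $[\ca]$ and $[Ev_0(\ca)]$ differ for non-$\Omega$ categories. The way around this is to transport the question along $\eta$: since $\eta_{\ca}$ and $\eta_{\cb}$ are stable quasi-equivalences and $Q\ca,Q\cb$ satisfy $\Omega)$, a diagram chase reduces $WE2)$ for a levelwise quasi-equivalence $F$ to the statement that $Ev_0(\eta_{\ca})$ is a weak equivalence in $\ccat$, which in turn should follow from the level-$0$ behaviour of the generators $J'_L\cup U(\widetilde{\cs})$ of $Q$ together with the monoid axiom (condition $C3)$). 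The base-change axiom (A3) is the main obstacle: here I would first use the identification of $Q$-fibrant objects above to reduce, via Proposition~\ref{weak=level}, to stability of \emph{levelwise} quasi-equivalences under pullback, and then argue as in Proposition~\ref{Rproper}---pullbacks in $\spcc$ are formed Hom-spectrum-wise and objectwise, condition $L1)$ is controlled by right properness of the level structure on $\spc$ (Remark~\ref{rk1}) and condition $L2)$ by condition $C3)$. Making the passage between the $Q$-localized equivalences and the levelwise ones precise throughout these pullback manipulations is the crux of the proof.
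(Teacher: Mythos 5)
Your overall strategy is the paper's: both apply the Bousfield--Friedlander-type localization (theorem~\ref{modif}) to the level structure of theorem~\ref{levelwise}, with the $Q$ and $\eta$ of definition~\ref{OmegaF}, and your treatment of (A2) is exactly the paper's. The gaps are in your proposed verifications of (A1) and (A3). For (A1) you correctly reduce to $\cw_L\subseteq\cw_S$, but your route to $WE2)$ passes through the claim that $Ev_0(\eta_{\ca})$ is a weak equivalence in $\ccat$, and that claim is false in general: for $n=0$ the generators $U(i\otimes\widetilde{\zeta^C_0})$ evaluate at level $0$ to (essentially) $i\otimes Ev_0(Z^C_0)\simeq i\otimes C$, a non-trivial cofibration, so pushing out along them genuinely changes $Ev_0(\ca(x,y))$ up to weak equivalence (already for $\cc=Ch$ one has $Ev_0(F_1(\mathbb{Z}))=0$ while $Ev_0$ of its $\Omega$-spectrum replacement is $\mathbb{Z}[-1]$). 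The correct, and much softer, fix is that the stably fibrant replacement $M\to M_f$ together with the derived adjunction $(F_0,Ev_0)$ and $F_0({\bf 1}_{\cc})={\bf 1}_{\spc}$ yields a natural, identity-on-objects comparison functor $[Ev_0(\cb)]\to[\cb]$; essential surjectivity of $[Ev_0(F)]$ (condition $L2)$) then transports along it to essential surjectivity of $[F]$. One does not need $[\cb]\cong[Ev_0(\cb)]$ off the $\Omega)$-objects, only that isomorphisms in $[Ev_0(\cb)]$ map to isomorphisms in $[\cb]$.

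For (A3) your reduction is inapplicable: the map being pulled back is $\eta_{\ca}:\ca\to Q(\ca)$ for an \emph{arbitrary} $\ca$, and $\eta_{\ca}$ is a stable quasi-equivalence that is \emph{not} a levelwise quasi-equivalence unless $\ca$ already satisfies $\Omega)$; hence neither proposition~\ref{weak=level} nor the argument of proposition~\ref{Rproper} (pullback of a \emph{levelwise} equivalence along a \emph{level} fibration) can be invoked. The paper's verification rests on two inputs you never use: lemma~\ref{U1}, which shows that $U$ sends cofibrations to cofibrations and hence, by adjunction, that any $Q$-fibration $P$ is Hom-object-wise a stable fibration in $\spc$; and condition $C6)$, the right properness of the \emph{stable} model structure on $\spc$. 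Applied Hom-object by Hom-object these give condition $WE1)$ for the induced map $\eta_{\ca_*}:\ca\times_{Q(\ca)}\cb\to\cb$, and $WE2')$ is automatic because $\eta_{\ca}$, hence $\eta_{\ca_*}$, is the identity on objects. Without the passage through $U$ and the stable right properness of $\spc$, the base-change axiom does not go through.
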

\begin{proof}
The proof will consist on verifying the conditions of theorem~\ref{modif}. We consider for $\cm$ the Quillen model structure of theorem~\ref{levelwise} and for $Q$ and $\eta$, the functor and natural transformation defined in \ref{OmegaF}. The Quillen model structure of theorem~\ref{levelwise} is right proper (see~\ref{Rproper}) and by lemma~\ref{coincide1} the $Q$-weak equivalences are precisely the stable quasi-equivalences. We now verify conditions (A1), (A2) and (A3):
\begin{itemize}
\item[(A1)] Let $F:\ca \rightarrow \cb$ be a levelwise quasi-equivalence. We have the following commutative square
$$
\xymatrix{
\ca \ar[d]_F \ar[r]^-{\eta_{\ca}} & Q(\ca) \ar[d]^-{Q(F)}\\
\cb \ar[r]_-{\eta_{\cb}} & Q(\cb)
}
$$
with $\eta_{\ca}$ and $\eta_{\cb}$ stable quasi-equivalences. Notice that since $F$ is a levelwise quasi-equivalence it is in particular a stable quasi-equivalence. The class $\cw_S$ satisfies the three out of three axiom and so $Q(F)$ is a stable quasi-equivalence. Since the $Q(\ca)$ and $Q(\cb)$ satisfy condition $\Omega)$, proposition~\ref{weak=level} implies that $Q(F)$ is in fact a levelwise quasi-equivalence.

\item[(A2)] We now show that for every $\spc$-category $\ca$, the $\spc$-functors
$$\eta_{Q(\ca)}, Q(\eta_{\ca}):Q(\ca) \rightarrow QQ(\ca)$$
are levelwise quasi-equivalences.
Since the $\spc$-functors $\eta_{Q(\ca)}$ and $Q(\eta_{\ca})$ are stable quasi-equivalences between $\spc$-categories which satisfy condition $\Omega)$, proposition~\ref{weak=level} implies that they are levelwise quasi-equivalences.

\item[(A3)] We start by observing that if $P: \cc \rightarrow \cd$ is a $Q$-fibration, then for all objects $x,y \in \cc$, the morphism
$$ P(x,y): \cc(x,y) \longrightarrow \cd(Px,Py)$$
is a $Q$-fibration in $\spc$. In fact, by proposition~\ref{U1}, the functor
$$U: \spc \longrightarrow \spcc$$
sends cofibrations to cofibrations. Since clearly it sends stable equivalences in $\spc$ to stable quasi-equivalences in $\spcc$, the claim follows.

Now consider the diagram
$$
\xymatrix{
\ca \underset{Q(\ca)}{\times}\cb \ar[d] \ar[r] \ar@{}[dr]|{\ulcorner} & \cb \ar[d]^P\\
\ca \ar[r]_{\eta_{\ca}} & Q(\ca),
}
$$
with $P$ a $Q$-fibration. The stable model structure on $\spc$ is right proper by condition $C6)$, and so we conclude, by construction of fiber products in $\spcc$, that the induced $\spc$-functor
$$ \eta_{\ca_*}: \ca \underset{Q(\ca)}{\times}\cb \longrightarrow \cb$$
satisfies condition $WE1)$. Since $\eta_{\ca}$ induces the identity map on objects so thus $\eta_{\ca_*}$, and so condition $WE2')$ is proven.
\end{itemize}
\end{proof}
\subsection{Properties II}
\begin{proposition}\label{SfibS}
A $\spc$-category $\ca$ is fibrant with respect to theorem~\ref{stable} if and only if for all objects $x,y \in \ca$, the $K$-spectrum $\ca(x,y)$ is an $\Omega$-spectrum in $\spc$.
\end{proposition}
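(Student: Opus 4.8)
The plan is to exploit the characterization of fibrant objects furnished by the general modification theorem~\ref{modif}, which produces the stable structure of theorem~\ref{stable} out of the levelwise structure of theorem~\ref{levelwise} via the pair $(Q,\eta)$. That theorem identifies the fibrant objects of the $Q$-localized structure as exactly those $\spc$-categories $\ca$ that are fibrant in the levelwise structure and for which the component $\eta_{\ca}:\ca \rightarrow Q(\ca)$ is a levelwise quasi-equivalence. Thus it suffices to prove that, for a levelwise-fibrant $\ca$, the map $\eta_{\ca}$ is a levelwise quasi-equivalence if and only if each $\ca(x,y)$ is an $\Omega$-spectrum. Throughout I will use that $Q(\ca)$ always satisfies condition $\Omega)$: by remark~\ref{Omeg} it has the R.L.P. with respect to $J'_L \cup U(\widetilde{\cs})$, so proposition~\ref{omega} applies.

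For the \emph{if} direction, suppose every $\ca(x,y)$ is an $\Omega$-spectrum, i.e. $\ca$ satisfies condition $\Omega)$. Since $\Omega$-spectra are in particular levelwise fibrant, corollary~\ref{fiblev} shows $\ca$ is fibrant in the levelwise structure. Both $\ca$ and $Q(\ca)$ then satisfy $\Omega)$, while $\eta_{\ca}$ is a stable quasi-equivalence by proposition~\ref{fibres}; hence proposition~\ref{weak=level} upgrades $\eta_{\ca}$ to a levelwise quasi-equivalence, and $\ca$ is stably fibrant.

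For the \emph{only if} direction, suppose $\ca$ is stably fibrant. Then $\ca$ is levelwise fibrant, so by corollary~\ref{fiblev} each $\ca(x,y)$ is levelwise fibrant in $\spc$, and $\eta_{\ca}$ is a levelwise quasi-equivalence, so condition $L1)$ gives that each $\eta_{\ca}(x,y):\ca(x,y)\rightarrow Q(\ca)(x,y)$ is a level equivalence, with target an $\Omega$-spectrum. I then fix objects $x,y$ and compare the adjoint structure maps of $\ca(x,y)$ and $Q(\ca)(x,y)$: in the naturality square whose verticals are $\eta_{\ca}(x,y)_n$ and $\underline{\mbox{Hom}}_{\cc}(K,\eta_{\ca}(x,y)_{n+1})$ and whose bottom edge is the structure map of the $\Omega$-spectrum $Q(\ca)(x,y)$ (an equivalence), both verticals are weak equivalences---the first because $\eta_{\ca}(x,y)$ is a level equivalence, the second because $K$ is cofibrant (condition $C5)$), so that $\underline{\mbox{Hom}}_{\cc}(K,-)$ is right Quillen and carries the weak equivalence $\eta_{\ca}(x,y)_{n+1}$ between the fibrant objects $\ca(x,y)_{n+1}$ and $Q(\ca)(x,y)_{n+1}$ to a weak equivalence. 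By the two out of three property the top edge $\ca(x,y)_n\rightarrow \underline{\mbox{Hom}}_{\cc}(K,\ca(x,y)_{n+1})$ is a weak equivalence, so $\ca(x,y)$ is an $\Omega$-spectrum.

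I expect the main obstacle to be this final two out of three step, which is where levelwise fibrancy is essential: it is needed both to invoke corollary~\ref{fiblev} and, more delicately, to guarantee that the cotensor functor $\underline{\mbox{Hom}}_{\cc}(K,-)$ takes the level equivalence $\eta_{\ca}(x,y)_{n+1}$ to a weak equivalence, since a right Quillen functor is only known to preserve weak equivalences between fibrant objects. A secondary point requiring care is the correct reading of the fibrant-object description coming from theorem~\ref{modif}, namely that stable fibrancy is equivalent to levelwise fibrancy together with $\eta_{\ca}$ being a levelwise quasi-equivalence.
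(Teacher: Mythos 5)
Your proof is correct and follows essentially the same route as the paper: reduce via corollary~\ref{cormodif} to the statement that $\eta_{\ca}$ is a levelwise quasi-equivalence, and then compare the adjoint structure maps of $\ca(x,y)$ and of the $\Omega$-spectrum $Q(\ca)(x,y)$ in the naturality square, concluding by two out of three. You are in fact slightly more complete than the paper, which only writes out the ``only if'' direction and leaves the converse (obtained exactly as you do, from propositions~\ref{fibres} and \ref{weak=level}) implicit, and you give a more careful justification that the cotensor $\underline{\mbox{Hom}}_{\cc}(K,-)$ carries $\eta_{\ca}(x,y)_{n+1}$ to a weak equivalence, a point the paper asserts without comment.
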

\begin{notation}
We denote these fibrant $\spc$-categories by {\em $Q$-fibrant}.
\end{notation}
\begin{proof}
By corollary~\ref{cormodif}, $\ca$ is fibrant with respect to theorem~\ref{stable} if and only if it is fibrant for the structure of theorem~\ref{levelwise} (see corollary~\ref{fiblev}) and the $\spc$-functor $\eta_{\ca}:\ca \rightarrow Q(\ca)$ is a levelwise quasi-equivalence. Observe that $\eta_{\ca}$ is a levelwise quasi-equivalence if and only if for all objects $x,y \in \ca$ the morphism
$$ \eta_{\ca}(x,y):\ca(x,y) \longrightarrow Q(\ca)(x,y)$$
is a level equivalence in $\spc$. Since $Q(\ca)(x,y)$ is an $\Omega$-spectrum we have the following commutative diagrams (for all $n\geq 0$)
$$
\xymatrix{
\ca(x,y)_n \ar[d] \ar[r]^-{\widetilde{\delta_n}} & \ca(x,y)_{n+1}^K \ar[d] \\
Q(\ca)(x,y)_n \ar[r]_-{\widetilde{\delta_n}} &  Q(\ca)(x,y)_{n+1}^K\,,
}
$$
where the bottom and vertical arrows are weak equivalences in $\cc$. This implies that
$$ \widetilde{\delta}_n: \ca(x,y)_n \longrightarrow \ca(x,y)_{n+1}^K, \,\,\, n \geq 0$$
is a weak equivalence in $\cc$ and so, for all $x,y \in \ca$, the $K$-spectrum $\ca(x,y)$ is an $\Omega$-spectrum in $\spc$.
\end{proof}
\begin{remark}\label{resolS}
Remark~\ref{Omeg} and propositions~\ref{omega} and \ref{fibres} imply that $\eta_{\ca}:\ca \rightarrow Q(\ca)$ is a (functorial) fibrant replacement of $\ca$ in the model structure of theorem~\ref{stable}. In particular $\eta_{\ca}$ induces the identity map on the set of objects.

Notice also, that since the cofibrations (and so the trivial fibrations) of the model structure of theorem~\ref{stable} are the same as those of theorem~\ref{levelwise}, proposition~\ref{cof}  and lemma~\ref{U1} stay valid in the Quillen model structure of theorem~\ref{stable}.
\end{remark}

\begin{proposition}\label{Qfibfib}
Let $F:\ca \rightarrow \cb$ is a $\spc$-functor between $Q$-fibrant $\spc$-categories. Then $F$ is a $Q$-fibration if and only if it is a fibration.
\end{proposition}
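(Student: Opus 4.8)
The plan is to exploit the fact that the stable structure of theorem~\ref{stable} and the levelwise structure of theorem~\ref{levelwise} share exactly the same cofibrations, so that the stable structure is a localization of the levelwise one, and then run the standard ``retract of a fibration'' argument. One direction requires no fibrancy hypothesis. Every levelwise quasi-equivalence is a stable quasi-equivalence (as observed in condition (A1) of the proof of theorem~\ref{stable}), and the cofibrations coincide; hence every levelwise trivial cofibration is a trivial cofibration in the stable structure. A $Q$-fibration, having the R.L.P. with respect to all stable trivial cofibrations, therefore has the R.L.P. with respect to the set $J_L$, and so by proposition~\ref{Fibrations} it is a fibration in the sense of theorem~\ref{levelwise}.

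For the converse, suppose $F:\ca \rightarrow \cb$ is a levelwise fibration between $Q$-fibrant $\spc$-categories. I would factor $F$ in the stable model structure of theorem~\ref{stable} as $\ca \stackrel{i}{\rightarrow} \ce \stackrel{p}{\rightarrow} \cb$, with $i$ a (stable) trivial cofibration and $p$ a $Q$-fibration. Since $\cb$ is $Q$-fibrant, the canonical map $\cb \rightarrow \bullet$ to the terminal object is a $Q$-fibration; composing it with $p$ shows that $\ce \rightarrow \bullet$ is a $Q$-fibration, so that $\ce$ is $Q$-fibrant as well. Now $i$ is a stable quasi-equivalence (lemma~\ref{coincide1}) between $\spc$-categories both satisfying condition $\Omega)$ (proposition~\ref{SfibS}), so proposition~\ref{weak=level} forces $i$ to be a levelwise quasi-equivalence, hence a levelwise trivial cofibration.

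It remains to assemble the retract. Because $F$ is a levelwise fibration and $i$ is a levelwise trivial cofibration, the lifting problem with top edge the identity of $\ca$, left edge $i$, right edge $F$, and bottom edge $p$ admits a solution $r:\ce \rightarrow \ca$ satisfying $r\circ i = \mbox{id}_{\ca}$ and $F\circ r = p$. This exhibits $F$ as a retract of the $Q$-fibration $p$ over $\cb$, and since $Q$-fibrations are stable under retracts, $F$ is a $Q$-fibration. The one delicate point---and the step I expect to be the main obstacle---is verifying that the intermediate object $\ce$ coming out of the stable factorization is again $Q$-fibrant: this is precisely what licenses the passage from the stable equivalence $i$ to a levelwise equivalence via proposition~\ref{weak=level}, and without it the argument collapses. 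Everything else is the familiar retract manipulation.
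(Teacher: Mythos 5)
Your proof is correct, but it follows a genuinely different route from the paper's. The paper disposes of both directions through theorem~\ref{Qfib} (the Bousfield--localization characterization of $Q$-fibrations as levelwise fibrations whose naturality square for $\eta$ is homotopy cartesian): the forward implication is immediate from that statement, and for the converse the paper observes that $\eta_{\ca}$ and $\eta_{\cb}$ are levelwise quasi-equivalences (by condition $\Omega)$ and proposition~\ref{weak=level}), so that right properness of the levelwise structure (proposition~\ref{Rproper}) makes the square homotopy cartesian. You instead avoid theorem~\ref{Qfib} altogether: the forward direction is the lifting-property argument (levelwise trivial cofibrations are stable trivial cofibrations since the cofibrations agree and levelwise quasi-equivalences are stable quasi-equivalences, so a $Q$-fibration has the R.L.P.\ with respect to $J_L$ and proposition~\ref{Fibrations} applies), and the converse is the classical factor-and-retract argument for fibrations between fibrant objects in a localization. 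Each step of yours is licensed by the paper: the stable factorization exists by theorem~\ref{stable}, $\ce$ is $Q$-fibrant because $Q$-fibrations compose, proposition~\ref{SfibS} and proposition~\ref{weak=level} upgrade $i$ to a levelwise trivial cofibration exactly as you say, and $Q$-fibrations are closed under retracts since they are defined by a right lifting property. What the paper's route buys is brevity and no need for factorizations; what yours buys is independence from the homotopy-cartesian criterion and from right properness of the levelwise structure (you only use the model-category axioms for theorem~\ref{stable} plus propositions~\ref{SfibS}, \ref{weak=level} and \ref{Fibrations}). You correctly identified the $Q$-fibrancy of the intermediate object $\ce$ as the pivot of the argument.
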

\begin{proof}
By theorem~\ref{Qfib}, if $F$ is a $Q$-fibration then it is a fibration. Let us now prove the converse. Suppose that $F:\ca \rightarrow \cb$ is a fibration. Consider the commutative square
$$
\xymatrix{
\ca \ar[d]_F \ar[r]^-{\eta_{\ca}} & Q(\ca) \ar[d]^{Q(F)} \\
\cb \ar[r]_-{\eta_{\cb}} & Q(\cb)\,.
}
$$
Since $\ca$ and $\cb$ satisfy the condition $\Omega)$, proposition~\ref{weak=level} implies that $\eta_{\ca}$ and $\eta_{\cb}$ are levelwise quasi-equivalences. Since the model structure of theorem~\ref{levelwise} is right proper (see~\ref{Rproper}), the previous square is homotopy cartesian and so by theorem~\ref{Qfib}, $F$ is in fact a $Q$-fibration.
\end{proof}
\begin{remark}\label{Qadj2}
Since a $Q$-fibration is a fibration, the adjunction of remark~\ref{Qadj1} induces a natural Quillen adjunction
$$
\xymatrix{
\spcc \ar@<1ex>[d]^{Ev_0} \\
\ccat \ar@<1ex>[u]^{F_0}
}
$$
with respect to the model structure of theorem~\ref{stable}.
\end{remark}
\subsection{Homotopy Idempotence}
In \cite[9.1]{Hovey}, Hovey proved the following idempotence property:
\begin{itemize}
\item[S)] If the functor 
$$-\otimes K: \cc \longrightarrow \cc$$
 is a Quillen equivalence.
\end{itemize}
Then the Quillen adjunction
$$
\xymatrix{
\spc \ar@<1ex>[d]^{Ev_0} \\
\cc \ar@<1ex>[u]^{F_0}\,,
}
$$
is a Quillen equivalence, where $\spc$ is endowed with the stable model structure.
\begin{theorem}\label{conditionS}
If conditon $S)$ is verified then the Quillen adjunction of remark~\ref{Qadj2}
$$
\xymatrix{
\spcc \ar@<1ex>[d]^{Ev_0} \\
\ccat \ar@<1ex>[u]^{F_0}
}
$$
is a Quillen equivalence. 
\end{theorem}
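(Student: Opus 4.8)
The plan is to apply the Quillen equivalence criterion of Proposition~\ref{Ncrit} to the Quillen adjunction $(F_0,Ev_0)$ of Remark~\ref{Qadj2}, with $F_0$ the left and $Ev_0$ the right adjoint, and to reduce both of its conditions to the already-known fact that, under condition $S)$, the underlying adjunction
$$
\xymatrix{
\spc \ar@<1ex>[d]^{Ev_0} \\
\cc \ar@<1ex>[u]^{F_0}
}
$$
is a Quillen equivalence (and hence itself satisfies conditions $a)$ and $b)$ of Proposition~\ref{Ncrit} for the stable model structure on $\spc$). The key bookkeeping observation I will use throughout is that $F_0$, $Ev_0$ and the fibrant replacement functor $Q$ all act objectwise on Hom-objects and induce the identity on objects, so that every condition in Definition~\ref{we} can be tested Hom-object by Hom-object.

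First I would check condition $a)$, that $Ev_0$ reflects weak equivalences between fibrant objects. Let $F:\ca \rightarrow \cb$ be a $\spc$-functor between $Q$-fibrant $\spc$-categories with $Ev_0(F)$ a weak equivalence in $\ccat$. By Proposition~\ref{SfibS} every $\ca(x,y)$ and $\cb(Fx,Fy)$ is an $\Omega$-spectrum, hence stably fibrant; so condition $WE1)$ for $Ev_0(F)$ says that $Ev_0(\ca(x,y)) \rightarrow Ev_0(\cb(Fx,Fy))$ lies in $\cw_{\cc}$, and since the underlying $Ev_0$ reflects weak equivalences between stably fibrant objects, $F$ satisfies $WE1)$. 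For $WE2)$, Corollary~\ref{[]2} shows that $F$ satisfies $WE2)$ exactly when $Ev_0(F)$ does, which is the hypothesis. Thus $F$ is a stable quasi-equivalence.

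Next I would check condition $b)$: for a cofibrant $\cc$-category $\ca$ I take the functorial fibrant replacement $\eta_{F_0\ca}:F_0\ca \rightarrow Q(F_0\ca)$ of Remark~\ref{resolS} and must show that
$$ \ca \longrightarrow Ev_0 F_0(\ca) \longrightarrow Ev_0(Q(F_0\ca)) $$
is a weak equivalence in $\ccat$. On Hom-objects this is the composite
$$ \ca(x,y) \longrightarrow Ev_0 F_0(\ca(x,y)) \longrightarrow Ev_0\big(Q(F_0\ca)(x,y)\big)\,, $$
and because $\eta_{F_0\ca}$ satisfies $WE1)$ while $Q(F_0\ca)$ has $\Omega$-spectrum Hom-objects, the map $F_0(\ca(x,y)) \rightarrow Q(F_0\ca)(x,y)$ is a fibrant replacement in the stable structure on $\spc$. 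Hence the displayed composite is exactly the one appearing in condition $b)$ of Proposition~\ref{Ncrit} for the underlying Quillen equivalence evaluated at $\ca(x,y)$, so it belongs to $\cw_{\cc}$ and $WE1)$ holds; condition $WE2')$ is then automatic, since all three $\cc$-categories share the same objects and every map in sight is the identity on objects.

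The main obstacle is the cofibrancy input this argument quietly demands: to invoke condition $b)$ of the underlying equivalence at $\ca(x,y)$ I must know that the Hom-objects of a cofibrant $\cc$-category are cofibrant in $\cc$. This is the $\ccat$-analogue of Proposition~\ref{cof}, and I would establish it by the same method---writing a cofibrant object as a retract of an $I$-cell complex and checking that pushouts along the generating cofibrations $U(I_{\cc})$ and $\emptyset \rightarrow \underline{{\bf 1}_{\cc}}$ preserve objectwise cofibrancy of the Hom-objects. Granting this, conditions $a)$ and $b)$ of Proposition~\ref{Ncrit} both hold and the adjunction is a Quillen equivalence.
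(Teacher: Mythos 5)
Your proposal is correct, and for condition $a)$ it coincides with the paper's argument verbatim: both use Proposition~\ref{SfibS} to reduce $WE1)$ to condition $a)$ of the underlying Quillen equivalence $Ev_0:\spc\rightarrow\cc$ on Hom-objects, and Corollary~\ref{[]2} for $WE2)$. For condition $b)$ you take a genuinely different, more hands-on route. The paper does not descend to individual Hom-objects: it observes that a cofibrant $\cc$-category $\ca$ with object set $I$ is in particular cofibrant in the fixed-object-set category $\cc^I\text{-}\mbox{Cat}$ of Remark~\ref{nova1}, invokes Remark~\ref{nova2} (i.e.\ the Schwede--Shipley result \cite[6.4]{SS} that a weak monoidal Quillen equivalence induces a Quillen equivalence on $I$-object enriched categories) to get condition $b)$ for $(F_0^I,Ev_0^I)$, and then uses Remark~\ref{resolS} to identify $\eta_{F_0\ca}$ as a fibrant resolution in $\spc^I\text{-}\mbox{Cat}$. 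You instead test the composite Hom-object by Hom-object and feed each $\ca(x,y)$ directly into condition $b)$ of the underlying equivalence; this is more elementary and bypasses the $\cc^I$-machinery, but, as you correctly flag, it requires the extra lemma that Hom-objects of cofibrant $\cc$-categories are cofibrant in $\cc$ (the $\ccat$-analogue of Proposition~\ref{cof}). That lemma is not stated in the paper but does go through by the same cell-induction via Proposition~\ref{clef1}, using that ${\bf 1}_{\cc}$ is cofibrant (which follows from conditions $C2)$ and $C7)$), so your argument closes. The trade-off is symmetric: the paper's route instead rests on the unproved assertion that cofibrancy in $\ccat$ implies cofibrancy in $\cc^I\text{-}\mbox{Cat}$, so neither version is strictly cheaper; yours makes the cofibrancy input explicit at the level where it is actually used.
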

\begin{proof}
The proof will consist on verifying conditions $a)$ and $b)$ of proposition~\ref{Ncrit}.
\begin{itemize}
\item[a)] Let $F:\ca \rightarrow \cb$ be a $\spc$-functor between $Q$-fibrant $\spc$-categories, such that $Ev_0(\ca):Ev_0(\ca) \rightarrow Ev_0(\cb)$ is a weak equivalence in $\ccat$. Since $\ca$ and $\cb$ satisfy condition $\Omega)$, condition $a)$ of proposition~\ref{Ncrit} applied to the Quillen equivalence
$$ Ev_0:\mathsf{Sp}^{\Sigma}(\cc,K) \longrightarrow \cc$$
implies that $F$ satisfies condition $WE1)$. Notice also that since $Ev_0(F)$ is a weak equivalence in $\ccat$, corollary~\ref{[]2} implies that $F$ satisfies condition $WE2)$. In conclusion $F$ is a stable quasi-equivalence.
\item[b)] Let $\ca$ be a cofibrant $\cc$-category and let us denote by $I$ its set of objects. Notice, that in particular $\ca$ is cofibrant in $\cc^I\text{-}\mbox{Cat}$, with respect to the model structure of remark~\ref{nova1}. Now, since the adjunction
$$
\xymatrix{
\spc \ar@<1ex>[d]^{Ev_0} \\
\cc \ar@<1ex>[u]^{F_0}\,,
}
$$
is a strong monoidal Quillen equivalence (\ref{smwe}), remark~\ref{nova2} implies that condition $b)$ of proposition~\ref{Ncrit} is verified for the adjunction $(F_0^I,Ev_0^I)$. By remark~\ref{resolS}, the $\spc$-functor $\eta_{\ca}:\ca \rightarrow Q(\ca)$ is a fibrant resolution in $\spc^I$ and so we conclude that condition $b)$ is verified.
\end{itemize}
\end{proof}
\begin{remark}\label{glob1}
Clearly the first example of \ref{exampleC} satisfies condition $S)$, since the functor $-\otimes \mathbb{Z}[1]:Ch \rightarrow Ch$ is the classical suspension procedure. This implies, by theorem~\ref{conditionS}, that we have a Quillen equivalence
$$
\xymatrix{
\mathsf{Sp}^{\Sigma}(Ch,\mathbb[1])\text{-}\mbox{Cat} \ar@<1ex>[d]^{Ev_0} \\
\mathsf{dgcat} \ar@<1ex>[u]^{F_0} \,.
}
$$
\end{remark}
\section{A Quillen equivalence criterion}\label{chapter4}
Let $\cc$ and $\cd$ be two monoidal model categories with $K$ an object of $\cc$ and $K'$ an object of $\cd$.
In this chapter we establish a general criterion for a Quillen equivalence between $\mathsf{Sp}^{\Sigma}(\cc, K)$ and $\mathsf{Sp}^{\Sigma}(\cd, K')$ to induce a Quillen equivalence between $\mathsf{Sp}^{\Sigma}(\cc, K)\text{-}\mbox{Cat}$ and $\mathsf{Sp}^{\Sigma}(\cd,K')\text{-}\mbox{Cat}$ (see theorem~\ref{chave}).

Suppose that both $\cc$, $K$, $\ccat$ and $\cd$, $K'$, $\cd \text{-}\mbox{Cat}$ satisfy conditions $C1)\text{-}C7)$ and $H1)\text{-}H3)$ from the previous chapter. Suppose moreover, that we have a weak monoidal Quillen equivalence (\ref{smwe}) 
$$
\xymatrix{
\spc \ar@<1ex>[d]^N \\
\mathsf{Sp}^{\Sigma}(\cd, K') \ar@<1ex>[u]^L
}
$$
with respect to the stable model structures (see \cite[8.7]{Hovey}), which preserves fibrations and levelwise weak equivalences. By proposition~\ref{Adjunction}, we have the following adjunction
$$
\xymatrix{
\spcc \ar@<1ex>[d]^N \\
\mathsf{Sp}^{\Sigma}(\cd, K')\text{-}\mbox{Cat} \ar@<1ex>[u]^{L_{cat}}\,.
}
$$
\begin{theorem}\label{chave}
If there exists a functor 
$$\widetilde{N}: \ccat \longrightarrow \cd \text{-}\mbox{Cat}$$
which preserves fibrations and weak equivalences and makes the following diagram
$$
\xymatrix{
\ccat \ar[d]_{\widetilde{N}} && \spcc \ar[ll]_-{Ev_0} \ar[d]^N \\
\cd \text{-}\mbox{Cat} && \mathsf{Sp}^{\Sigma}(\cd,K')\text{-}\mbox{Cat} \ar[ll]^-{Ev_0}
}
$$
commute, then the adjunction $(L_{cat},N)$ is a Quillen adjunction (with respect to the model structure of theorem~\ref{stable}). 

Moreover, if $\widetilde{N}$ satisfies the following `reflexion' condition:
\begin{itemize}
\item[R)] If the $\cd$-functor $\widetilde{N}(F): \widetilde{N}(\ca) \rightarrow \widetilde{N}(\cb)$ satisfies condition $WE2)$, so does the $\cc$-functor $F:\ca \rightarrow \cb$.
\end{itemize}
Then the Quillen adjunction $(L_{cat},N)$ is a Quillen equivalence.
\end{theorem}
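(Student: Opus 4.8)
The plan is to establish first that $(L_{cat},N)$ is a Quillen adjunction via Proposition~\ref{crit}, and then to upgrade it to a Quillen equivalence by verifying conditions $a)$ and $b)$ of Proposition~\ref{Ncrit}. For the Quillen adjunction I would check that $N$ preserves trivial fibrations and fibrations between fibrant objects. The trivial fibrations of theorem~\ref{stable} coincide with those of theorem~\ref{levelwise}, hence with the class $\mathbf{Surj}$ (Lemma~\ref{Iinj}); $N$ preserves condition $Sj1)$ because the underlying functor $N\colon\spc\to\mathsf{Sp}^{\Sigma}(\cd,K')$ is right Quillen and the induced $N$ acts as $N$ on Hom-$K$-spectra, and it preserves $Sj2)$ because it is the identity on object-sets. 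For a fibration $F\colon\ca\to\cb$ between $Q$-fibrant categories, the Hom-objects are $\Omega$-spectra (Proposition~\ref{SfibS}), so $F$ is a level fibration and satisfies $F1)$ and $F2)$ of Lemma~\ref{fibrations}. Condition $F1)$ is preserved since a level fibration between $\Omega$-spectra is a stable fibration, $N$ preserves stable fibrations and $\Omega$-spectra, and a stable fibration between $\Omega$-spectra is again a level fibration; condition $F2)$ is preserved because the commuting square gives $Ev_0(N(F))=\widetilde{N}(Ev_0(F))$ and $\widetilde{N}$ preserves fibrations. As $N(\ca)$ and $N(\cb)$ are again $Q$-fibrant, Proposition~\ref{Qfibfib} shows that the level fibration $N(F)$ is a fibration for the structure of theorem~\ref{stable}.

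For condition $a)$ I would show that $N$ reflects weak equivalences between $Q$-fibrant objects. Let $F\colon\ca\to\cb$ be a $\spc$-functor between $Q$-fibrant categories with $N(F)$ a stable quasi-equivalence. Condition $WE1)$ for $F$ is reflected objectwise: the Hom-objects are $\Omega$-spectra and the underlying $N$, being part of a Quillen equivalence, reflects weak equivalences between fibrant objects. For $WE2)$, Corollary~\ref{[]2} (applicable since $N(\ca),N(\cb)$ are $Q$-fibrant) shows that $N(F)$ satisfies $WE2)$ if and only if $Ev_0(N(F))$ does; but $Ev_0(N(F))=\widetilde{N}(Ev_0(F))$, so the reflection hypothesis $R)$ forces $Ev_0(F)$ to satisfy $WE2)$, and a second application of Corollary~\ref{[]2}, now for the $Q$-fibrant $\ca,\cb$, yields $WE2)$ for $F$. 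Hence $F$ is a stable quasi-equivalence.

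For condition $b)$ I would mimic the proof of theorem~\ref{conditionS}. Given a cofibrant $X\in\mathsf{Sp}^{\Sigma}(\cd,K')\text{-}\mbox{Cat}$ with object-set $I$, it is in particular cofibrant in the fixed-object category $\mathsf{Sp}^{\Sigma}(\cd,K')^{I}\text{-}\mbox{Cat}$ of remark~\ref{nova1}. Applying remark~\ref{nova2} to the weak monoidal Quillen equivalence $(L,N)$ yields condition $b)$ of Proposition~\ref{Ncrit} for the fixed-object adjunction $(L_{cat}^{I},N^{I})$, and by remark~\ref{resolS} the unit $\eta\colon L_{cat}(X)\to Q(L_{cat}(X))$ is a fibrant resolution in $\spc^{I}$ that is the identity on objects; together these give that the derived unit $X\to N(Q(L_{cat}(X)))$ is a weak equivalence. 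The main obstacle is exactly this last step: the reduction to a fixed object-set is what allows remark~\ref{nova2} to absorb the interaction between the lax \emph{co}monoidal structure of $L$ and the enriched composition (the analogue of the cell-by-cell comparison of Schwede--Shipley), since outside a fixed object-set $L_{cat}$ does not compute by applying $L$ to the Hom-objects. A secondary delicate point is the bookkeeping in the adjunction step, where one must pass between level and stable fibrations of Hom-objects through the Bousfield localization while simultaneously matching the $\cc$-functor condition $F2)$ across $\widetilde{N}$.
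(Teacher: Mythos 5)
Your proposal is correct and follows essentially the same route as the paper: Proposition~\ref{crit} for the Quillen adjunction (trivial fibrations plus fibrations between $Q$-fibrant objects via Proposition~\ref{Qfibfib}), then conditions $a)$ and $b)$ of Proposition~\ref{Ncrit}, with the reflection hypothesis $R)$ and the commutative square handling $WE2)$ exactly as in the text. The only cosmetic difference is in the adjunction step, where you re-derive preservation of level fibrations through the $\Omega$-spectrum/stable-fibration detour, whereas the paper simply invokes the standing hypothesis that the underlying $N$ preserves fibrations and levelwise weak equivalences and first records the Quillen adjunction at the level of Theorem~\ref{levelwise}.
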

\begin{proof}
We start by proving that the adjunction $(L_{cat},N)$ is a Quillen adjunction with respect to the Quillen model structure of theorem~\ref{levelwise}. Observe that since the functor $N:\spc \longrightarrow \mathsf{Sp}^{\Sigma}(\cd,K')$ preserves fibrations and levelwise weak equivalences, proposition~\ref{Fibrations}, definition~\ref{leveleq} and the above commutative square imply that 
$$ N: \spcc \longrightarrow \mathsf{Sp}^{\Sigma}(\cd,K')\text{-}\mbox{Cat}$$
preserves fibrations and levelwise quasi-equivalences.

We now show that this adjunction induces a Quillen adjunction on the model structure of theorem~\ref{stable}. For this we use proposition~\ref{crit}. The functor $N$ clearly preserves trivial fibrations since they are the same in the model structures of theorems \ref{levelwise} and \ref{stable}.

Now, let $F:\ca \rightarrow \cb$ be a $Q$-fibration between Q-fibrant objects (see \ref{SfibS}). By proposition~\ref{Qfibfib}, $F$ is a fibration an so thus $N(F)$. Since the condition $\Omega)$ is clearly preserved by the functor $N$, proposition~\ref{Qfibfib} implies that the $\mathsf{Sp}^{\Sigma}(\cd,K')$-functor $N(F)$ is in fact a $Q$-fibration.

We now show that if the `reflexion' condition $R)$ is satisfied, the adjunction $(L_{cat},N)$ is in fact a Quillen equivalence.
For this, we verify conditions $a)$ and $b)$ of proposition~\ref{Ncrit}.
\begin{itemize}
\item[a)] Let $F:\ca \rightarrow \cb$ be a $\spc$-functor between $Q$-fibrant objects such that $N(F)$ is a stable quasi-equivalence in $\mathsf{Sp}^{\Sigma}(\cd,K')\text{-}\mbox{Cat}$. Since condition $\Omega)$ is clearly preserved by the functor $N$, proposition~\ref{weak=level} implies that $N(F)$ is in fact a levelwise quasi-equivalence. Since $\ca$ and $\cb$ are $Q$-fibrant, condition $a)$ of proposition~\ref{Ncrit} applied to the Quillen equivalence
$$ N: \spc \longrightarrow \mathsf{Sp}^{\Sigma}(\cd,K')$$
implies that $F$ satisfies condition $WE1)$. Now, condition $R)$ and the above commutative square imply that it satisfies condition $L2')$. By proposition~\ref{weak=level}, we conclude that $F$ is a stable quasi-equivalence.
\item[b)] Let $\ca$ be a cofibrant $\mathsf{Sp}^{\Sigma}(\cd,K')$-category and let us denote by $I$ its set of objects.
Notice, that in particular $\ca$ is cofibrant in $\mathsf{Sp}^{\Sigma}(\cd, K')^I\text{-}\mbox{Cat}$, with respect to the module structure of remark~\ref{nova1}. Since the adjunction $(L,N)$ is a strong monoidal Quillen equivalence (\ref{smwe}), remark~\ref{nova2} implies that condition $b)$ of proposition~\ref{Ncrit} is verified for the adjunction $(L^I,N^I)$. By remark~\ref{resolS}, the $\spc$-functor $\eta_{\ca}:\ca \rightarrow Q(\ca)$ is a fibrant resolution in $\spc^I$ and so we conclude that condition $b)$ is verified.
\end{itemize}
\end{proof}

\section{Examples of Quillen equivalences}\label{chapter5}
In this chapter we describe two situations, where the conditions of theorem~\ref{chave} are satisfied.

{\bf First situation:} We consider the relationship between the examples $1)$ and $2)$ described in~\ref{exampleC} and~\ref{exampleD}: recall from \cite[4.9]{Shipley}, that the inclusion $i: Ch_{\geq 0} \rightarrow Ch$ induces a strong monoidal Quillen equivalence
$$
\xymatrix{
\mathsf{Sp}^{\Sigma}(Ch, \mathbb{Z}[1]) \ar@<1ex>[d]^{\tau_{\geq 0}} \\
\mathsf{Sp}^{\Sigma}(Ch_{\geq 0}, \mathbb{Z}[1]) \ar@<1ex>[u]^i \,,
}
$$
with respect to the stable model structures. Moreover, the truncation functor $\tau_{\geq 0}$ preserves fibrations and levelwise weak equivalences. 
By remark~\ref{corleftadj}, we obtain a adjunction
$$
\xymatrix{
\mathsf{Sp}^{\Sigma}(Ch, \mathbb{Z}[1])\text{-}\mbox{Cat} \ar@<1ex>[d]^{\tau_{\geq 0}} \\
\mathsf{Sp}^{\Sigma}(Ch_{\geq 0}, \mathbb{Z}[1])\text{-}\mbox{Cat} \ar@<1ex>[u]^i \,.
}
$$
\begin{remark}\label{glob2}
Notice that we have the following commutative square
$$
\xymatrix{
\mathsf{dgcat} \ar[d]_{\tau_{\geq 0}} && \mathsf{Sp}^{\Sigma}(Ch, \mathbb{Z}[1])\text{-}\mbox{Cat} \ar[d]^{\tau_{\geq o}} \ar[ll]_-{Ev_0} \\
\mathsf{dgcat}_{\geq 0} && \mathsf{Sp}^{\Sigma}(Ch_{\geq 0}, \mathbb{Z}[1])\text{-}\mbox{Cat} \ar[ll]^-{Ev_0}\,.
}
$$
Finally, since the functor
$$ \tau_{\geq 0}: \mathsf{dgcat} \longrightarrow \mathsf{dgcat}_{\geq 0}$$
satisfies the `reflexion' condition $R)$ of theorem~\ref{chave}, the conditions of theorem~\ref{chave} are satisfied and so the previous adjunction is a Quillen equivalence.
\end{remark}

{\bf Second situation:} We consider the relationship between the examples $2)$ and $3)$ described in~\ref{exampleC} and~\ref{exampleD}: recall from \cite[4.4]{Shipley}, that we have a weak monoidal Quillen equivalence
$$
\xymatrix{
\mathsf{Sp}^{\Sigma}(s\mathbf{Ab}, \widetilde{\mathbb{Z}}(S^1)) \ar@<1ex>[d]^{\phi^{\ast}N} \\
\mathsf{Sp}^{\Sigma}(Ch_{\geq 0}, \mathbb{Z}[1]) \ar@<1ex>[u]^L\,,
}
$$
with respect to the stable model structures. Moreover the functor $\phi^{\ast}N$ preserves fibrations and levelwise weak equivalences (see the proof of \cite[4.4]{Shipley}). By proposition~\ref{Adjunction}, we have the adjunction
$$
\xymatrix{
\mathsf{Sp}^{\Sigma}(s\mathbf{Ab}, \widetilde{\mathbb{Z}}(S^1))\text{-}\mbox{Cat} \ar@<1ex>[d]^{\phi^{\ast}N} \\
\mathsf{Sp}^{\Sigma}(Ch_{\geq 0}, \mathbb{Z}[1])\text{-}\mbox{Cat} \ar@<1ex>[u]^{L_{cat}}\,.
}
$$
\begin{remark}\label{glob3}
Recall from \cite[4.4]{Shipley} that the ring map
$$ \phi: \mbox{Sym}(\mathbb{Z}[1]) \longrightarrow \cn$$ 
is the identity in degree zero. This implies that we have the following commutative square 
$$
\xymatrix{
s\mathbf{Ab}\text{-}\mbox{Cat} \ar[d]_N &&  \mathsf{Sp}^{\Sigma}(s\mathbf{Ab}, \widetilde{\mathbb{Z}}(S^1))\text{-}\mbox{Cat} \ar[ll]_-{Ev_0} \ar[d]^{\phi^{\ast}N} \\
\mathsf{dgcat}_{\geq 0} &&  \mathsf{Sp}^{\Sigma}(ch_{\geq 0}, \mathbb{Z}[1])\text{-}\mbox{Cat} \ar[ll]^-{Ev_0}\,.
}
$$
Finally, since the normalization functor (see section $5.2$ of~\cite{DGScat})
$$ N: s\mathbf{Ab}\text{-}\mbox{Cat} \longrightarrow \mathsf{dgcat}_{\geq 0}$$
satisfies the `reflexion' condition $R)$ of theorem~\ref{chave}, both conditions of theorem~\ref{chave} are satisfied and so the previous adjunction $(L_{cat}, \phi^{\ast}N)$ is a Quillen equivalence.
\end{remark}

\section{General Spectral algebra}\label{chapter6}
Let $R$ be a commutative symmetric ring spectrum of pointed simplicial sets (see \cite[I-1.3]{Schwede}). In this chapter, we construct a Quillen model structure on the category of small categories enriched over $R$-modules (see theorem~\ref{stable1}).

\begin{notation}
We denote by $(R\text{-}\mbox{Mod}, -\underset{R}{\wedge}-,R)$ the symmetric monoidal category of right $R$-modules (see \cite[I-1.5]{Schwede}). 
\end{notation}

Let us now recall some classical results of spectral algebra.
\begin{theorem}{\cite[III-3.2]{Schwede}}\label{model}
The category $R\text{-}\mbox{Mod}$ carries the following model structures:
\begin{itemize}
\item[-] The {\em projective level model structure}, in which the weak equivalences (resp. fibrations) are those morphisms of $R$-modules which are projective level equivalences (resp. projective level fibrations) on the underlying symmetric spectra.
\item[-] The {\em projective stable model structure}, in which the weak equivalences (resp. fibrations) are those morphisms of $R$-modules which are projective stable equivalences (resp. projective stable fibrations) on the underlying symmetric spectra.
\end{itemize}
Moreover these model structures are proper, simplicial, cofibrantly generated, monoidal (with respect to the smash product over $R$) and satisfy the monoid axiom.
\end{theorem}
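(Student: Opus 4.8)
The plan is to obtain both model structures by transfer (right induction) along the free--forgetful adjunction
$$
\xymatrix{
R\text{-}\mbox{Mod} \ar@<1ex>[d]^{U} \\
\mathsf{Sp}^{\Sigma} \ar@<1ex>[u]^{-\wedge R}\,,
}
$$
where $U$ is the forgetful functor and $-\wedge R$ sends a symmetric spectrum to the free right $R$-module on it. Here $\mathsf{Sp}^{\Sigma}$ denotes symmetric spectra of pointed simplicial sets, equipped with the projective level (resp. projective stable) model structure, which by \cite{Schwede} is cofibrantly generated, proper, simplicial, monoidal and satisfies the monoid axiom. One declares a morphism of $R$-modules to be a weak equivalence (resp. fibration) exactly when $U$ carries it to one, and takes as generating (resp. generating trivial) cofibrations the images under $-\wedge R$ of the generating (resp. generating trivial) cofibrations of $\mathsf{Sp}^{\Sigma}$. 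The whole argument then runs identically in the level and the stable case.

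First I would check the hypotheses of the lifting/transfer theorem of Schwede--Shipley \cite[4.1]{SS}. The category $R\text{-}\mbox{Mod}$ is bicomplete: since $\mathsf{Sp}^{\Sigma}$ is closed symmetric monoidal, $U$ creates all limits, and because $-\wedge R$ preserves colimits the functor $U$ creates all colimits as well. Smallness of the domains of the generating and generating trivial cofibrations of $R\text{-}\mbox{Mod}$ follows from smallness of the corresponding domains in $\mathsf{Sp}^{\Sigma}$ together with the fact that the left adjoint $-\wedge R$ preserves small objects. This reduces the construction to the single \emph{acyclicity condition}: every relative cell complex built from the maps $j\wedge R$, with $j$ a generating trivial cofibration of $\mathsf{Sp}^{\Sigma}$, must be sent by $U$ to a weak equivalence.

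The main obstacle, and the only genuinely non-formal step, is precisely this acyclicity condition, and it is exactly here that the monoid axiom enters. A map of the form $j\wedge R$, with $j$ a trivial cofibration, lies in the class of maps $(\mbox{trivial cofibration})\wedge(\mbox{object})$ appearing in the monoid axiom (taking the object to be $R$). Since that class, together with its pushouts and transfinite compositions, consists of weak equivalences by the monoid axiom for $\mathsf{Sp}^{\Sigma}$ \cite[3.3]{SS}, every relative $(J\wedge R)$-cell complex is a weak equivalence on underlying spectra. With the acyclicity condition in hand, the transfer theorem produces the desired cofibrantly generated model structure, the stable case using the stable monoid axiom.

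It remains to record the extra properties. Right properness is immediate, since $U$ creates pullbacks and detects both fibrations and weak equivalences, so it is inherited from $\mathsf{Sp}^{\Sigma}$; left properness follows once one observes that cofibrations of $R$-modules are carried by $U$ to cofibrations of spectra, again reducing to left properness of $\mathsf{Sp}^{\Sigma}$. The simplicial structure passes verbatim to $R$-modules because $\mathsf{Sp}^{\Sigma}$ is simplicial and the tensoring, cotensoring and mapping spaces are all computed on underlying spectra. Finally, when $R$ is commutative, $R\text{-}\mbox{Mod}$ is symmetric monoidal under $-\underset{R}{\wedge}-$, and both the pushout--product axiom and the monoid axiom for $R\text{-}\mbox{Mod}$ are deduced from the corresponding axioms for $\mathsf{Sp}^{\Sigma}$ by resolving $\underset{R}{\wedge}$ through the free functor and invoking the monoid axiom once more, exactly as in \cite[4.1]{SS}.
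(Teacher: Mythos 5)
Your proposal is correct and is essentially the argument behind the cited result: the paper itself gives no proof of this theorem (it is quoted from \cite[III-3.2]{Schwede}), and the proof there, as in \cite[4.1]{SS}, is exactly the transfer along the free--forgetful adjunction with the acyclicity condition supplied by the monoid axiom. The one step I would not let stand as written is your justification of left properness: for a general commutative symmetric ring spectrum $R$ (e.g.\ $H\mathbb{Z}$, which is not projectively cofibrant), the forgetful functor does \emph{not} carry cofibrations of $R\text{-}\mbox{Mod}$ to projective cofibrations of $\mathsf{Sp}^{\Sigma}$; it only carries them to levelwise monomorphisms (flat cofibrations), since the generating maps $i\wedge R$ are levelwise injective but need not be projective cofibrations unless $R$ is cofibrant. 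Left properness still follows, because cobase change along levelwise monomorphisms preserves level equivalences (gluing lemma) and stable equivalences in $\mathsf{Sp}^{\Sigma}$, but that weaker statement is what the argument actually requires.
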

Now, let $f:P \rightarrow R$ be a morphism (\cite[I-1.4]{Schwede}) of commutative symmetric ring spectra. 
\begin{remark}\label{restr/ext}
We have at our disposal a restriction/extension of scalars adjunction
$$
\xymatrix{
R\text{-}\mbox{Mod} \ar@<1ex>[d]^{f^{\ast}}\\
P\text{-}\mbox{Mod}\ar@<1ex>[u]^{f_!} \,.
}
$$
\end{remark}
Given a $R$-module $M$, we define a $P$-module $f^{\ast}(M)$ as the same underlying symmetric spectrum as $M$ and with the $P$-action given by the composite
$$(f^{\ast}M)\wedge P = M\wedge P \stackrel{Id\wedge f}{\longrightarrow} M\wedge R \stackrel{\alpha}{\longrightarrow} M\,,$$
where $\alpha$ denotes the $R$-action on $M$.

The left adjoint functor $f_!$ takes an $P$-module $N$ to the $R$-module $N\underset{P}{\wedge}R$ with the $R$-action given by
$$f_!N \wedge R = N\underset{P}{\wedge}R \wedge R \stackrel{Id\wedge \mu}{\longrightarrow} N \underset{P}{\wedge} R\,,$$
where $\mu$ denotes the multiplication of $R$.

\begin{theorem}{\cite[III-3.4]{Schwede}}
The previous adjuncton $(f_!, f^{\ast})$ is a Quillen adjunction with respect to the model structures of theorem~\ref{model}.
\end{theorem}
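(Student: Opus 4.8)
The plan is to invoke Dugger's criterion (Proposition~\ref{crit}), which reduces the verification that $(f_!, f^{\ast})$ is a Quillen adjunction to showing that the right adjoint $f^{\ast}$ preserves trivial fibrations and fibrations between fibrant objects. In fact I would prove the stronger and cleaner statement that $f^{\ast}$ preserves \emph{all} fibrations and \emph{all} weak equivalences, from which the hypotheses of Proposition~\ref{crit} follow at once, simultaneously for the projective level and the projective stable model structures of Theorem~\ref{model}. The key observation, read off directly from the construction recalled above, is that restriction of scalars leaves the underlying symmetric spectrum unchanged: for an $R$-module $M$, the $P$-module $f^{\ast}(M)$ has the very same underlying symmetric spectrum, only its action being modified along $f$. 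Hence $f^{\ast}$ commutes with the forgetful functors from $R\text{-}\mbox{Mod}$ and $P\text{-}\mbox{Mod}$ to symmetric spectra, and for a morphism $g$ of $R$-modules the morphism $f^{\ast}(g)$ has exactly the same underlying map of symmetric spectra as $g$.

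I would then combine this with the defining feature of the model structures of Theorem~\ref{model}: in both the level and the stable structures, on $R\text{-}\mbox{Mod}$ as well as on $P\text{-}\mbox{Mod}$, a morphism is a weak equivalence (resp. a fibration) precisely when its underlying map of symmetric spectra is a projective level, resp. stable, equivalence (resp. fibration). Since $f^{\ast}$ does not alter this underlying map, it sends weak equivalences to weak equivalences and fibrations to fibrations; in particular it preserves trivial fibrations and fibrations between fibrant objects. Applying Proposition~\ref{crit} to the adjunction $(f_!, f^{\ast})$ then yields that it is a Quillen adjunction, by one and the same argument for the level and for the stable case.

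I do not anticipate a genuine obstacle: the whole argument rests on the fact that $f^{\ast}$ is the identity on underlying symmetric spectra, so the only point requiring any care is to confirm that the forgetful functors to symmetric spectra are compatible with $f^{\ast}$, which is immediate from the definition of $f^{\ast}(M)$ as $M$ equipped with the $P$-action $M\wedge P \to M$. The mild redundancy of treating both model structures at once is harmless, precisely because in each case the weak equivalences and fibrations are detected on the underlying symmetric spectra.
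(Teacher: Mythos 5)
Your argument is correct, but note that the paper offers no proof of this statement at all: it is quoted verbatim from Schwede \cite[III-3.4]{Schwede} as an external result. Your proof is the standard one and is complete as written: since $f^{\ast}(M)$ is by definition $M$ with unchanged underlying symmetric spectrum and only the action twisted along $f$, and since in both model structures of theorem~\ref{model} the weak equivalences and fibrations are \emph{defined} as those morphisms whose underlying maps of symmetric spectra are (projective level or stable) equivalences and fibrations, the functor $f^{\ast}$ preserves all fibrations and all weak equivalences in each of the two structures. One small remark: at that point you do not need the refinement of proposition~\ref{crit} (preservation of trivial fibrations and of fibrations between fibrant objects); preservation of all fibrations and all trivial fibrations by the right adjoint is already the textbook characterization of a Quillen adjunction, so the conclusion follows directly. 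This is a harmless detour rather than a gap.
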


\begin{remark}
In the adjunction $(f_!,f^{\ast})$, the functor $f_!$ is strong monoidal and the functor $f^{\ast}$ is lax monoidal (see definition~\ref{lax}).
\end{remark}
We are now interested in the particular case where $P$ is the sphere symmetric ring spectrum $\mathbb{S}$ (see~\cite[I-2.1]{Schwede}) and $f=i$ the unique morphism of ring spectra. We have the restriction/extension of scalars adjunction
$$
\xymatrix{
R\text{-}\mbox{Mod} \ar@<1ex>[d]^{i^{\ast}}\\
\mathsf{Sp}^{\Sigma} \ar@<1ex>[u]^{i_!}\,,
}
$$
where $\mathsf{Sp}^{\Sigma}$ denotes the category of symmetric spectra~\cite{HSS}.
\begin{notation}
In what follows and to simply the notation, we denote by $R\text{-}\mbox{Cat}$ the category $(R\text{-}\mbox{Mod})\text{-}\mbox{Cat}$ (see notation~\ref{catenrich}). In particular a $(R\text{-}\mbox{Mod})$-category will be denoted by $R$-category and a $(R\text{-}\mbox{Mod})$-functor by $R$-functor.
\end{notation}
Notice that by remark~\ref{corleftadj}, the previous adjunction $(i_!, i^{\ast})$ induces the adjunction
$$
\xymatrix{
R\text{-}\mbox{Cat} \ar@<1ex>[d]^{i^{\ast}} \\
\mathsf{Sp}^{\Sigma}\text{-}\mbox{Cat} \ar@<1ex>[u]^{i_!} \,,
}
$$
where $\mathsf{Sp}^{\Sigma}\text{-}\mbox{Cat}$ denotes the category of spectral categories (see~\cite[3.3]{Spectral}).

\subsection{Levelwise quasi-equivalences}
In this section, we will construct a cofibrantly generated Quillen model structure on $R\text{-}\mbox{Cat}$, whose weak equivalences are as follows:
\begin{definition}\label{deflevelR}
A $R$-functor $F:\ca \rightarrow \cb$ is a {\em levelwise quasi-equivalence} if the restricted spectral functor $i^{\ast}(F):i^{\ast}(\ca) \rightarrow i^{\ast}(\cb)$ is a levelwise quasi-equivalence on $\mathsf{Sp}^{\Sigma}\text{-}\mbox{Cat}$ (see \cite[4.1]{Spectral}).
\end{definition}
\begin{notation}
We denote by $\cw_L$ the class of levelwise quasi-equivalences on $R\text{-}\mbox{Cat}$.
\end{notation}
\begin{remark}
Recall from \cite[4.8]{Spectral} that the category $\mathsf{Sp}^{\Sigma}\text{-}\mbox{Cat}$ carries a cofibrantly generated Quillen model structure whose weak equivalences are precisely the levelwise quasi-equivalences.
\end{remark}
\begin{theorem}\label{levelR}
If we let $\cm$ be the category $\mathsf{Sp}^{\Sigma}\text{-}\mbox{Cat}$, endowed with the model structure of theorem \cite[4.8]{Spectral}, and $\cn$ be the category $R\text{-}\mbox{Cat}$, then the conditions of the lifting theorem~\cite[11.3.2]{Hirschhorn} are satisfied. Thus, the category $R\text{-}\mbox{Cat}$ admits a cofibrantly generated Quillen model structure whose weak equivalences are the levelwise quasi-equivalences.
\end{theorem}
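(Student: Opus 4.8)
The plan is to produce the model structure by transferring the levelwise model structure on $\mathsf{Sp}^{\Sigma}\text{-}\mbox{Cat}$ of \cite[4.8]{Spectral} across the adjunction $(i_!,i^{\ast})$, via the lifting theorem \cite[11.3.2]{Hirschhorn}. Concretely, I take for $\cm$ the category $\mathsf{Sp}^{\Sigma}\text{-}\mbox{Cat}$ and for $\cn$ the category $R\text{-}\mbox{Cat}$, with $F=i_!$ the left adjoint and $U=i^{\ast}$ the right adjoint. The category $R\text{-}\mbox{Cat}$ is complete and cocomplete because $R\text{-}\mbox{Mod}$ is a bicomplete closed symmetric monoidal category. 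The candidate generating cofibrations, resp. generating trivial cofibrations, are the images $i_!(I_{\mathsf{Sp}^{\Sigma}\text{-}\mbox{Cat}})$, resp. $i_!(J_{\mathsf{Sp}^{\Sigma}\text{-}\mbox{Cat}})$; and by definition~\ref{deflevelR} the class $\cw_L$ on $R\text{-}\mbox{Cat}$ consists precisely of those $R$-functors $F$ for which $i^{\ast}(F)$ is a levelwise quasi-equivalence, which is exactly the class that the lifting theorem declares to be the weak equivalences. Two properties of $i^{\ast}$ will be used throughout: it leaves the underlying symmetric spectrum of each $\mbox{Hom}$-object unchanged (so it creates and reflects level equivalences), and it preserves filtered colimits.

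For the first hypothesis of \cite[11.3.2]{Hirschhorn} I must check that the domains of $i_!(I_{\mathsf{Sp}^{\Sigma}\text{-}\mbox{Cat}})$ and of $i_!(J_{\mathsf{Sp}^{\Sigma}\text{-}\mbox{Cat}})$ are small relative to the respective cell complexes. This is the standard transfer-of-smallness argument: for a filtered system $\{\cb_\beta\}$ in $R\text{-}\mbox{Cat}$ and a domain $A$ of a generating (trivial) cofibration of $\mathsf{Sp}^{\Sigma}\text{-}\mbox{Cat}$, the adjunction gives $\mbox{Hom}(i_!A,\,\mathrm{colim}\,\cb_\beta)\cong\mbox{Hom}(A,\,i^{\ast}\mathrm{colim}\,\cb_\beta)$, and since $i^{\ast}$ preserves filtered colimits this equals $\mbox{Hom}(A,\,\mathrm{colim}\,i^{\ast}\cb_\beta)\cong\mathrm{colim}\,\mbox{Hom}(A,\,i^{\ast}\cb_\beta)\cong\mathrm{colim}\,\mbox{Hom}(i_!A,\,\cb_\beta)$, the middle isomorphism being the smallness of $A$ in $\mathsf{Sp}^{\Sigma}\text{-}\mbox{Cat}$ guaranteed by \cite[4.8]{Spectral}. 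Hence the transferred domains are small and both sets permit the small object argument.

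The substance of the proof is the second hypothesis: $i^{\ast}$ must send every relative $i_!(J_{\mathsf{Sp}^{\Sigma}\text{-}\mbox{Cat}})$-cell complex to a levelwise quasi-equivalence. Since $\cw_L$ is closed under transfinite composition it suffices to treat a single pushout along $i_!(j)$ with $j\in J_{\mathsf{Sp}^{\Sigma}\text{-}\mbox{Cat}}$; because $i^{\ast}$ is merely a right adjoint it need not preserve such a pushout, so I analyze the pushout directly in $R\text{-}\mbox{Cat}$ and then read off its underlying spectral data through $i^{\ast}$. As in definition~\ref{gencofCcat}, the set $J_{\mathsf{Sp}^{\Sigma}\text{-}\mbox{Cat}}$ splits into a family $U(J_{\mathsf{Sp}^{\Sigma}})$ affecting a single $\mbox{Hom}$-object and an object-changing family. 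For the first family, strong monoidality of $i_!$ yields $i_!\circ U=U\circ i_!$, so the pushout runs along $U(i_!(j))$ with $i_!(j)$ a level trivial cofibration of $R$-modules; since $R\text{-}\mbox{Mod}$ satisfies the monoid axiom (theorem~\ref{model}), the pushout argument of lemma~\ref{cell} shows the induced map on the single $\mbox{Hom}$-object is a level equivalence, while the object set is unchanged so that $L2')$ holds automatically, and applying $i^{\ast}$ preserves these underlying level equivalences. For the object-changing family the analysis of proposition~\ref{cell1} transports verbatim: the pushout is a full-$R$-subcategory-type inclusion yielding $L1)$, and the co-cartesian square of object sets forces $[i^{\ast}(-)]$ to be essentially surjective, i.e.\ $L2')$.

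The main obstacle is exactly this acyclicity step, and the difficulty is twofold. First, since $i^{\ast}$ is a right adjoint it does not commute with the colimits defining the cell complexes, so no computation can be transported formally from $\mathsf{Sp}^{\Sigma}\text{-}\mbox{Cat}$; one must instead verify $L1)$ and $L2')$ on the $R$-enriched pushouts themselves. Second, controlling the $\mbox{Hom}$-objects of a pushout of enriched categories is not formal and depends on the monoid axiom for $R\text{-}\mbox{Mod}$ (theorem~\ref{model}) together with the identity $i_!\circ U=U\circ i_!$ furnished by strong monoidality of extension of scalars. Once these are established, the remaining verifications of \cite[11.3.2]{Hirschhorn} are routine, and the theorem follows.
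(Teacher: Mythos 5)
Your proposal is correct and follows essentially the same route as the paper: transfer along $(i_!,i^{\ast})$ via the lifting theorem, with smallness obtained from the adjunction and the fact that $i^{\ast}$ preserves filtered colimits, and acyclicity checked by splitting the generating trivial cofibrations into the Hom-object family (handled via $i_!\circ U=U\circ i_!$, the monoid axiom for $R\text{-}\mbox{Mod}$, and the pushout analysis of proposition~\ref{clef}) and the object-adjoining family (handled by the argument of proposition~\ref{cell1} together with the co-cartesian square of object sets). This matches the paper's lemmas~\ref{celll} and~\ref{cell2} in both decomposition and substance.
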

\begin{remark}\label{Rprop2}
Since the Quillen model structure on $\mathsf{Sp}^{\Sigma}\text{-}\mbox{Cat}$ is right proper (see~\cite[4.19]{Spectral}), the same holds for the model structure on $R\text{-}\mbox{Cat}$ given by theorem~\ref{levelR}.
\end{remark}
\begin{remark}\label{Rprop3}
By remark~\ref{Rprop2} and corollary~\cite[4.16]{Spectral}, a $R$-category $\ca$ is fibrant if and only if for all objects $x,y \in \ca$ and $n\geq 0$, the simplicial set $i^{\ast}(\ca)(x,y)_n$ is fibrant.
\end{remark}
\subsection{Proof of theorem~\ref{levelR}}
We start by observing that the category $R\text{-}\mbox{Cat}$ is complete and cocomplete. Since the restriction functor 
$$i^{\ast} : R\text{-}\mbox{Cat} \longrightarrow \mathsf{Sp}^{\Sigma}\text{-}\mbox{Cat}$$
preserves filtered colimits and the domains and codomains of the elements of the generating (trivial) cofibrations in $\mathsf{Sp}^{\Sigma}\text{-}\mbox{Cat}$ are small (see sections $4$ and $5$ of~\cite{Spectral}), condition $(1)$ of the lifting theorem~\cite[11.3.2]{Hirschhorn} is verified.

We now prove condition $(2)$. Recall from \cite[4.5]{Spectral}, that the set $J$ of generating trivial cofibrations in $\mathsf{Sp}^{\Sigma}\text{-}\mbox{Cat}$ consists of a (disjoint) union $J'\cup J''$.

\begin{lemma}\label{celll}
$i_!(J')\text{-}\mbox{cell} \subseteq \cw_L$.
\end{lemma}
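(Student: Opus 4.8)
The statement to prove is Lemma~\ref{celll}, that $i_!(J')\text{-}\mbox{cell} \subseteq \cw_L$. Recall from Definition~\ref{gencofCcat} (applied in the spectral setting of~\cite{Spectral}) that $J'$ is the set of spectral functors obtained by applying the enrichment functor $U$ to the generating trivial cofibrations $J_{\mathsf{Sp}^{\Sigma}}$ of the projective stable model structure on symmetric spectra. Correspondingly $i_!(J')$ consists of $R$-functors of the form $U(i_!(j))$, where $j$ ranges over $J_{\mathsf{Sp}^{\Sigma}}$ and $i_!$ is the extension-of-scalars functor. The plan is to reduce, exactly as in the proof of Lemma~\ref{cell}, to the case of a single pushout along one such generator and then check directly that the resulting $R$-functor is a levelwise quasi-equivalence.

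**The reduction step.** First I would invoke the stability of $\cw_L$ under transfinite compositions, so that it suffices to treat a single pushout. Concretely, I would consider the pushout square
$$
\xymatrix{
U(i_!(X)) \ar[d]_{U(i_!(j))} \ar[r]^-R \ar@{}[dr]|{\lrcorner} & \ca \ar[d]^P \\
U(i_!(Y)) \ar[r] & \cb\,,
}
$$
where $j: X \rightarrow Y$ lies in $J_{\mathsf{Sp}^{\Sigma}}$, i.e. $j$ is a generating trivial cofibration of the projective stable model structure on $\mathsf{Sp}^{\Sigma}$. The goal is to show $P \in \cw_L$. By Definition~\ref{deflevelR}, $P \in \cw_L$ means precisely that the restricted spectral functor $i^{\ast}(P)$ is a levelwise quasi-equivalence in $\mathsf{Sp}^{\Sigma}\text{-}\mbox{Cat}$, so the whole question translates back to the spectral category setting of~\cite{Spectral}.

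**Verifying the two conditions.** Since $P$ induces the identity map on objects (pushouts along $U$-functors attach no new objects and are computed Hom-wise), condition $L2')$ is automatic, just as in Lemma~\ref{cell}. It remains to verify condition $L1)$, namely that each $P(x,y): \ca(x,y) \rightarrow \cb(x,y)$ is a stable equivalence. Here the key input is the monoid-axiom argument: because $j$ is a trivial cofibration in the monoidal model category $\mathsf{Sp}^{\Sigma}$ (which satisfies the monoid axiom by Theorem~\ref{model}), the adjoint $i_!(j)$ is a trivial cofibration in $R\text{-}\mbox{Mod}$, again by Theorem~\ref{model}, and in particular $i^{\ast}(i_!(j))$ is a stable equivalence of symmetric spectra. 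One then applies the spectral analogue of Proposition~\ref{clef} (the Hom-wise pushout behaviour established in~\cite{Spectral}) to conclude that each map $P(x,y)$, being a pushout-corner map built from the trivial cofibration $i_!(j)$ under the monoid axiom, is itself a stable equivalence. This gives condition $L1)$ for $i^{\ast}(P)$.

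**The main obstacle.** I expect the genuine difficulty to be the Hom-object computation underlying condition $L1)$: one must track how the free enrichment functor $U$ interacts with the pushout so that the induced maps on Hom-spectra are expressed as iterated pushouts of smash products with $i_!(j)$, and then show these remain stable equivalences. This is exactly where the monoid axiom for $R\text{-}\mbox{Mod}$ is indispensable, since without it the smashing of a stable equivalence against an arbitrary object need not remain a weak equivalence. The remaining bookkeeping (transfinite composition, identity on objects) is routine; the monoid-axiom-driven Hom-wise estimate is the crux, and it mirrors the corresponding step already carried out for $\cw_L$ in the $\spc$ setting.
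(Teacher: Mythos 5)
Your overall architecture matches the paper's: reduce by stability of $\cw_L$ under transfinite composition to a single pushout along $U(i_!(j))$, observe that $L2')$ is automatic because no objects are added, identify $i_!U(-)$ with $U(i_!(-))$, and invoke the monoid-axiom/pushout argument of proposition~\ref{clef} to control the Hom-objects. However, there is a genuine error in which model structure you run this argument in, and it breaks the final step. The lemma lives in the proof of theorem~\ref{levelR}, which lifts the \emph{levelwise} model structure on $\mathsf{Sp}^{\Sigma}\text{-}\mbox{Cat}$; accordingly $J'$ is obtained by applying $U$ to the generating trivial cofibrations $F_m\Lambda[k,n]_+ \rightarrow F_m\Delta[n]_+$ of the projective \emph{level} model structure on $\mathsf{Sp}^{\Sigma}$, not of the projective stable one, and condition $L1)$ of a levelwise quasi-equivalence (definition~\ref{deflevelR} via \cite[4.1]{Spectral}) demands that each $P(x,y)$ be a \emph{level} equivalence. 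You instead take $j$ to be a stable generating trivial cofibration, push $i_!(j)$ into the projective stable structure on $R\text{-}\mbox{Mod}$, and conclude that each $P(x,y)$ is a stable equivalence. A stable equivalence of symmetric spectra is strictly weaker than a level equivalence, so the sentence ``This gives condition $L1)$ for $i^{\ast}(P)$'' does not follow: what you have proved is that $P$ satisfies the Hom-wise part of being a \emph{stable} quasi-equivalence, which is not the statement of the lemma.

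The fix is exactly the paper's route: note that $i_!(F_m\Lambda[k,n]_+) \rightarrow i_!(F_m\Delta[n]_+)$ is a trivial cofibration in the projective \emph{level} model structure on $R\text{-}\mbox{Mod}$ (theorem~\ref{model}), which is monoidal and satisfies the monoid axiom, and then apply proposition~\ref{clef} in that structure to conclude that the maps $P(x,y)$ are level equivalences. Everything else in your write-up (the reduction, the identification of objects, condition $L2')$) is fine and agrees with the paper.
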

\begin{proof}
Since the class $\cw_L$ is stable under transfinite compositions it is enough to prove the following: let $$S: i_!U(F_m\Lambda[k,n]_+) \longrightarrow \ca$$ be a $R$-functor and consider the following pushout in $R\text{-}\mbox{Cat}$
$$
\xymatrix{
i_!U(F_m\Lambda[k,n]_+) \ar[r]^-S \ar[d]_-{i_!(A_{m,k,n})} \ar@{}[dr]|{\lrcorner} & \ca \ar[d]^P \\
i_!U(F_m\Delta[n]_+) \ar[r] & \cb \,.
}
$$
We need to show that $P$ belongs to $\cw_L$. Observe that the $R$-functor $i_!(A_{m,k,n})$ identifies with the $R$-functor
$$ U(i_!(F_m\Lambda[k,n]_+)) \longrightarrow U(i_!(F_m\Delta[n]_+))\,.$$
Since the morphisms 
$$ i_!(F_m\Lambda[k,n]_+) \longrightarrow i_!(F_m\Delta[n]_+)$$
are trivial cofibrations in the projective level model structure of theorem~\ref{model}, proposition~\ref{clef} and theorem~\ref{model} imply that the spectral functor $i^{\ast}(P)$ satisfies condition $L1)$. Since $P$ induces the identity map on objects, the spectral functor $i^{\ast}(P)$ satisfies condition $L2')$ and so $P$ belongs to $\cw_L$.
\end{proof}
\begin{lemma}\label{cell2}
$i_!(J'')\text{-}\mbox{cell}\subseteq \cw_L$.
\end{lemma}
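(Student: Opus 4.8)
The plan is to mirror the argument of Proposition~\ref{cell1}, now exploiting that the extension-of-scalars functor $i_!$ is strong monoidal and left Quillen while its right adjoint $i^{\ast}$ leaves the underlying symmetric spectra and the object sets unchanged. Since $\cw_L$ is stable under transfinite compositions (\cite[10.2.2]{Hirschhorn}), it suffices to treat a single pushout
$$
\xymatrix{
i_!(\cg) \ar[d]_{i_!(V)} \ar[r]^-R \ar@{}[dr]|{\lrcorner} & \ca \ar[d]^P \\
i_!(\cd) \ar[r] & \cb \,,
}
$$
where $\cg \stackrel{V}{\rightarrow} \cd$ belongs to the set $J''$ of generating trivial cofibrations of $\mathsf{Sp}^{\Sigma}\text{-}\mbox{Cat}$, and to show that the resulting $R$-functor $P$ lies in $\cw_L$; equivalently, that the spectral functor $i^{\ast}(P)$ satisfies conditions $L1)$ and $L2')$.

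For condition $L1)$ I would recall (from section~$4$ of~\cite{Spectral}) that $\cg$ has a single object $\{1\}$, that $\cd$ has objects $\{1,2\}$, and that the map $\cg(1)\rightarrow \cd(1)$ is a trivial cofibration. Factoring $i_!(V)$ as $i_!(\cg)\rightarrow i_!(\cd)(1)\hookrightarrow i_!(\cd)$ through the full $R$-subcategory on $\{1\}$, I form the iterated pushout exactly as in Proposition~\ref{cell1}. In the lower square, $\widetilde{\ca}$ is a full $R$-subcategory of $\cb$ by \cite[5.2]{Latch}, so the induced $R$-functor satisfies $L1)$. In the upper square, $i_!$ being left Quillen sends the trivial cofibration $\cg(1)\rightarrow\cd(1)$ to a trivial cofibration in $R\text{-}\mbox{Mod}$; rewriting $\widetilde{\ca}$ as a pushout in the fixed-object-set category $(R\text{-}\mbox{Mod})^{\co}\text{-}\mbox{Cat}$ (where $\co$ is the object set of $\ca$) and using that $R\text{-}\mbox{Mod}$ is monoidal and satisfies the monoid axiom (theorem~\ref{model}), proposition~\ref{clef} shows the remaining map satisfies $L1)$ as well. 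Since $i^{\ast}$ preserves underlying spectra, composing yields that $i^{\ast}(P)$ satisfies $L1)$.

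For condition $L2')$ I would first note that the strong monoidal adjunction $(i_!,i^{\ast})$ with $i_!(\mathbb{S})=R$ gives a natural isomorphism $[R,\ca(x,y)]_{R\text{-}\mbox{Mod}}\cong [\mathbb{S},i^{\ast}(\ca)(x,y)]_{\mathsf{Sp}^{\Sigma}}$, hence a natural identification $[\ca]\simeq [i^{\ast}(\ca)]$ (the same computation underlying Corollary~\ref{[]2}). It therefore suffices to show that $[P]:[\ca]\rightarrow[\cb]$ is essentially surjective. Passing to underlying categories produces a commutative square in $\mbox{Cat}$ whose restriction to object sets is a cocartesian square in $\mathbf{Set}$; since $V\in J''$ makes the induced functor $[\cg]\rightarrow[\cd]$ essentially surjective, the diagram-chase of Proposition~\ref{cell1} forces $[P]$ to be essentially surjective, so $i^{\ast}(P)$ satisfies $L2')$ and $P\in\cw_L$.

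The main obstacle will be the $L1)$ verification, and specifically the bookkeeping needed to identify $\widetilde{\ca}$ with a pushout of hom-modules in $(R\text{-}\mbox{Mod})^{\co}\text{-}\mbox{Cat}$ so that the full-subcategory factorization interacts correctly with $i_!$; once that pushout is pinned down, the monoid-axiom input of proposition~\ref{clef} applies verbatim and the remainder of the argument is formal and parallel to Proposition~\ref{cell1}.
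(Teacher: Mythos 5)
Your proof follows the paper's argument for Lemma~\ref{cell2} essentially verbatim: the same reduction to a single pushout along an element of $i_!(J'')$, the same transcription of the $L1)$ argument from Proposition~\ref{cell1} (factorization through the full subcategory on the first object, \cite[5.2]{Latch}, and the monoid axiom via proposition~\ref{clef}), and the same object-level cocartesian square for $L2')$. The one point the paper makes explicit and you gloss over is that essential surjectivity of $[V]:[\cg]\rightarrow[\cd]$ must be transported to $[i_!(V)]$ by means of the unit $\cd \rightarrow i^{\ast}i_!(\cd)$ of the adjunction, since $[i_!(\cd)]$ is computed from $i^{\ast}i_!(\cd)$ rather than from $\cd$ itself; this is a one-line fix and does not affect the structure of your argument.
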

\begin{proof}
Since the class $\cw_L$ is stable under transfinite compositions, it is enough to prove the following: let $$S:i_!\underline{\mathbb{S}}=\underline{R} \longrightarrow \ca$$ be a $R$-functor (where $\underline{R}$ is the category with one object and endomorphism ring $R$) and consider the following pushout in $R\text{-}\mbox{cat}$
$$
\xymatrix{
\underline{R} \ar[r]^-S \ar[d]_-{i_!(A_{\ch})} \ar@{}[dr]|{\lrcorner} & \ca \ar[d]^P \\
i_!\Sigma^{\infty}(\ch_+) \ar[r] & \cb\,.
}
$$
We need to show that $P$ belongs to $\cw_L$. We start by showing that the spectral functor $i^{\ast}(P)$ satisfies condition $L1)$. Observe that the proof of condition $L1)$ is entirely analogous to the proof of condition $L1)$ in proposition~\ref{cell1}: simply replace the sets of $\mathsf{Sp}^{\Sigma}(\cc,K)$-functors $J'$ and $J''$ by the spectral functors $i_!(A_{m,n,k})$ and $i_!(A_{\ch})$ and the set $I$ (of generating cofibrations in $\mathsf{Sp}^{\Sigma}(\cc,K)\text{-}\mbox{Cat}$) by the set $i_!(I)$. 
 
We now show that the spectral functor $i^{\ast}(P)$ satisfies condition $L2')$. Notice that the existence of the spectral functor
$$ \Sigma^{\infty}(\ch_+) \longrightarrow i^{\ast}i_!\Sigma^{\infty}(\ch_+)\,,$$
given by the unit of the adjunction $(i_!, i^{\ast})$, implies that the left vertical spectral functor in the commutative square $$
\xymatrix{
i^{\ast} \underline{R} \ar[r]^-{i^{\ast}S} \ar[d]_{i^{\ast}i_!(A_{\ch})} & i^{\ast}(\ca) \ar[d]^{i^{\ast}(P)} \\
i^{\ast}i_! \Sigma^{\infty}(\ch_+) \ar[r] & i^{\ast}(\cb)\,,
}
$$
satisfies condition $L2')$.
Notice also, that if we restrict ourselves to the objects of each category ($\mbox{obj}(-)$) in the previous diagram, we obtain a co-cartesian square
$$
\xymatrix{
\mbox{obj}(i^{\ast} \underline{R}) \ar[r] \ar[d] \ar@{}[dr]|{\lrcorner} & \mbox{obj}(i^{\ast}(\ca) \ar[d] \\
\mbox{obj}(i^{\ast}i_! \Sigma^{\infty}(\ch_+)) \ar[r] & \mbox{obj}(i^{\ast}(\cb))
}
$$
in $\mathbf{Set}$. These facts clearly imply that $i^{\ast}(P)$ satisfies condition $L2')$ and so we conclude that $P$ belongs to $\cw_L$.
\end{proof}
Lemmas~\ref{celll} and \ref{cell2} imply condition $(2)$ of theorem~\cite[11.3.2]{Hirschhorn} and so theorem~\ref{levelR} is proven.
\begin{lemma}\label{U}
The functor
$$ U: R\text{-}\mbox{Mod} \longrightarrow R\text{-}\mbox{Cat}\,\,\,\,\,\,(\mbox{see} \,\,\ref{funcU})$$
sends projective cofibrations to cofibrations.
\end{lemma}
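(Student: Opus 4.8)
The plan is to follow verbatim the strategy of Lemma~\ref{U1}. Since the Quillen model structure on $R\text{-}\mbox{Cat}$ of theorem~\ref{levelR} is cofibrantly generated, every cofibration is a retract of a (possibly transfinite) composition of pushouts along the generating cofibrations. The functor $U$ of~\ref{funcU} is a left adjoint, hence preserves colimits, and it clearly preserves retracts; therefore it suffices to prove that $U$ carries each generating cofibration of the projective model structure on $R\text{-}\mbox{Mod}$ to a cofibration of $R\text{-}\mbox{Cat}$.

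First I would identify the generating cofibrations on both sides. By theorem~\ref{model} the projective model structure on $R\text{-}\mbox{Mod}$ is obtained from $\mathsf{Sp}^{\Sigma}$ by extension of scalars along $i$, so its generating cofibrations are exactly the morphisms $i_!(f)$ with $f \in I_{\mathsf{Sp}^{\Sigma}}$ a generating cofibration of $\mathsf{Sp}^{\Sigma}$. Likewise, by construction in theorem~\ref{levelR} the model structure on $R\text{-}\mbox{Cat}$ is lifted from $\mathsf{Sp}^{\Sigma}\text{-}\mbox{Cat}$ along $i_!$, so the functor $i_!: \mathsf{Sp}^{\Sigma}\text{-}\mbox{Cat} \rightarrow R\text{-}\mbox{Cat}$ is left Quillen and in particular sends the generating cofibrations of $\mathsf{Sp}^{\Sigma}\text{-}\mbox{Cat}$ to (generating) cofibrations of $R\text{-}\mbox{Cat}$. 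Recall finally that, exactly as in definition~\ref{defc}, the generating cofibrations of $\mathsf{Sp}^{\Sigma}\text{-}\mbox{Cat}$ contain the spectral functors $U(f)$ with $f \in I_{\mathsf{Sp}^{\Sigma}}$ (see section~$4$ of~\cite{Spectral}).

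The key point is then the commutation relation $U \circ i_! = i_! \circ U$ between the module-level and category-level extension-of-scalars functors. This holds because $i_!$ is strong monoidal: the two-object construction $U$ sends an object $M$ to the enriched category whose only non-unit $\mbox{Hom}$-object is $M$, and a strong monoidal left adjoint commutes with this construction (this is precisely the compatibility underlying the induced adjunction of remark~\ref{corleftadj}). Granting this, for every generating cofibration $i_!(f)$ of $R\text{-}\mbox{Mod}$ we obtain $U(i_!(f)) = i_!(U(f))$, and since $U(f)$ is a generating cofibration of $\mathsf{Sp}^{\Sigma}\text{-}\mbox{Cat}$ and $i_!$ is left Quillen, $i_!(U(f))$ is a cofibration of $R\text{-}\mbox{Cat}$. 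Combining this with the preservation of colimits and retracts by $U$ finishes the proof. The only genuinely delicate step is the verification of the commutation $U \circ i_! = i_! \circ U$, which must be checked not merely on the $\mbox{Hom}$-object but compatibly with the units (using $i_!(\mathbb{S}) = R$) and the trivial composition of the two-object categories; everything else is a formal transcription of Lemma~\ref{U1}.
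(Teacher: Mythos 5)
Your proof is correct and follows essentially the same route as the paper's: reduce to the generating cofibrations using that $U$ preserves retracts and colimits, then check that $U$ sends generating projective cofibrations to (generating) cofibrations. The paper simply asserts this last step, whereas you justify it via the identification $U\circ i_! = i_!\circ U$ coming from $i_!$ being strong monoidal (remark~\ref{corleftadj}); this is a worthwhile elaboration but not a different argument.
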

\begin{proof}
The model structure of theorem~\ref{levelR} is cofibrantly generated and so any cofibrant object in $R\text{-}\mbox{Cat}$ is a rectract of a (possibly infinite) composition of pushouts along the generating cofibrations. Since the functor $U$ preserves retractions, colimits and send the generating projective cofibrations to (generating) cofibrations the lemma is proven.
\end{proof}

\subsection{Stable quasi-equivalences}
In this section, we consider the projective stable model structure on $R\text{-}\mbox{Mod}$ (see \ref{model}). We will construct a Quillen model structure on $\cR\text{-}\mbox{Cat}$, whose weak equivalences are as follows:
\begin{definition}\label{stableR}
A $R$-functor $F:\ca \rightarrow \cb$ is a {\em stable quasi-equivalence} if it is an weak equivalence in the sense of definition~\ref{we}.
\end{definition}
\begin{notation}
We denote by $\cw_S$ the class of stable quasi-equivalences.
\end{notation}
\begin{proposition}\label{coincide}
A $R$-functor $F:\ca \rightarrow \cb$ is a stable quasi-equivalence if and only if the restricted spectral functor $i^{\ast}(F): i^{\ast}(\ca) \rightarrow i^{\ast}(\cb)$ is a stable quasi-equivalence in $\mathsf{Sp}^{\Sigma}\text{-}\mbox{Cat}$ (see~\cite[5.1]{Spectral}).
\end{proposition}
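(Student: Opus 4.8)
The plan is to check the two defining conditions WE1) and WE2) of definition~\ref{we} one at a time, showing in each case that the condition holds for $F$ if and only if it holds for the restricted spectral functor $i^{\ast}(F)$; conjoining the two equivalences then yields the statement. The first condition is immediate: the restriction functor $i^{\ast}$ leaves the underlying symmetric spectrum untouched, so for all objects $x,y$ the spectral map $i^{\ast}(F)(x,y)$ is literally the underlying map of symmetric spectra of the $R$-module morphism $F(x,y)$. By theorem~\ref{model}, a morphism of $R$-modules is a weak equivalence for the projective stable model structure precisely when its underlying map of symmetric spectra is a stable equivalence. Hence $F(x,y)$ is a weak equivalence in $R\text{-}\mbox{Mod}$ if and only if $i^{\ast}(F)(x,y)$ is a stable equivalence in $\mathsf{Sp}^{\Sigma}$, i.e. $F$ satisfies WE1) if and only if $i^{\ast}(F)$ does.

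The substance of the argument lies in condition WE2). Here I would produce a natural identification of ordinary categories $[\ca] \cong [i^{\ast}(\ca)]$ (see~\ref{[]}), functorial in $\ca$, under which $[F]$ corresponds to $[i^{\ast}(F)]$. Since $i^{\ast}$ does not alter the set of objects, both categories have the same objects. On morphisms, by definition of the functor $[-]$,
$$ \mbox{Hom}_{[\ca]}(x,y) = [R,\ca(x,y)]_{R\text{-}\mbox{Mod}}, \qquad \mbox{Hom}_{[i^{\ast}(\ca)]}(x,y) = [\mathbb{S},i^{\ast}(\ca)(x,y)]_{\mathsf{Sp}^{\Sigma}}, $$
the brackets denoting morphisms in the respective stable homotopy categories. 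The Quillen adjunction $(i_!,i^{\ast})$ descends to a derived adjunction on homotopy categories, and since $\mathbb{S}$ is cofibrant with $i_!(\mathbb{S}) = \mathbb{S}\underset{\mathbb{S}}{\wedge}R = R = {\bf 1}_{R\text{-}\mbox{Mod}}$, the adjunction isomorphism furnishes a natural bijection $[\mathbb{S},i^{\ast}(M)]_{\mathsf{Sp}^{\Sigma}} \cong [R,M]_{R\text{-}\mbox{Mod}}$ for every $R$-module $M$ (note that $i^{\ast}$ preserves all weak equivalences, so no derived correction of $i^{\ast}$ is needed). This is the desired bijection on morphism sets.

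The step I expect to be the main obstacle is checking that this bijection respects composition, so that it assembles into a genuine isomorphism of categories rather than a mere hom-wise bijection. The composition law in each of $[\ca]$ and $[i^{\ast}(\ca)]$ is induced from the enriched composition $\ca(y,z)\otimes\ca(x,y) \rightarrow \ca(x,z)$ by pushing forward along the relevant monoidal functor to $\mathbf{Set}$. Because $i_!$ is strong monoidal and sends the unit $\mathbb{S}$ to the unit $R$, the derived adjunction is monoidal, and the bijection above is therefore an isomorphism of lax monoidal functors $R\text{-}\mbox{Mod} \rightarrow \mathbf{Set}$; this monoidality is exactly what forces the two composition laws to agree under the identification. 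Granting this, $[F]$ and $[i^{\ast}(F)]$ coincide as functors, so $[F]$ is an equivalence of categories if and only if $[i^{\ast}(F)]$ is, i.e. $F$ satisfies WE2) if and only if $i^{\ast}(F)$ does. Combining the two equivalences, $F$ is a stable quasi-equivalence if and only if $i^{\ast}(F)$ is a stable quasi-equivalence in $\mathsf{Sp}^{\Sigma}\text{-}\mbox{Cat}$.
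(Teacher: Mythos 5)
Your proposal is correct and follows essentially the same route as the paper: condition WE1) is handled by observing that stable equivalences in $R\text{-}\mbox{Mod}$ are by definition detected on underlying symmetric spectra (theorem~\ref{model}), and condition WE2) by identifying $[\ca]$ with $[i^{\ast}(\ca)]$ via the strong monoidal adjunction $(i_!,i^{\ast})$ with $i_!(\mathbb{S})=R$. The paper states this identification in one line; you have merely spelled out the adjunction isomorphism and the compatibility with composition that justify it.
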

\begin{proof}
We start by noticing that the definition of stable equivalence in $R\text{-}\mbox{Mod}$ (see~\ref{model}), implies that a $R$-functor $F$ satisfies condition $WE1)$ if and only if the spectral functor $i^{\ast}(F)$ satisfies condition $S1)$ of definition \cite[5.1]{Spectral}. Now, let $\ca$ be a $R$-category. Observe that, since the functor 
$$i_!:\mathsf{Sp}^{\Sigma} \longrightarrow R\text{-}\mbox{Mod}$$
is strong monoidal, the category $[\ca]$ (see~\ref{[]}) naturally identifies with $[i^{\ast}(\ca)]$ and so $F$ satisfies condition $WE2)$ if and only if $i^{\ast}(F)$ satisfies condition $S2)$. 

This proves the proposition. 
\end{proof}
\subsection{$Q$-functor}
We now construct a functor
$$Q:R\text{-}\mbox{Cat} \longrightarrow R\text{-}\mbox{Cat}$$
and a natural transformation $\eta:Id \rightarrow Q$, from the identity functor on $R \text{-}\mbox{Cat}$ to $Q$.

\begin{remark} Recall from \cite[5.5]{Spectral} that we have at our disposal a set $\{A_{m,k,n}\} \cup U(K)$ of spectral functors used in the construction of a $Q$-functor in $\mathsf{Sp}^{\Sigma}\text{-}\mbox{Cat}$.
\end{remark}
\begin{definition}\label{defomega}
Let $\ca$ be a $R$-category. The functor $Q: R\text{-}\mbox{Cat} \longrightarrow R\text{-}\mbox{Cat}$ is obtained by applying the small object argument, using the set $ \{ i_!(A_{m,k,n})\} \cup i_!U(K)$, to factorize the $R$-functor
$$ \ca \longrightarrow \bullet\,,$$
where $\bullet$ denotes the terminal object in $R\text{-}\mbox{Cat}$.
\end{definition}

\begin{remark}\label{Rkomega}
We obtain in this way a functor $Q$ and a natural transformation $\eta:Id \rightarrow Q$. Notice that the set $ \{i_!(A_{m,k,n})\} \cup i_!U(K) $ identifies naturally with the set $  \{ U(i_!(A_{m,n,k}))\} \cup U(i_!(K))$. Notice also that $Q(\ca)$ has the same objects as $\ca$, and the R.L.P with respect to the set $  \{ U(i_!(A_{m,n,k}))\} \cup U(i_!(K))$. By adjunction and remark~\cite[5.7]{Spectral} this is equivalent to the following fact: the spectral category $i^{\ast}Q(\ca)$ satisfies the following condition
\begin{itemize}
\item[$\Omega)$] for all objects $x,y \in i^{\ast}Q(\ca)$ the symmetric spectra $(i^{\ast}Q)(x,y)$ is an $\Omega$-spectrum.
\end{itemize}
\end{remark}
\begin{proposition}\label{weak1}
Let $\ca$ be a $R$-category. The $R$-functor
$$ \eta_{\ca}:\ca \longrightarrow Q(\ca)$$
is a stable quasi-equivalence (\ref{stableR}).
\end{proposition}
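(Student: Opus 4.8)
The plan is to follow the strategy of proposition~\ref{fibres}, checking conditions $WE1)$ and $WE2)$ of definition~\ref{we} separately; the categorical condition $WE2)$ will be automatic, so that all the content sits in $WE1)$.

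First I would rewrite, using remark~\ref{Rkomega}, the set $\{i_!(A_{m,k,n})\} \cup i_!U(K)$ used in the construction of $Q$ (definition~\ref{defomega}) as $\{U(i_!(A_{m,k,n}))\} \cup U(i_!(K))$. Thus the generators of the $Q$-construction are free $R$-categories on morphisms of $R\text{-}\mbox{Mod}$, namely the extensions of scalars $i_!$ of the horn inclusions $F_m\Lambda[k,n]_+ \rightarrow F_m\Delta[n]_+$ and of the stabilization maps in $K$. By construction of the small object argument, $\eta_{\ca}:\ca \rightarrow Q(\ca)$ is then a transfinite composition of pushouts along these $U$-images. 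The key point is that the underlying morphisms of $R\text{-}\mbox{Mod}$ are trivial cofibrations in the projective stable model structure: the horn inclusions are level (hence stable) trivial cofibrations of symmetric spectra and the elements of $K$ are stable trivial cofibrations, and since $i_!$ is left Quillen for the stable structures (\cite[III-3.4]{Schwede}) their images remain stable trivial cofibrations in $R\text{-}\mbox{Mod}$.

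Next, since theorem~\ref{model} guarantees that the projective stable model structure on $R\text{-}\mbox{Mod}$ is monoidal and satisfies the monoid axiom, I would invoke proposition~\ref{clef} to conclude that each such pushout, and hence the transfinite composite $\eta_{\ca}$, induces stable weak equivalences on all hom-objects; that is, $\eta_{\ca}$ satisfies condition $WE1)$. (Alternatively one could reduce to the already-known spectral statement via proposition~\ref{coincide}, but the direct argument is cleaner.) Condition $WE2)$ is then free: because $Q(\ca)$ has the same objects as $\ca$ and $\eta_{\ca}$ is the identity on objects (remark~\ref{Rkomega}), the induced functor $[\eta_{\ca}]$ is bijective on objects, a fortiori essentially surjective, so $WE2')$ holds and, combined with $WE1)$, gives $WE2)$. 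Hence $\eta_{\ca}$ is a weak equivalence in the sense of definition~\ref{we}, i.e. a stable quasi-equivalence (definition~\ref{stableR}). The main obstacle is the verification of $WE1)$: one must guarantee that the gluings carried out by the small object argument do not disturb the homotopy type of the hom-objects, and this is exactly where the monoid axiom, packaged in proposition~\ref{clef}, is indispensable; the rest is bookkeeping.
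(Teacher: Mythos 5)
Your proposal is correct and follows essentially the same route as the paper: both arguments observe that the generating maps $\{i_!(A_{m,k,n})\}\cup i_!(K)$ are stable trivial cofibrations in $R\text{-}\mbox{Mod}$ (because $i_!$ is left Quillen), then invoke theorem~\ref{model} (monoidality and the monoid axiom) together with proposition~\ref{clef} to get condition $WE1)$, and finally note that $WE2')$ is automatic since $\eta_{\ca}$ is the identity on objects. The extra details you supply (the transfinite-composition bookkeeping and the explicit appeal to \cite[III-3.4]{Schwede}) are consistent with, and merely flesh out, the paper's argument.
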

\begin{proof}
We start by observing that since the elements of the set $\{A_{m,n,k}\} \cup K$ are trivial cofibrations in the projective stable model structure on $\mathsf{Sp}^{\Sigma}$, the elements of the set $ \{ i_!(A_{m,n,k})\} \cup i_!(K)$ are trivial cofibrations in the stable model structure on $R\text{-}\mbox{Mod}$ (see \ref{model}). By theorem~\ref{model} this model structure is monoidal and satisfies the monoid axiom. This implies, by proposition~\ref{clef}, that the spectral functor $\eta_{\ca}$ satisfies condition $WE1)$. Since $Q(\ca)$ and $\ca$ have the same set of objects condition $WE2')$ is automatically verified and so we conclude that $\eta_{\ca}$ belongs to $\cw_S$.
\end{proof}

\subsection{Main theorem}
\begin{definition}
A $R$-functor $F:\ca \rightarrow \cb$ is:
\begin{itemize}
\item[-] a {\em $Q$-weak equivalence} if $Q(F)$ is a levelwise quasi-equivalence (see~\ref{deflevelR}).
\item[-] a {\em cofibration} if it is a cofibration in the model structure of theorem~\ref{levelR} .
\item[-] a {\em $Q$-fibration} if it has the R.L.P. with respect to all cofibrations which are $Q$-weak equivalences.
\end{itemize}
\end{definition}
\begin{lemma}\label{coincide2}
A $R$-functor $F:\ca \rightarrow \cb$ is a $Q$-weak equivalence if and only if it is a stable quasi-equivalence (\ref{stableR}).
\end{lemma}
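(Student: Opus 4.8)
The plan is to follow the proof of Lemma~\ref{coincide1} almost verbatim, transporting the $\Omega$-spectrum input through the restriction functor $i^{\ast}$. I would begin from the naturality square
$$
\xymatrix{
\ca \ar[d]_F \ar[r]^-{\eta_{\ca}} & Q(\ca) \ar[d]^{Q(F)} \\
\cb \ar[r]_-{\eta_{\cb}} & Q(\cb)\,,
}
$$
in which $\eta_{\ca}$ and $\eta_{\cb}$ are stable quasi-equivalences by proposition~\ref{weak1}. Since the class $\cw_S$ satisfies the two-out-of-three axiom, $F \in \cw_S$ if and only if $Q(F) \in \cw_S$. This reduces the whole lemma to comparing the two notions of equivalence on the pair $Q(\ca), Q(\cb)$, which by remark~\ref{Rkomega} both satisfy condition $\Omega)$.

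The key intermediate step is therefore the $R$-categorical analog of proposition~\ref{weak=level}: an $R$-functor $G$ between $R$-categories satisfying $\Omega)$ is a stable quasi-equivalence if and only if it is a levelwise quasi-equivalence. I would establish this by reduction to spectral categories. By proposition~\ref{coincide}, $G \in \cw_S$ if and only if $i^{\ast}(G)$ is a stable quasi-equivalence in $\mathsf{Sp}^{\Sigma}\text{-}\mbox{Cat}$; by definition~\ref{deflevelR}, $G$ is a levelwise quasi-equivalence if and only if $i^{\ast}(G)$ is one; and by remark~\ref{Rkomega} the condition $\Omega)$ is literally a condition on $i^{\ast}(-)$. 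Hence the coincidence follows from the corresponding facts for $\mathsf{Sp}^{\Sigma}\text{-}\mbox{Cat}$ recorded in \cite{Spectral}, which themselves rest on the observation that a stable equivalence between $\Omega$-spectra is already a level equivalence (level equivalences being stable equivalences in general).

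With this coincidence in hand the two directions close up formally, each passing through the two-out-of-three step and the fact that $Q(\ca)$ and $Q(\cb)$ satisfy $\Omega)$. If $F$ is a stable quasi-equivalence, then $Q(F) \in \cw_S$, and the coincidence upgrades $Q(F)$ to a levelwise quasi-equivalence, i.e.\ $F$ is a $Q$-weak equivalence. Conversely, if $F$ is a $Q$-weak equivalence then $Q(F)$ is a levelwise quasi-equivalence, so by the coincidence $Q(F) \in \cw_S$, and two-out-of-three returns $F \in \cw_S$.

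I expect the only genuine content to lie in the $\Omega)$-coincidence of the middle paragraph; everything around it is bookkeeping with the two-out-of-three axiom. Even that step is not really new, since propositions~\ref{coincide} and \ref{weak1} together with remark~\ref{Rkomega} have arranged that both quasi-equivalence notions \emph{and} the condition $\Omega)$ are detected by $i^{\ast}$, so the substantive homotopical work is inherited wholesale from the spectral-category case rather than redone here.
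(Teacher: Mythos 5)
Your proposal is correct and follows essentially the same route as the paper: both reduce everything through the restriction functor $i^{\ast}$ to the spectral-category case, using proposition~\ref{coincide}, definition~\ref{deflevelR} and remark~\ref{Rkomega} to transport stable quasi-equivalences, levelwise quasi-equivalences and condition $\Omega)$ respectively, and then invoke the coincidence of the two notions for $\Omega)$-categories established in \cite{Spectral}. The only cosmetic difference is that you apply two-out-of-three in $R\text{-}\mbox{Cat}$ before restricting, whereas the paper restricts first and runs the two-out-of-three argument in $\mathsf{Sp}^{\Sigma}\text{-}\mbox{Cat}$; these are interchangeable given proposition~\ref{coincide}.
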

\begin{proof}
We have at our disposal a commutative square
$$
\xymatrix{
\ca \ar[r]^{\eta_{\ca}} \ar[d]_F & Q(\ca) \ar[d]^{Q(F)} \\
\cb \ar[r]_{\eta_{\cb}} & Q(\cb) 
}
$$
where the $R$-functors $\eta_{\ca}$ and $\eta_{\cb}$ are stable quasi-equivalences by proposition~\ref{weak1}. If we apply the restriction functor $i^{\ast}$ to the above square, we obtain
$$
\xymatrix{
i^{\ast}\ca \ar[d]_-{i^{\ast}(F)} \ar[r]^-{i^{\ast}(\eta_{\ca})} & i^{\ast}Q(\ca) \ar[d]^-{i^{\ast}Q(F)} \\
i^{\ast}\cb \ar[r]_-{i^{\ast}(\eta_{\cb})} & i^{\ast}Q(\cb)\,,
}
$$
where the spectral functors $i^{\ast}(\eta_{\ca})$ and $i^{\ast}(\eta_{\cb})$ are stable quasi-equivalences in $\mathsf{Sp}^{\Sigma}\text{-}\mbox{Cat}$ (see~\ref{coincide}) and the spectral categories $i^{\ast}Q(\ca)$ and $i^{\ast}Q(\cb)$ satisfy condition $\Omega)$ (see~\ref{Rkomega}). Now, we conclude: $F$ is a weak equivalence if and only if $i^{\ast}(F)$ is a stable quasi-equivalence (\ref{coincide}); if and only if $i^{\ast}Q(F)$ is a stable quasi-equivalence; if and only if $i^{\ast}Q(F)$ is a levelwise quasi-equivalence (theorem~\cite[5.11-(A1)]{Spectral}); if and only if $F$ is a $Q$-weak equivalence.
\end{proof}
\begin{theorem}\label{stable1}
The category $R\text{-}\mbox{Cat}$ admits a right proper Quillen model structure whose weak equivalences are the stable quasi-equivalences (\ref{stableR}) and the cofibrations those of theorem~\ref{levelR}.
\end{theorem}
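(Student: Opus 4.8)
The plan is to verify the hypotheses of the abstract localization theorem~\ref{modif}, exactly as in the proof of theorem~\ref{stable}. I would take for $\cm$ the category $R\text{-}\mbox{Cat}$ equipped with the (right proper, by remark~\ref{Rprop2}) level model structure of theorem~\ref{levelR}, and for $Q$ and $\eta$ the functor and natural transformation constructed in definition~\ref{defomega}. By lemma~\ref{coincide2} the associated $Q$-weak equivalences coincide with the stable quasi-equivalences of definition~\ref{stableR}, so once conditions (A1), (A2) and (A3) of theorem~\ref{modif} are checked, the theorem follows, together with the identification of the cofibrations with those of theorem~\ref{levelR}. The guiding idea throughout is to restrict along $i^{\ast}$ and import the corresponding statements for spectral categories from~\cite{Spectral}: by remark~\ref{Rkomega} the spectral categories $i^{\ast}Q(-)$ satisfy condition $\Omega)$, and a stable quasi-equivalence between spectral categories satisfying $\Omega)$ is a levelwise quasi-equivalence (the $\mathsf{Sp}^{\Sigma}\text{-}\mbox{Cat}$ analogue of proposition~\ref{weak=level}, see~\cite{Spectral}).

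Conditions (A1) and (A2) are handled by one uniform argument. For (A1), let $F:\ca \rightarrow \cb$ be a levelwise quasi-equivalence; it is in particular a stable quasi-equivalence, so from the naturality square for $\eta$ and the two-out-of-three property of $\cw_S$ we get that $Q(F)$ is a stable quasi-equivalence. Applying $i^{\ast}$ and using proposition~\ref{coincide}, the spectral functor $i^{\ast}Q(F)$ is a stable quasi-equivalence between spectral categories which, by remark~\ref{Rkomega}, satisfy $\Omega)$; hence it is a levelwise quasi-equivalence, and therefore so is $Q(F)$. For (A2), proposition~\ref{weak1} together with two-out-of-three shows that $\eta_{Q(\ca)}$ and $Q(\eta_{\ca})$ are stable quasi-equivalences between $R$-categories whose $i^{\ast}$-images satisfy $\Omega)$, and the same reduction to~\cite{Spectral} shows they are levelwise quasi-equivalences.

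The main work is condition (A3), which is also where the right-properness input enters. First I would record the hom-wise behaviour of $Q$-fibrations: by lemma~\ref{U} the functor $U:R\text{-}\mbox{Mod} \rightarrow R\text{-}\mbox{Cat}$ sends (projective) cofibrations to cofibrations, and it clearly sends stable equivalences of $R$-modules to stable quasi-equivalences, so if $P:\cb \rightarrow \cd$ is a $Q$-fibration then each morphism $P(x,y):\cb(x,y) \rightarrow \cd(Px,Py)$ is a $Q$-fibration of $R$-modules (this is the chapter~6 analogue of the opening observation in step (A3) of theorem~\ref{stable}). Now, given such a $P$ with codomain $Q(\ca)$, form the pullback $\ca \underset{Q(\ca)}{\times}\cb$ along $\eta_{\ca}$. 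Pullbacks in $R\text{-}\mbox{Cat}$ are computed objectwise and hom-wise, and the projective stable model structure on $R\text{-}\mbox{Mod}$ is right proper (theorem~\ref{model}); hence the induced $R$-functor $\ca \underset{Q(\ca)}{\times}\cb \rightarrow \cb$ satisfies condition $WE1)$. Since $\eta_{\ca}$ is the identity on objects (remark~\ref{Rkomega}), so is this projection, whence condition $WE2')$ is automatic.

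The step I expect to be the main obstacle is (A3), precisely the passage from the $Q$-fibration $P$ to the hom-wise $Q$-fibrations of $R$-modules together with the right-properness of the pullback; everything else is a mechanical transport of the $\mathsf{Sp}^{\Sigma}\text{-}\mbox{Cat}$ results through $i^{\ast}$. Once (A1), (A2) and (A3) are in place, theorem~\ref{modif} delivers the right proper model structure with the stated weak equivalences and cofibrations, proving theorem~\ref{stable1}.
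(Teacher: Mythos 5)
Your proposal is correct and follows essentially the same route as the paper: verify conditions (A1)--(A3) of theorem~\ref{modif} for the level structure of theorem~\ref{levelR} with the $Q$-functor of definition~\ref{defomega}, using lemma~\ref{coincide2} to identify the $Q$-weak equivalences with the stable quasi-equivalences and restricting along $i^{\ast}$ to import the $\Omega)$-arguments from \cite{Spectral}. The only (harmless) divergence is in (A3), where you invoke right properness of the projective stable model structure on $R\text{-}\mbox{Mod}$ directly (mirroring the paper's proof of theorem~\ref{stable}), whereas the paper instead pushes the pullback square down to $\mathsf{Sp}^{\Sigma}\text{-}\mbox{Cat}$ and cites condition (A3) of theorem \cite[5.11]{Spectral}.
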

\begin{proof}
The proof will consist on verifying the conditions of theorem~\ref{modif}. We consider for $\cm$ the Quillen model structure of theorem~\ref{levelR} and for $Q$ and $\eta$, the functor and natural transformation defined in \ref{defomega}. The Quillen model structure of theorem~\ref{levelR} is right proper (\ref{Rprop2}) and by lemma~\ref{coincide2}, the $Q$-weak equivalences are precisely the stable quasi-equivalences. We now verify conditions (A1), (A2) and (A3):
\begin{itemize}
\item[(A1)] Let $F:\ca \rightarrow \cb$ be a levelwise quasi-equivalence. We have the following commutative square
$$
\xymatrix{
\ca \ar[d]_F \ar[r]^{\eta_{\ca}} & Q(\ca) \ar[d]^{Q(F)} \\
\cb \ar[r]_{\eta_{\cb}} & Q(\cb)
}
$$
with $\eta_{\ca}$ and $\eta_{\cb}$ stable quasi-equivalences. If we apply the restriction functor $i^{\ast}$ to the above square, we obtain 
$$
\xymatrix{
i^{\ast}\ca \ar[d]_{i^{\ast}(F)} \ar[r]^-{i^{\ast}(\eta_{\ca})} & i^{\ast}Q(\ca) \ar[d]^{i^{\ast}Q(F)} \\
i^{\ast}\cb \ar[r]_-{i^{\ast}(\eta_{\cb})} & i^{\ast}Q(\cb)\,,
}
$$
where $i^{\ast}(F)$ is a levelwise quasi-equivalence (\ref{deflevelR}), $i^{\ast}(\eta_{\ca})$ and $i^{\ast}(\eta_{\cb})$ are stable quasi-equivalences (\ref{coincide}) and $i^{\ast}Q(\ca)$ and $i^{\ast}Q(\cb)$ satisfying the condition $\Omega)$ (see~\ref{Rkomega}). Now, condition (A1) of theorem \cite[5.11]{Spectral}, implies condition (A1).
\item[(A2)] We now show that for every $R$-category $\ca$, the $R$-functors
$$ \eta_{Q(\ca)}, Q(\eta_{\ca}):Q(\ca) \longrightarrow QQ(\ca)$$
are stable quasi-equivalences. If we apply the restriction functor $i^{\ast}$ to the above $R$-functors, we obtain stable quasi-equivalences between spectral categories which satisfy condition $\Omega)$. Now, condition (A2) of theorem~\cite[5.11]{Spectral} implies condition (A2).
\item[(A3)] We start by observing that if $P:\ca \longrightarrow \cd$ is a $Q$-fibration, then for all objects $x,y \in \cc$, the morphism
$$ P(x,y):\cc(x,y) \longrightarrow \cd(Px,Py)$$
is a fibration in the projective stable model structure on $R\text{-}\mbox{Mod}$ (see \ref{model}). In fact, by proposition~\ref{U}, the functor
$$U: R\text{-}\mbox{Mod} \longrightarrow R\text{-}\mbox{Cat}$$
sends projective cofibrations to cofibrations. Since clearly it sends stable equivalences to stable quasi-equivalences the claim follows.

Now consider the following diagram in $R\text{-}\mbox{Cat}$
$$
\xymatrix{
\ca \underset{Q(\ca)}{\times}\cb \ar@{}[dr]|{\ulcorner} \ar[r] \ar[d] & \cb \ar[d]^P \\
\ca \ar[r]_{\eta_{\ca}} & Q(\ca)\,,
}
$$
with $P$ a $Q$-fibration. If we apply the restriction functor $i^{\ast}$ to the previous commutative square, we obtain
$$
\xymatrix{
i^{\ast}\ca \underset{i^{\ast}Q(\ca)}{\times} i^{\ast}\cb \ar@{}[dr]|{\ulcorner} \ar[r] \ar[d] & i^{\ast}(\cb) \ar[d]^{i^{\ast}(P)} \\
i^{\ast}\ca \ar[r]_{i^{\ast}(\eta_{\ca})} & i^{\ast}Q(\ca)
}
$$
a cartesian square, with $i^{\ast}(\eta_{\ca})$ a stable quasi-equivalence. Notice also that $i^{\ast}(P)$ is such that for all objects $x,y \in i^{\ast}(\cb)$
$$ i^{\ast}(P)(x,y): i^{\ast}(\cb)(x,y) \longrightarrow i^{\ast}Q(\ca)$$
is a fibration in the projective stable model structure on $\mathsf{Sp}^{\Sigma}$.
Now, condition (A3) of theorem~\cite[5.11]{Spectral} implies condition (A3).
\end{itemize}
The theorem is now proved.
\end{proof}

\begin{proposition}\label{Sfibrant}
A $R$-category $\ca$ is fibrant with respect to the model structure of theorem~\ref{stable1} if and only if for all objects $x,y \in \ca$, the symmetric spectrum $(i^{\ast}\ca)(x,y)$ is an $\Omega$-spectrum.
\end{proposition}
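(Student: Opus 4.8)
The plan is to transport the argument used for the corresponding statement in the $\spc$-setting (proposition~\ref{SfibS}) along the restriction functor $i^{\ast}$, since the model structure of theorem~\ref{stable1} is built from the level structure of theorem~\ref{levelR} by the same localization machine (theorem~\ref{modif}) with fibrant replacement $\eta$. First I would invoke corollary~\ref{cormodif}: a $R$-category $\ca$ is fibrant with respect to theorem~\ref{stable1} if and only if it is fibrant with respect to theorem~\ref{levelR} and the $R$-functor $\eta_{\ca}:\ca \rightarrow Q(\ca)$ is a levelwise quasi-equivalence. By remark~\ref{Rprop3} the first condition says exactly that each symmetric spectrum $i^{\ast}(\ca)(x,y)$ is levelwise fibrant, while by definition~\ref{deflevelR}, together with the fact (remark~\ref{Rkomega}) that $\eta_{\ca}$ is the identity on objects, the second condition says exactly that each morphism $i^{\ast}(\eta_{\ca})(x,y): i^{\ast}(\ca)(x,y) \rightarrow i^{\ast}Q(\ca)(x,y)$ is a level equivalence in $\mathsf{Sp}^{\Sigma}$.

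For the forward implication, I would assume $\ca$ fibrant. By remark~\ref{Rkomega} each target $i^{\ast}Q(\ca)(x,y)$ is an $\Omega$-spectrum. Comparing the $n$-th adjoint structure maps $i^{\ast}(\ca)(x,y)_n \rightarrow \Omega\, i^{\ast}(\ca)(x,y)_{n+1}$ of source and target in the commutative square whose vertical arrows are the level equivalences $i^{\ast}(\eta_{\ca})(x,y)$ — exactly the diagram appearing in the proof of proposition~\ref{SfibS} — the bottom map and both vertical maps are weak equivalences, so by two-out-of-three the top structure map is a weak equivalence for every $n$. Hence $i^{\ast}(\ca)(x,y)$ is an $\Omega$-spectrum.

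For the reverse implication, I would assume each $i^{\ast}(\ca)(x,y)$ is an $\Omega$-spectrum. Then it is in particular levelwise fibrant, so $\ca$ is fibrant for theorem~\ref{levelR} by remark~\ref{Rprop3}. By proposition~\ref{weak1} and proposition~\ref{coincide} the restricted functor $i^{\ast}(\eta_{\ca})$ is a stable quasi-equivalence, so each $i^{\ast}(\eta_{\ca})(x,y)$ is a stable equivalence of symmetric spectra; being a stable equivalence between two $\Omega$-spectra (the source by hypothesis, the target by remark~\ref{Rkomega}), it is in fact a level equivalence. Thus $\eta_{\ca}$ is a levelwise quasi-equivalence and, by corollary~\ref{cormodif}, $\ca$ is fibrant for theorem~\ref{stable1}.

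The bookkeeping with $i^{\ast}$ is routine; the only genuine content lies in the two stable-homotopy facts for symmetric spectra — that a level equivalence relating an arbitrary spectrum to an $\Omega$-spectrum detects the $\Omega$-spectrum condition, and that a stable equivalence between $\Omega$-spectra is a level equivalence. These are standard (they are precisely the inputs behind proposition~\ref{weak=level}, via the analogues of lemmas~\cite[4.3.5--4.3.6]{Hirschhorn}), so I expect the main obstacle to be merely invoking them in the correct form for symmetric spectra of pointed simplicial sets rather than re-establishing them.
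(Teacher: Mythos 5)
Your proof is correct and follows essentially the same route as the paper's: both reduce via corollary~\ref{cormodif} to fibrancy in the level structure of theorem~\ref{levelR} together with $\eta_{\ca}$ being a levelwise quasi-equivalence, and both conclude by the two-out-of-three argument in the commutative square comparing adjoint structure maps against the $\Omega$-spectrum $i^{\ast}Q(\ca)(x,y)$. If anything you are slightly more complete than the paper, whose written proof only carries out the forward implication; your converse step (a stable equivalence between $\Omega$-spectra of pointed simplicial sets is a level equivalence, applied to $i^{\ast}(\eta_{\ca})$ via propositions~\ref{weak1} and~\ref{coincide}) supplies exactly the half that is left implicit there.
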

\begin{notation}
We denote these fibrant $R$-categories by $Q$-fibrant.
\end{notation}
\begin{proof}
By corollary $A.5$, $\ca$ is fibrant, with respect to theorem~\ref{stable1} if and only if it is fibrant for the structure of theorem~\ref{levelR} (see remark~\ref{Rprop3}) and the $R$-functor $\eta_{\ca}:\ca \rightarrow Q(\ca)$ is a levelwise quasi-equivalence. Observe that 
$$i^{\ast}(\eta_{\ca}):i^{\ast}(\ca) \rightarrow i^{\ast}Q(\ca)$$
 is a levelwise quasi-equivalence if and only if for all objects $x,y \in \ca$, the morphism of symmetric spectra
$$ i^{\ast}(\eta_{\ca}(x,y)):i^{\ast}\ca(x,y) \longrightarrow i^{\ast}Q(\ca)(x,y)$$
is a level equivalence. Since for all objects $x,y \in \ca$, the symmetric spectrum $i^{\ast}Q(\ca)(x,y)$ is an $\Omega$-spectrum (see \ref{Rkomega}) we have the following commutative diagrams (for all $n \geq 0$)
$$
\xymatrix{
i^{\ast} \ca(x,y)_n \ar[d] \ar[r]^-{\widetilde{\delta_n}} & \Omega(i^{\ast}\ca(x,y)_{n+1}) \ar[d] \\
i^{\ast} Q(\ca)(x,y)_n \ar[r]_-{\widetilde{\delta_n}} &  \Omega (i^{\ast}Q(\ca)(x,y)_{n+1})\,,
}
$$  where the bottom and vertical arrows are weak equivalences of pointed simplicial sets. This implies that
$$ \widetilde{\delta}_n: i^{\ast}\ca(x,y)_n \longrightarrow \Omega (i^{\ast}\ca(x,y)_{n+1})\,,\,\,n \geq 0$$
is a weak equivalence of pointed simplicial sets. In conclusion, for all objects $x,y \in \ca$, the symmetric spectrum $ i^{\ast}\ca(x,y)$ is an $\Omega$-spectrum.
\end{proof}
\begin{remark}\label{condb}
Remark~\ref{Rkomega} and proposition~\ref{Sfibrant} imply that $\eta_{\ca}:\ca \rightarrow Q(\ca)$ is a (functorial) fibrant replacement of $\ca$ in the model structure of theorem~\ref{stable1}. In particular $\eta_{\ca}$ induces the identity map on the set of objects.
\end{remark}

\begin{proposition}\label{fibQfib}
Let $F:\ca \rightarrow \cb$ be a fibration (in the model structure of theorem~\ref{levelR}) between $Q$-fibrant objects (see~\ref{Sfibrant}). Then $F$ is a $Q$-fibration.
\end{proposition}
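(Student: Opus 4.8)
The plan is to deduce this from the general characterization of $Q$-fibrations supplied by theorem~\ref{Qfib}, exactly as was done in the $\spc$-analogue proposition~\ref{Qfibfib}. That characterization identifies the $Q$-fibrations among the fibrations (in the model structure of theorem~\ref{levelR}) as those whose naturality square for $\eta$ is homotopy cartesian in that structure. Since $F$ is assumed to be a fibration, the only thing left to verify is that the commutative square
$$
\xymatrix{
\ca \ar[d]_F \ar[r]^-{\eta_{\ca}} & Q(\ca) \ar[d]^{Q(F)} \\
\cb \ar[r]_-{\eta_{\cb}} & Q(\cb)
}
$$
is homotopy cartesian in the Quillen model structure of theorem~\ref{levelR}.

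First I would reduce the two horizontal maps to levelwise quasi-equivalences, passing through the restriction functor $i^{\ast}$. By proposition~\ref{weak1} the coaugmentation maps $\eta_{\ca}$ and $\eta_{\cb}$ are stable quasi-equivalences, hence by proposition~\ref{coincide} their restrictions $i^{\ast}(\eta_{\ca})$ and $i^{\ast}(\eta_{\cb})$ are stable quasi-equivalences in $\mathsf{Sp}^{\Sigma}\text{-}\mbox{Cat}$. Since $\ca$ and $\cb$ are $Q$-fibrant, proposition~\ref{Sfibrant} shows that $i^{\ast}(\ca)$ and $i^{\ast}(\cb)$ satisfy condition $\Omega)$, while remark~\ref{Rkomega} gives the same for $i^{\ast}Q(\ca)$ and $i^{\ast}Q(\cb)$. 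Appealing to the $\mathsf{Sp}^{\Sigma}\text{-}\mbox{Cat}$ counterpart of proposition~\ref{weak=level} from \cite{Spectral} (namely, that a stable quasi-equivalence between spectral categories satisfying $\Omega)$ is automatically a levelwise quasi-equivalence), I would conclude that $i^{\ast}(\eta_{\ca})$ and $i^{\ast}(\eta_{\cb})$ are levelwise quasi-equivalences, and therefore by definition~\ref{deflevelR} that $\eta_{\ca}$ and $\eta_{\cb}$ themselves are levelwise quasi-equivalences in $R\text{-}\mbox{Cat}$.

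Finally, with both horizontal arrows now weak equivalences in the model structure of theorem~\ref{levelR}, the right properness of that structure (remark~\ref{Rprop2}) forces the square above to be homotopy cartesian. Invoking theorem~\ref{Qfib} then yields that $F$ is a $Q$-fibration, which completes the argument. Note that, in contrast with proposition~\ref{Qfibfib}, only this single implication is required here, so no appeal to theorem~\ref{Qfib} in the reverse direction is needed.

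The main obstacle I anticipate is essentially bookkeeping rather than conceptual: one must confirm that the precise hypotheses of theorem~\ref{Qfib} are matched in the $R\text{-}\mbox{Cat}$ setting and, above all, that the ``$\Omega)$ implies levelwise'' statement is genuinely available at the level of $\mathsf{Sp}^{\Sigma}\text{-}\mbox{Cat}$ (where proposition~\ref{weak=level} has no direct $R\text{-}\mbox{Cat}$ analogue and must be imported from \cite{Spectral}). Everything else transports cleanly through $i^{\ast}$ by propositions~\ref{coincide} and~\ref{Sfibrant}.
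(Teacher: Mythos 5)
Your proposal is correct and follows essentially the same route as the paper: both reduce via theorem~\ref{Qfib} to showing the naturality square for $\eta$ is homotopy cartesian, establish that $\eta_{\ca}$ and $\eta_{\cb}$ are levelwise quasi-equivalences using $Q$-fibrancy and condition $\Omega)$ through $i^{\ast}$, and conclude by right properness. The only difference is that you spell out in more detail the step (stated without justification in the paper) that the restricted coaugmentations are levelwise quasi-equivalences, correctly importing the ``stable quasi-equivalence between $\Omega)$-categories is levelwise'' statement from the spectral-category setting.
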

\begin{proof}
We need to show (by theorem~\ref{Qfib}) that the following commutative square
$$
\xymatrix{
\ca \ar[d]_F \ar[r]^-{\eta_{\ca}} & Q(\ca) \ar[d]^{Q(F)} \\
\cb \ar[r]_-{\eta_{\cb}} & Q(\cb)
}
$$
is homotopy cartesian (in the model structure of theorem~\ref{levelR}). By theorem~\ref{levelR} this is equivalent to the fact that the following commutative square in $\mathsf{Sp}^{\Sigma}\text{-}\mbox{Cat}$
$$
\xymatrix{
i^{\ast}(\ca) \ar[d]_{i^{\ast}(F)} \ar[r]^-{i^{\ast}(\eta_{\ca})} & Q(\ca) \ar[d]^{i^{\ast}Q(F)} \\
i^{\ast}(\cb) \ar[r]_-{i^{\ast}(\eta_{\cb})} & i^{\ast}Q(\cb)
}
$$
is homotopy cartesian. Notice that since $\ca$ and $\cb$ are $Q$-fibrant objects, the spectral functors $i^{\ast}(\eta_{\ca})$ and $i^{\ast}(\eta_{\cb})$ are levelwise quasi-equivalences. Since $i^{\ast}(F)$ is a fibration, the proposition is proven. 
\end{proof}
Let $f:P\rightarrow R$ be a morphism of commutative symmetric ring spectra. Notice that the restriction/extension of scalars adjunction (\ref{restr/ext}) induces the natural adjunction
$$
\xymatrix{
R\text{-}\mbox{Cat} \ar@<1ex>[d]^{f^{\ast}}\\
P\text{-}\mbox{Cat}\ar@<1ex>[u]^{f_!} \,.
}
$$
\begin{proposition}\label{Top}
The previous adjunction $(f_!,f^{\ast})$ is a Quillen adjunction with respect to the model structures of theorems~\ref{levelR} and \ref{stable1}.
\end{proposition}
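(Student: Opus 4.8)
The plan is to verify the two Quillen adjunction conditions through the restriction functors to $\mathsf{Sp}^{\Sigma}\text{-}\mbox{Cat}$, exploiting that both model structures on $R\text{-}\mbox{Cat}$ and $P\text{-}\mbox{Cat}$ are governed (via definitions \ref{deflevelR}, \ref{stableR} and proposition \ref{coincide}) by the underlying spectral categories. The linchpin is the observation that, since the sphere spectrum $\mathbb{S}$ is initial, the structure map $\mathbb{S}\to R$ factors as $\mathbb{S}\to P \stackrel{f}{\to} R$; consequently the restriction functor $i^{\ast}_R:R\text{-}\mbox{Cat}\to \mathsf{Sp}^{\Sigma}\text{-}\mbox{Cat}$ factors as $i^{\ast}_R = i^{\ast}_P\circ f^{\ast}$, where $i^{\ast}_P:P\text{-}\mbox{Cat}\to\mathsf{Sp}^{\Sigma}\text{-}\mbox{Cat}$ is the corresponding restriction and $f^{\ast}:R\text{-}\mbox{Cat}\to P\text{-}\mbox{Cat}$ is our right adjoint. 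This holds because the underlying symmetric spectrum of a module is left unchanged by restriction of scalars.

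For the levelwise structures (theorem \ref{levelR}) the argument is then immediate. The structures on both $R\text{-}\mbox{Cat}$ and $P\text{-}\mbox{Cat}$ are lifted from $\mathsf{Sp}^{\Sigma}\text{-}\mbox{Cat}$ along $i^{\ast}_R$ and $i^{\ast}_P$ via the lifting theorem \cite[11.3.2]{Hirschhorn}, so an $R$-functor $F$ is a levelwise quasi-equivalence (resp.\ a fibration) precisely when $i^{\ast}_R(F)$ is one, and likewise over $P$. Combining this with $i^{\ast}_R = i^{\ast}_P\circ f^{\ast}$, I find that $f^{\ast}$ both preserves and reflects levelwise quasi-equivalences and fibrations; in particular it preserves fibrations and trivial fibrations, so $(f_!,f^{\ast})$ is a Quillen adjunction for the model structures of theorem \ref{levelR}.

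For the stable structures (theorem \ref{stable1}) I would invoke the criterion of proposition \ref{crit}, checking that the right adjoint $f^{\ast}$ preserves trivial fibrations and fibrations between fibrant objects. The cofibrations of the stable structure coincide with those of theorem \ref{levelR}, hence so do the trivial fibrations, and the levelwise case already shows these are preserved. Preservation of stable quasi-equivalences likewise follows, since by proposition \ref{coincide} (over both $R$ and $P$) together with $i^{\ast}_R=i^{\ast}_P\circ f^{\ast}$, an $R$-functor $F$ is a stable quasi-equivalence iff $i^{\ast}_R(F)$ is one iff $f^{\ast}(F)$ is. It remains to treat fibrations between fibrant objects. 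Here I use that fibrant means $Q$-fibrant (proposition \ref{Sfibrant}), i.e.\ an entrywise $\Omega$-spectrum condition on $i^{\ast}_R$; since this condition is detected by $i^{\ast}_R=i^{\ast}_P\circ f^{\ast}$, the functor $f^{\ast}$ carries $Q$-fibrant $R$-categories to $Q$-fibrant $P$-categories. Given a stable fibration $F$ between $Q$-fibrant objects, $F$ is in particular a levelwise fibration (every levelwise trivial cofibration being a stable trivial cofibration), whence $f^{\ast}(F)$ is a levelwise fibration between $Q$-fibrant $P$-categories; proposition \ref{fibQfib} then promotes it to a $Q$-fibration, that is, a stable fibration. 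Proposition \ref{crit} now yields the stable Quillen adjunction.

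The main obstacle is precisely this last step. Because the stable model structure of theorem \ref{stable1} is built by a Bousfield-type $Q$-construction rather than transferred outright along $i^{\ast}$, the stable fibrations are not simply those detected by the restriction functor, so one cannot argue as directly as in the levelwise case. The way around this is to restrict attention to the fibrant objects, where stable and levelwise fibrations agree (proposition \ref{fibQfib}), and to exploit that $Q$-fibrancy is itself an $\Omega$-spectrum condition visible through $i^{\ast}_R=i^{\ast}_P\circ f^{\ast}$; this is exactly what makes proposition \ref{crit}, rather than a naive preservation-of-all-fibrations argument, the appropriate tool.
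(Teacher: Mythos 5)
Your proposal is correct and follows essentially the same route as the paper: the factorization $i_R^{\ast}=i_P^{\ast}\circ f^{\ast}$ coming from $\mathbb{S}\to P\stackrel{f}{\to}R$ handles the levelwise structures (both being lifted along the restriction to $\mathsf{Sp}^{\Sigma}\text{-}\mbox{Cat}$), and the stable case is settled via proposition~\ref{crit} together with the facts that trivial fibrations agree with the levelwise ones, that $Q$-fibrancy is an $\Omega$-spectrum condition visible through $i^{\ast}$ (proposition~\ref{Sfibrant}), and that fibrations between $Q$-fibrant objects are $Q$-fibrations (proposition~\ref{fibQfib}). The only cosmetic difference is that the paper quotes theorem~\ref{Qfib} to see that a $Q$-fibration is a levelwise fibration, where you argue directly from the lifting property; both are valid.
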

\begin{proof}
We start by considering the model structure of theorem~\ref{levelR}. Notice that the commutative diagram
$$
\xymatrix{
P \ar[rr]^f  && R \\
 & \mathbb{S} \ar[lu]^{i_P} \ar[ur]_{i_R} & 
}
$$
induces a {\em strictly} commutative diagram of categories
$$
\xymatrix{
P\text{-}\mbox{Cat} \ar[dr]_{i_P^{\ast}}  && R\text{-}\mbox{Cat} \ar[ll]_{f^{\ast}} \ar[dl]^{i_R^{\ast}} \\
 & \mathsf{Sp}^{\Sigma}\text{-}\mbox{Cat} & \,.
}
$$
By theorem~\ref{levelR}, we conclude that $f^{\ast}$ preserves (and reflects) levelwise quasi-equivalences and fibrations.

Now, to show the proposition with respect to the model structure of theorem~\ref{stable1}, we use proposition~\ref{crit}.
\begin{itemize}
\item[a)] Clearly $f^{\ast}$ preserves trivial fibrations, since these are the same in the Quillen model structures of theorems~\ref{levelR} and \ref{stable1}.
\item[b)] Let $F:\ca \rightarrow \cb$ be a $Q$-fibration in $R\text{-}\mbox{Cat}$ between fibrant objects (see~\ref{Sfibrant}). By theorem~\ref{Qfib}, $F$ is a fibration and so by $a)$ so it is $f^{\ast}(F)$. Notice that proposition~\ref{Sfibrant} and the above strictly commutative diagram of categories imply that $f^{\ast}$ preserves $Q$-fibrant objects. Now, we conclude by proposition~\ref{fibQfib} that $f^{\ast}(F)$ is in fact a $Q$-fibration.
\end{itemize}
The proposition is proven.
\end{proof}
\section{Eilenberg-Mac Lane spectral algebra}\label{chapter7}
In this chapter we prove that in the case of the Eilenberg-Mac Lane ring spectrum $H\mathbb{Z}$, the model structure of theorem~\ref{stable1} is Quillen equivalent to the one described in chapter~\ref{chapter5} for the case of simplicial $\mathbb{Z}$-modules (see proposition~\ref{glob4}).
Recall from \cite[4.3]{Shipley} that we have the following strong monoidal Quillen equivalence
$$
\xymatrix{
\mathsf{Sp}^{\Sigma}(s\mathbf{Ab}, \widetilde{\mathbb{Z}}(S^1)) \ar@<1ex>[d]^U \\
H\mathbb{Z}\text{-}\mbox{Mod} \ar@<1ex>[u]^Z \,,
}
$$
where $U$ is the forgetful functor. By remark~\ref{corleftadj}, it induces the following adjunction
$$
\xymatrix{
\mathsf{Sp}^{\Sigma}(s\mathbf{Ab}, \widetilde{\mathbb{Z}}(S^1))\text{-}\mbox{Cat} \ar@<1ex>[d]^U \\
H\mathbb{Z}\text{-}\mbox{Cat} \ar@<1ex>[u]^Z \,,
}
$$
denoted by the same functors.
\begin{proposition}\label{glob4}
The previous adjunction is a Quillen equivalence, when the category $\mathsf{Sp}^{\Sigma}(s\mathbf{Ab}, \widetilde{\mathbb{Z}}(S^1))\text{-}\mbox{Cat}$ is endowed with the model structure of theorem~\ref{stable} and $H\mathbb{Z}\text{-}\mbox{Cat}$ is endowed with the model structure of theorem~\ref{stable1}.
\end{proposition}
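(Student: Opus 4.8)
The strong monoidal Quillen equivalence $(Z,U)$ of \cite[4.3]{Shipley} cannot be fed directly into theorem~\ref{chave}, since the target $H\mathbb{Z}\text{-}\mbox{Cat}$ is not of the form $\mathsf{Sp}^{\Sigma}(\cd,K')\text{-}\mbox{Cat}$ and the two stable structures of theorems~\ref{stable} and \ref{stable1} are detected by different functors ($Ev_0$ landing in $s\mathbf{Ab}\text{-}\mbox{Cat}$, resp.\ $i^{\ast}$ landing in $\mathsf{Sp}^{\Sigma}\text{-}\mbox{Cat}$). Nevertheless the proof mimics that of theorem~\ref{chave}, and is in fact simpler. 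The two observations that drive everything are: $(1)$ on Hom-objects the induced functors $Z$ and $U$ are just the base functors of the Shipley equivalence, so $U$ preserves fibrations, trivial fibrations and weak equivalences, reflects weak equivalences between $\Omega$-spectra, and preserves the $\Omega$-spectrum condition; $(2)$ both $Z$ and $U$ are the identity on objects, whence the functors $\ca\mapsto[\ca]$ satisfy $[U\ca]\cong[\ca]$ naturally (compare the lemma preceding corollary~\ref{[]2} and proposition~\ref{coincide}), so condition $WE2)$ of definition~\ref{we} is both preserved and reflected by $U$.

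First I would verify that $(Z,U)$ is a Quillen adjunction by means of proposition~\ref{crit}. Since the cofibrations, and hence the trivial fibrations, of theorems~\ref{stable} and \ref{stable1} coincide with those of the level structures, a trivial fibration is exactly a Hom-object trivial fibration together with surjectivity on objects, both preserved by $U$ in view of $(1)$ and $(2)$. For a fibration $F:\ca\to\cb$ between fibrant (i.e.\ $Q$-fibrant) objects, propositions~\ref{SfibS} and \ref{Sfibrant} show that $\ca$, $\cb$, and hence $U\ca$, $U\cb$, are $\Omega$-spectrum-enriched, and propositions~\ref{Qfibfib} and \ref{fibQfib} reduce being a fibration between such objects to a Hom-object fibration condition together with the induced functor $[\ca]\to[\cb]$ being an isofibration. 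Observation $(1)$ handles the Hom-objects and observation $(2)$ handles the isofibration condition (detected on $[\ca]$), so $U(F)$ is again a fibration; thus $(Z,U)$ is a Quillen adjunction.

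Next I would promote this to a Quillen equivalence by checking conditions $a)$ and $b)$ of proposition~\ref{Ncrit}. For $a)$, let $F:\ca\to\cb$ be a morphism between $Q$-fibrant objects with $U(F)$ a stable quasi-equivalence; since the $\Omega$-spectrum condition is preserved, $U(F)$ is levelwise by proposition~\ref{weak=level}, reflection of $WE1)$ follows from condition $a)$ of proposition~\ref{Ncrit} for the base equivalence $U$ applied to the $\Omega$-spectrum Hom-objects, and $WE2)$ is reflected by observation $(2)$; hence $F$ is a stable quasi-equivalence. For $b)$, let $\ca$ be cofibrant in $H\mathbb{Z}\text{-}\mbox{Cat}$ with object set $I$; then $\ca$ is cofibrant in $H\mathbb{Z}\text{-}\mbox{Mod}^{I}\text{-}\mbox{Cat}$ (remark~\ref{nova1}). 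As $(Z,U)$ is a strong monoidal Quillen equivalence, remark~\ref{nova2} gives condition $b)$ for the fixed-object-set adjunction $(Z^{I},U^{I})$, and by remarks~\ref{resolS} and \ref{condb} the unit $\eta:\ca\to Q(Z\ca)$ provides the required fibrant resolution (every map being the identity on $I$, so that $WE2')$ is automatic). This yields condition $b)$ and completes the proof.

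The main obstacle I anticipate is exactly the mismatch of detecting functors noted above: one must check, rather than quote from theorem~\ref{chave}, that the two frameworks assign the same homotopy category $[\ca]$ to a $Q$-fibrant object and that fibrations between fibrant objects reduce in both to a Hom-object condition plus an isofibration condition. Once this bookkeeping is in place---using that $U$ is the identity on objects and acts by the base Shipley functor on Hom-objects---the verification is a direct transcription of the arguments used for theorems~\ref{conditionS} and \ref{chave}.
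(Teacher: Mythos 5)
Your proposal is correct and follows essentially the same route as the paper: establish the Quillen adjunction via proposition~\ref{crit} (preservation of trivial fibrations, and of fibrations between $Q$-fibrant objects using propositions~\ref{Qfibfib} and \ref{fibQfib}), then verify conditions $a)$ and $b)$ of proposition~\ref{Ncrit} using the $\Omega$-spectrum condition, the underlying strong monoidal Quillen equivalence of Shipley, and the fixed-object-set results of remarks~\ref{nova1}, \ref{nova2}, \ref{resolS} and \ref{condb}. The only cosmetic difference is that where you reflect condition $WE2)$ via the identification $[U\ca]\cong[\ca]$ (in the spirit of proposition~\ref{coincide}), the paper invokes the `reflexion' condition $R)$ for the forgetful functor $s\mathbf{Ab}\text{-}\mbox{Cat}\rightarrow s\mathbf{Set}\text{-}\mbox{Cat}$ obtained after applying $Ev_0$; these amount to the same observation.
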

\begin{proof}
We start by showing that the previous adjunction is a Quillen adjunction, when $\mathsf{Sp}^{\Sigma}(s\mathbf{Ab})\text{-}\mbox{Cat}$ is endowed with the model structure of theorem~\ref{levelwise} and $H\mathbb{Z}\text{-}\mbox{Cat}$ is endowed with the model structure of theorem~\ref{levelR}.
Notice that we have the following commutative diagram
$$
\xymatrix{
\mathsf{Sp}^{\Sigma}(s\mathbf{Ab}, \widetilde{\mathbb{Z}}(S^1))\text{-}\mbox{Cat} \ar[d]_U \ar[rr]^{Ev_0} && s\mathbf{Ab}\text{-}\mbox{Cat} \ar[d]^U \\
H\mathbb{Z}\text{-}\mbox{Cat} \ar[r] & \mathsf{Sp}^{\Sigma}\text{-}\mbox{Cat} \ar[r]_-{Ev_0} & s\mathbf{Set}\text{-}\mbox{Cat}\,.
}
$$
Since the forgetful functor $U:\mathsf{Sp}^{\Sigma}(s\mathbf{Ab}, \widetilde{\mathbb{Z}}(S^1)) \rightarrow H\mathbb{Z}\text{-}\mbox{Cat}$ preserves fibrations and levelwise weak equivalences the claim follows.

We now show that it is a Quillen adjunction with respect to the model structures described in the proposition. Clearly $U$ preserves trivial fibrations. Since fibrations between $Q$-fibrant objects are $Q$-fibrations, see lemmas \ref{fibQfib} and \ref{Qfibfib}, these are also preserved by $U$. By proposition~\ref{crit}, we conclude that it is a Quillen adjunction.

Now, we show that $U$ is a Quillen equivalence. For this we verify conditions $a)$ and $b)$ of proposition~\ref{Ncrit}.
\begin{itemize}
\item[a)] Let $R:\ca \rightarrow \cb$ be a $\mathsf{Sp}^{\Sigma}(s\mathbf{Ab},\widetilde{\mathbb{Z}}(S^1))$-functor between $Q$-fibrant objects such that $U(R): U(\ca) \rightarrow U(\cb)$ is a stable quasi-equivalence. Since $\ca$ and $\cb$ are $Q$-fibrant, condition $a)$ of proposition~\ref{Ncrit} applied to the Quillen equivalence
$$ U: \mathsf{Sp}^{\Sigma}(s\mathbf{Ab}, \widetilde{\mathbb{Z}}(S^1)) \longrightarrow H\mathbb{Z}\text{-}\mbox{Mod}$$
implies that $R$ satisfies condition $WE1)$. Now, since the functor 
$$U:s\mathbf{Ab}\text{-}\mbox{Cat} \rightarrow s\mathbf{Set}\text{-}\mbox{Cat}$$
satisfies the `reflexion' condition $R)$ of theorem~\ref{chave}, the $\mathsf{Sp}^{\Sigma}(s\mathbf{Ab}, \widetilde{\mathbb{Z}}(S^1))$-functor $R$ satisfies condition $L2')$ and so we conclude that it is a stable quasi-equivalence.
\item[b)] Let $\ca$ be a cofibrant $H\mathbb{Z}$-category and let us denote by $I$ its set of objects. 
Notice, that in particular $\ca$ is cofibrant in $(H\mathbb{Z}\text{-}\mbox{Mod})^I\text{-}\mbox{Cat}$, with respect to the module structure of remark~\ref{nova1}. Now, since the adjunction $(L,F)$ is a strong monoidal Quillen equivalence (\ref{smwe}), remark~\ref{condb} implies that condition $b)$ of proposition~\ref{Ncrit} is verified for the adjunction $(L^I,F^I)$. By remark~\ref{nova2}, the $\spc$-functor $\eta_{\ca}:\ca \rightarrow Q(\ca)$ is a fibrant resolution in $(H\mathbb{Z}\text{-}\mbox{Mod})^I\text{-}\mbox{Cat}$ and so we conclude that condition $b)$ is verified.
\end{itemize}
\end{proof}
\section{Global picture}\label{global}
Notice that remarks \ref{glob1}, \ref{glob2}, \ref{glob3} and proposition~\ref{glob4} furnishes us the following (four steps) zig-zag of Quillen equivalences:
$$
\xymatrix{
H\mathbb{Z}\text{-}\mbox{Cat} \ar@<-1ex>[d]_Z \\
\mathsf{Sp}^{\Sigma}(s\mathbf{Ab}, \widetilde{\mathbb{Z}}(S^1))\text{-}\mbox{Cat} \ar@<-1ex>[u]_U \ar@<1ex>[d]^{\phi^{\ast}N} \\
\mathsf{Sp}^{\Sigma}(Ch_{\geq 0}, \mathbb{Z}[1])\text{-}\mbox{Cat} \ar@<1ex>[u]^{L_{cat}} \ar@<-1ex>[d]_i \\
\mathsf{Sp}^{\Sigma}(Ch, \mathbb{Z}[1])\text{-}\mbox{Cat} \ar@<-1ex>[u]_{\tau_{\geq 0}} \ar@<1ex>[d]^{Ev_0} \\
\mathsf{dgcat} \ar@<1ex>[u]^{F_0} \,.
}
$$
\begin{notation}
We denote by 
$$\mathbb{H}: \mathsf{Ho}(\mathsf{dgcat}) \stackrel{\sim}{\longrightarrow} \mathsf{Ho}(H\mathbb{Z}\text{-}\mbox{Cat})$$
and by
$$ \Theta: \mathsf{Ho}(H\mathbb{Z}\text{-}\mbox{Cat}) \stackrel{\sim}{\longrightarrow} \mathsf{Ho}(\mathsf{dgcat})$$
the composed derived equivalences.
\end{notation}
\begin{remark}
If, in the previous diagram, we restrict ourselves to enriched categories with only one object, we recover the zig-zag of Quillen equivalences constructed by Shipley in \cite[2.10]{Shipley} and \cite[4.9]{Shipley}.
\end{remark}
\section{$THH$ and $TC$ for DG categories}\label{THH}
In this chapter, we use the equivalence of the previous chapter to define a topological Hochschild and cyclic homology theory for dg categories.

Let us start by recalling from section $3$ of \cite{Mandell}, the construction of topological Hochschild homology ($THH$) and topological cyclic homology ($TC$) in the context of spectral categories. 

{\bf Construction:}
Let $\ci$ be the category whose objects are the finite sets $\mathbf{n}=\{1, \ldots, n\}$ (including ${\bf 0}=\{ \}$), and whose morphisms are the injective maps. For a symmetric spectrum $T$ (of pointed simplicial sets), we write $T_n$ for the $n$-th pointed simplicial set. The association $\mathbf{n} \mapsto \Omega^n|T_n|$ extends to a functor from $\ci$ to spaces, where $|-|$ denotes the geometric realization.

More generally, given symmetric spectra $T^0, \ldots, T^q$ and a space $X$, we obtain a functor from $\ci^{q+1}$ to spaces, which sends $(\mathbf{n}_0, \ldots, \mathbf{n}_q)$ to 
$$\Omega^{n_0+\cdots+n_q}(|T^q_{n_q}\wedge \cdots \wedge T^0_{n_0}|\wedge X)\,.$$

Now, let $\ca$ be a spectral category. Let $V(\ca,X)_{n_0, \ldots ,n_q}$ be the functor from $\ci^{q+1}$ to spaces defined by
$$\Omega^{n_0+\cdots+n_q}(\bigvee |\ca(c_{q-1},c_q)_{n_q} \wedge \cdots \wedge \ca(c_0,c_1)_{n_1}\wedge \ca(c_q,  c_0)_{n_0}| \wedge X)\,,$$
where the sum $\bigvee$ is taken over the $(q+1)$-tuples $(q_0, \cdots, q_n)$ of objects of $\ca$.

Now, define
$$ THH_q(\ca)(X):= \mbox{hocolim}_{\ci^{q+1}}V(\ca,X)_{n_0, \cdots, n_q}\,.$$
This construction assembles into a simplicial space and we write $THH(\ca)(X)$ for its geometric realization. We obtain in this way a functor $THH(\ca)(X)$ in the variable $X$. If we restrict ourselves to the spheres $S^n$, we obtain finally the symmetric spectrum of {\em topological Hochschild homology} of $\ca$ denoted by $THH(\ca)$. 

For the definition of topological cyclic homology involving the `cyclotomic' structure on $THH(\ca)$, the author is invited to consult section $3$ of \cite{Mandell}. We denote by $TC(\ca)$ the spectrum of {\em topological cyclic homology} of $\ca$.
\begin{remark}
By theorem~\cite[4.9]{Mandell} (and proposition~\cite[3.8]{Mandell}), the functors
$$THH, TC: \mathsf{Sp}^{\Sigma}\text{-}\mbox{Cat} \longrightarrow \mathsf{Sp}\,,$$
send quasi-equivalences to weak equivalences and so descend to the homotopy categories.
\end{remark}
Now, let $i: \mathbb{S} \rightarrow H\mathbb{Z}$ be the unique morphism of commutative symmetric ring spectra. By proposition~\ref{Top}, we have the restriction/extension of scalars Quillen adjunction
$$
\xymatrix{
H\mathbb{Z}\text{-}\mbox{Cat} \ar@<1ex>[d]^{i^{\ast}} \\
\mathsf{Sp}^{\Sigma}\text{-}\mbox{Cat} \ar@<1ex>[u]^{i_!}\,.
}
$$
Notice that the equivalence of the previous chapter
$$ \mathbb{H}:\mathsf{Ho}(\mathsf{dgcat}) \stackrel{\sim}{\longrightarrow} \mathsf{Ho}(H\mathbb{Z}\text{-}\mbox{Cat})$$
and the previous adjunction furnishes us well-defined topological Hochschild and cyclic homology theories 
$$
\xymatrix{
THH, TC: \mathsf{Ho}(\mathsf{dgcat}) \ar[r]^-{\mathbb{H}}_-{\sim} \ar@{-->}[drr]& \mathsf{Ho}(H\mathbb{Z}\text{-}\mbox{Cat})\ar[r]^{i^{\ast}} & \mathsf{Ho}(\mathsf{Sp}^{\Sigma}\text{-}\mbox{Cat}) \ar[d]^{THH,TC} \\
&  & \mathsf{Ho}(\mathsf{Sp})
}
$$
for dg categories (up to quasi-equivalence). Since the $THH$ and $TC$ homology carries much more torsion information than its Hochschild and cyclic homology, we obtain in this way a much richer invariant of dg categories and of (algebraic) varieties.

\begin{example}\label{exTop}
Let $X$ be a smooth projective algebraic variety. We associate to $X$ a dg model, i.e. a dg category $\cd^b_{dg}(\mbox{coh}(X))$, well defined up to isomorphism in $\mathsf{Ho}(\mathsf{dgcat})$, such that the triangulated category $H^0(\cd^b_{dg}(\mbox{coh}(X)))$ is equivalent the bounded derived category of quasi-coherent sheaves on $X$. For example, for $\cd^b_{dg}(\mbox{coh}(X))$, we can take the dg category of left bounded complexes of injective $\co_X$-modules whose homology is bounded and coherent. We then define the $THH$ and $TC$ of $X$ as the $THH$ and $TC$ of the spectral category $i^{\ast}(\mathbb{H}(\cd^b_{dg}(\mbox{coh}(X))))$.
\end{example}
\subsection{$THH$ and $TC$ as additive invariants}
In this section, we use freely the language of derivators, see chapter $1$ of~\cite{CN} for the main notions.
Notice that the constructions of the previous chapters furnishes us morphisms of derivators
$$ THH, TC : \mathsf{HO}(\dgcat)_{QE} \longrightarrow \mathsf{HO}(\mathsf{Sp})\,,$$
from the derivator $\mathsf{HO}(\dgcat)_{QE}$ associated with the Quillen model structure \cite[1.8]{These} to the derivator associated to the category of spectra. We now show that $THH$ and $TC$ descent to the derivator $\mathsf{HO}(\dgcat)$ associated with the Morita model structure \cite[2.27]{These}. For this, we start by recalling some properties of $THH$ and $TC$ in the context of spectral categories.
\begin{definition}{(see section $4$ from \cite{Mandell})}
A spectral functor $F:\cc \rightarrow \cd$ is a {\em Morita equivalence} if:
\begin{itemize}
\item[-] for all objects $x, y \in \cc$, the morphism
$$ F(x,y):\cc(x,y) \longrightarrow \cd(Fx,Fy)$$
is a stable weak equivalence in $\mathsf{Sp}^{\Sigma}$ and 
\item[-] the smallest thick (i.e. closed under direct factors) triangulated subcategory of $\mathsf{Ho}(\cd\text{-}\mbox{Mod})$ (see \ref{modules}) generated by the image of $F$ is equivalent to the full subcategory of compact objects in $\mathsf{Ho}(\cd\text{-}\mbox{Mod})$.
\end{itemize}
\end{definition}
\begin{remark}\label{Morinv}
Observe that theorem \cite[4.12]{Mandell} and proposition \cite[3.8]{Mandell} imply that if a spectral functor $F$ is a Morita equivalence, then $THH(F)$ and $TC(F)$ are weak equivalences.
\end{remark}
Now, let us return to the context of dg categories.
\begin{proposition}\label{Mor-Sp}
Let $F:\ca \rightarrow \cb$ be a Morita dg functor (see section $2.5$ of \cite{These}). Then the associated spectral functor $i^{\ast}\mathbb{H}(F)$ is a Morita equivalence.
\end{proposition}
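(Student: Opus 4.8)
The plan is to verify, for the spectral functor $i^{\ast}\mathbb{H}(F)$, the two conditions in the definition of a Morita equivalence of spectral categories. I would first record what the hypothesis gives: a Morita dg functor $F:\ca \to \cb$ is exactly a dg functor for which the derived extension of scalars $\mathbb{L}F_{!}:\mathsf{Ho}(\ca\text{-}\mbox{Mod}) \to \mathsf{Ho}(\cb\text{-}\mbox{Mod})$ is an equivalence. Since $\mathbb{L}F_{!}$ sends the representable module $\ca(-,x)$ to $\cb(-,Fx)$, and the derived $\mbox{Hom}$ between representables computes the Hom-complexes, full faithfulness of $\mathbb{L}F_{!}$ forces $F$ to be \emph{quasi-fully-faithful} (each $F(x,y):\ca(x,y)\to\cb(Fx,Fy)$ is a quasi-isomorphism), while essential surjectivity forces the representables $\{\cb(-,Fx)\}_{x \in \ca}$ to thickly generate the compact objects of $\mathsf{Ho}(\cb\text{-}\mbox{Mod})$. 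These are precisely the two conditions I must transport to the spectral side.

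For the full faithfulness condition I would compute the Hom-objects of $i^{\ast}\mathbb{H}(\cb)$. Restricting the four step zig-zag of Theorem~\ref{M1} to enriched categories with a single object recovers Shipley's zig-zag, so for any dg category $\cb$ the $K$-spectrum $i^{\ast}\mathbb{H}(\cb)(x,y)$ is stably equivalent to the Eilenberg-Mac Lane spectrum of the Hom-complex $\cb(x,y)$, with $\pi_{\ast}\cong H_{\ast}(\cb(x,y))$. As the restriction functor $i^{\ast}$ preserves stable equivalences and $F$ is quasi-fully-faithful, each $i^{\ast}\mathbb{H}(F)(x,y)$ is then a stable equivalence in $\mathsf{Sp}^{\Sigma}$. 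Equivalently, one may factor $F$ as a quasi-equivalence $\ca \to \cb_0$ onto the full dg subcategory $\cb_0$ on the image objects, followed by a full inclusion $\cb_0 \hookrightarrow \cb$; the functor $i^{\ast}\mathbb{H}$ carries quasi-equivalences to stable quasi-equivalences and full inclusions to spectral functors that are the identity on the relevant Hom-objects, giving the same conclusion.

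For the generation condition I would produce a triangulated functor $\Psi:\mathsf{Ho}(\cb\text{-}\mbox{Mod}) \to \mathsf{Ho}(i^{\ast}\mathbb{H}(\cb)\text{-}\mbox{Mod})$ that carries representable modules to representable modules and is compatible with $F$ and $i^{\ast}\mathbb{H}(F)$ on objects. I obtain $\Psi$ by composing the equivalence $\mathsf{Ho}(\cb\text{-}\mbox{Mod}) \simeq \mathsf{Ho}(\mathbb{H}(\cb)\text{-}\mbox{Mod})$ induced on module categories by the zig-zag (each Quillen equivalence of enriched categories descends to modules and respects representables) with the functor induced by post-composition with the lax monoidal restriction $i^{\ast}:H\mathbb{Z}\text{-}\mbox{Mod} \to \mathsf{Sp}^{\Sigma}$, which sends $\mathbb{H}(\cb)(-,y)$ to $i^{\ast}\mathbb{H}(\cb)(-,y)$ and is exact. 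Because a triangulated functor preserves thick closures, the dg generation relation $\cb(-,y)\in \mbox{thick}\{\cb(-,Fx)\}_{x}$ is sent to $i^{\ast}\mathbb{H}(\cb)(-,y) \in \mbox{thick}\{i^{\ast}\mathbb{H}(\cb)(-,Fx)\}_{x}$; since all representables thickly generate the compact objects of $\mathsf{Ho}(i^{\ast}\mathbb{H}(\cb)\text{-}\mbox{Mod})$, the image of $i^{\ast}\mathbb{H}(F)$ generates them too, which is the second condition.

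The main obstacle is the bookkeeping through the four step zig-zag: both the identification $i^{\ast}\mathbb{H}(\cb)(x,y)\simeq H(\cb(x,y))$ on Hom-objects and the construction of the module-level functor $\Psi$ rest on knowing that each of the four Quillen equivalences is compatible with the enrichment in a manner that descends to module categories and preserves representables. I would stress that I deliberately do \emph{not} claim $\Psi$ to be an equivalence — the restriction $i^{\ast}$ on modules is far from fully faithful on compact objects — and that only its exactness together with its effect on representables is needed, which is exactly what makes the generation argument survive the passage from $H\mathbb{Z}$-modules to symmetric spectra.
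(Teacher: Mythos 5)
Your proposal is correct in outline but follows a genuinely different route from the paper. You unfold the hypothesis into its two standard consequences (quasi-full-faithfulness of $F$, plus thick generation of the compact objects of $\mathsf{Ho}(\cb\text{-}\mbox{Mod})$ by the representables $\cb(-,Fx)$) and transport each condition separately to the spectral side, the second by means of an exact comparison functor $\Psi$ on module categories that preserves representables. The paper never touches module categories directly: it invokes the characterization of \cite[2.35]{These} and \cite[2.14]{These} that $F$ is a Morita dg functor if and only if the induced dg functor $F_f$ between Morita fibrant replacements is a quasi-equivalence, observes that $i^{\ast}\mathbb{H}$ sends quasi-equivalences to stable quasi-equivalences (hence to Morita equivalences), quotes \cite[4.8]{Mandell} for the statement that the fibrant resolution maps $j_{\ca}$ and $j_{\cb}$ --- fully faithful Morita dg functors into Morita fibrant dg categories --- become Morita equivalences after applying $i^{\ast}\mathbb{H}$, and concludes by two out of three in the resulting commutative square. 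What the paper's argument buys is brevity: all of the generation bookkeeping is delegated to the cited proposition of Blumberg--Mandell and to the Morita fibrant replacement theory of \cite{These}. What your argument buys is self-containedness at the level of ideas, but at the cost of the two places where you yourself locate the real work: (i) the natural identification of $i^{\ast}\mathbb{H}(\cb)(x,y)$ with the Eilenberg--Mac Lane spectrum of $\cb(x,y)$, and (ii) the descent of the four Quillen equivalences to module categories compatibly with representables; neither is established in the paper, and (ii) requires the many-object version of the Schwede--Shipley module comparison. One small imprecision: in your alternative argument for the first condition, $i^{\ast}\mathbb{H}$ does not carry the full inclusion $\cb_0 \hookrightarrow \cb$ to a spectral functor that is literally the identity on Hom-objects, since $\mathbb{H}$ involves (co)fibrant replacements; it only induces stable equivalences there, which is all you need and is what your primary argument already provides.
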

\begin{proof}
By propositions \cite[2.35]{These} and \cite[2.14]{These}, $F$ is a Morita dg functor if and only if $F_f$ 
$$ 
\xymatrix{
\ca \ar[r]^F \ar[d]_{j_{\ca}} & \cb \ar[d]^{j_{\cb}} \\
\ca_f \ar[r]_{F_f} & \cb_f
}
$$
is a quasi-equivalence, where $(-)_f$ denotes a functorial fibrant resolution functor in the model category \cite[2.27]{These}. Notice that if we apply the composed functor $i^{\ast}\mathbb{H}$ to the above diagram, we obtain a diagram of spectral categories
$$ 
\xymatrix{
i^{\ast}\mathbb{H}(\ca) \ar[r]^{i^{\ast}\mathbb{H}(F)} \ar[d]_{i^{\ast}\mathbb{H}(j_{\ca})} & i^{\ast}\mathbb{H}(\cb) \ar[d]^{i^{\ast}\mathbb{H}(j_{\cb})} \\
i^{\ast}\mathbb{H}(\ca_f) \ar[r]_{i^{\ast}\mathbb{H}(F_f)} & i^{\ast}\mathbb{H}(\cb_f)\,,
}
$$
with $i^{\ast}\mathbb{H}(F_f)$ a quasi-equivalence (and so a Morita equivalence). Moreover, since $j_{\ca}$ and $j_{\cb}$ are fully faithful Morita dg functors with values in Morita fibrant dg categories (see \cite[2.34]{These}), proposition \cite[4.8]{Mandell} implies that $i^{\ast}\mathbb{H}(j_{\ca})$ and $i^{\ast}\mathbb{H}(j_{\cb})$ are Morita equivalences. In conclusion, by the two out of three property, the spectral functor $i^{\ast}\mathbb{H}(F)$ is also a Morita equivalence.
\end{proof}
\begin{remark}
By remark~\ref{Morinv} and proposition~\ref{Mor-Sp}, we obtain well-defined morphisms of derivators
$$ THH, TC: \mathsf{HO}(\dgcat) \longrightarrow \mathsf{HO}(\mathsf{Sp})\,.$$ 
\end{remark}
\begin{proposition}\label{THHAdd}
The morphisms of derivators $THH$ and $TC$ are additive invariants of dg categories (see \cite[3.86]{These}).
\end{proposition}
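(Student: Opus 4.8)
The plan is to verify the three conditions that constitute the definition of an additive invariant \cite[3.86]{These}: that the morphisms of derivators preserve the point, commute with filtered homotopy colimits, and send split short exact sequences of dg categories to direct sums. Morita invariance, which is precisely what allows $THH$ and $TC$ to descend from the quasi-equivalence derivator to the Morita derivator $\mathsf{HO}(\dgcat)$, has already been established in Proposition~\ref{Mor-Sp} and Remark~\ref{Morinv}, so only these three structural properties remain. The guiding idea throughout is to transport each property from the setting of spectral categories, where the corresponding statements for $THH$ and $TC$ are due to Blumberg and Mandell \cite{Mandell}, along the composite $i^{\ast}\mathbb{H}$, exploiting that $\mathbb{H}$ is an equivalence of derivators (hence preserves all homotopy colimits and all exact sequences) and that $i^{\ast}$ is a well-behaved restriction functor.

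Point-preservation is immediate: the empty (equivalently, the zero) dg category is sent by $i^{\ast}\mathbb{H}$ to a trivial spectral category, whose $THH$ and $TC$ are the zero spectrum. For filtered homotopy colimits, I would first observe that the construction recalled at the start of this chapter realizes $THH(\ca)$ as a homotopy colimit over the indexing category $\ci^{q+1}$ of a wedge indexed by tuples of objects of $\ca$; since filtered homotopy colimits commute with these homotopy colimits, with geometric realization, and with the formation of the relevant smash powers, the functor $THH$ (and hence, via its cyclotomic refinement, $TC$) preserves filtered homotopy colimits of spectral categories. It then suffices to note that both $\mathbb{H}$ and $i^{\ast}$ preserve filtered homotopy colimits: the former because it is an equivalence of derivators, and the latter because restriction of scalars along $i:\mathbb{S}\rightarrow H\mathbb{Z}$ is computed on underlying symmetric spectra, hence creates filtered colimits and preserves (indeed reflects) stable equivalences, so the induced functor $i^{\ast}:H\mathbb{Z}\text{-}\mbox{Cat}\rightarrow \mathsf{Sp}^{\Sigma}\text{-}\mbox{Cat}$ commutes with filtered homotopy colimits.

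The main obstacle is the last condition: additivity with respect to split short exact sequences. Given a split short exact sequence $\ca \rightarrow \cb \rightarrow \cc$ of dg categories, the plan is to apply $i^{\ast}\mathbb{H}$ and argue that the resulting diagram of spectral categories is again a split short exact sequence in the sense required by the additivity theorem of \cite{Mandell}. Since $\mathbb{H}$ is an equivalence of derivators it carries the split exact structure to a split exact sequence of $H\mathbb{Z}$-categories; the delicate point is to check that the restriction $i^{\ast}$ preserves both the fully faithful inclusion and the identification of the quotient, which amounts to verifying that $i^{\ast}$ respects the triangulated module-category data (see Remark~\ref{modules}) underlying the notion of a split exact sequence. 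Once this is in place, Blumberg and Mandell's additivity theorem~\cite[4.12]{Mandell} (together with proposition~\cite[3.8]{Mandell}) yields a direct sum decomposition $THH(\cb)\simeq THH(\ca)\oplus THH(\cc)$, and likewise for $TC$, which is exactly the required additivity. I expect the bookkeeping around the quotient spectral category, namely ensuring that the homotopy-theoretic quotient computed in $\mathsf{Sp}^{\Sigma}\text{-}\mbox{Cat}$ matches the image under $i^{\ast}\mathbb{H}$ of the dg quotient, to be the most technical part of the argument.
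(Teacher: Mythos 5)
Your proposal follows essentially the same route as the paper: verify point-preservation and commutation with filtered homotopy colimits directly, then transport a split short exact sequence of dg categories along $i^{\ast}\mathbb{H}$ to a split short exact sequence of spectral categories and invoke Blumberg--Mandell to obtain the direct-sum decomposition. The only discrepancy is the citation for the final step: the paper appeals to the localization theorem \cite[6.1]{Mandell} (which produces the split triangles), rather than \cite[4.12]{Mandell}, which is the Morita-invariance statement used earlier in Remark~\ref{Morinv}.
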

\begin{proof}
The proof will consist on verifying that $THH$ and $TC$ satisfy all the conditions of \cite[3.86]{These}. Clearly $THH$ and $TC$ preserve the point. Moreover, by construction, the morphisms of derivators
$$i^{\ast}: \mathsf{HO}(H\mathbb{Z}\text{-}\mbox{Cat}) \longrightarrow \mathsf{HO}(\mathsf{Sp}^{\Sigma}\text{-}\mbox{Cat})$$
and
$$ THH,TC: \mathsf{HO}(\mathsf{Sp}^{\Sigma}\text{-}\mbox{Cat}) \longrightarrow \mathsf{HO}(\mathsf{Sp})$$
commute with filtered homotopy colimits. It remains to show that $THH$ and $TC$ satisfy the additivity condition A). Let
$$ 
\xymatrix{
0 \ar[r] & \mathcal{A} \ar[r]_{i_{\mathcal{A}}} & \mathcal{B} \ar@<-1ex>[l]_R
\ar[r]_P & \mathcal{C} \ar@<-1ex>[l]_{i_{\mathcal{C}}} \ar[r] & 0 \,,
}
$$
be a split short exact sequence of dg categories~\cite[3.68]{These}.
Observe that by applying the functor $i^{\ast}\mathbb{H}(-)$, we obtain a split short exact sequence of spectral categories (see section $6$ of \cite{Mandell})
$$ 
\xymatrix{
0 \ar[r] & i^{\ast}\mathbb{H}(\mathcal{A}) \ar[r] & i^{\ast}\mathbb{H}(\mathcal{B}) \ar@<-1ex>[l]
\ar[r] & i^{\ast}\mathbb{H}(\mathcal{C}) \ar@<-1ex>[l] \ar[r] & 0 \,.
}
$$
By the localization theorem \cite[6.1]{Mandell}, we obtain split triangles
$$  
\xymatrix{ 
THH(\ca) \ar[r] & THH(\cb) \ar@<-1ex>[l]
\ar[r] & THH(\cc) \ar@<-1ex>[l] \ar[r] & THH(\ca)[1]\,,
}
$$
$$  
\xymatrix{ 
TC(\ca) \ar[r] & TC(\cb) \ar@<-1ex>[l]
\ar[r] & TC(\cc) \ar@<-1ex>[l] \ar[r] & TC(\ca)[1] 
}
$$
in $\mathsf{Ho}(\mathsf{Sp})$. In conclusion, the dg functors $i_{\ca}$ and $i_{\cc}$ induce isomorphisms
$$THH(\ca) \oplus THH(\cc) \stackrel{\sim}{\longrightarrow} THH(\cb)\,,$$
$$TC(\ca) \oplus TC(\cc) \stackrel{\sim}{\longrightarrow} TC(\cb)$$
in $\mathsf{Ho}(\mathsf{Sp})$ and so the proof is finished.
\end{proof}
\section{From $K$-theory to $THH$}
In this chapter, we construct non-trivial natural transformations from the algebraic $K$-theory groups functors to the topological Hochschild homology groups functors.

\begin{notation}
We denote by
$$ K_n(-): \mathsf{Ho}(\dgcat) \longrightarrow \mbox{Mod}\text{-}\mathbb{Z}, \,\, n \geq 0$$
be the $n$th $K$-theory group functor \cite{Quillen1} \cite{Wald} and by
$$ THH_j(-): \mathsf{Ho}(\dgcat) \longrightarrow \mbox{Mod}\text{-}\mathbb{Z}, \,\, j \geq 0$$
the $j$th topological Hochschild homology group functor.
\end{notation}
\begin{theorem}\label{natural}
Suppose that we are working over the ring of integers $\mathbb{Z}$. Then we have non-trivial natural transformations
$$
\begin{array}{rcl}
 \gamma_n: K_n(-) & \Rightarrow & THH_n(-)\,,\,\,\,\,\,\, n \geq 0\\
 \gamma_{n,r}: K_n(-) & \Rightarrow & THH_{n+2r-1}(-)\,,\,\,\,\,\,\, n\geq 0\,,\ r\geq 1
\end{array}
$$
from the algebraic $K$-theory groups to the topological Hochschild homology ones.
\end{theorem}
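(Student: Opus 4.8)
The plan is to avoid constructing the transformations by hand and instead read them off from a representability statement, deriving both families from the universal property of connective algebraic $K$-theory as the \emph{universal additive invariant} of dg categories established in \cite{These}. The other two ingredients are Proposition~\ref{THHAdd}, which exhibits $THH$ as an additive invariant, and B\"okstedt's computation of $THH_*(\mathbb{Z})$.

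First I would recall that, as a morphism of derivators $\mathsf{HO}(\dgcat) \to \mathsf{HO}(\mathsf{Sp})$, connective $K$-theory is corepresented in the additive motivator by the unit dg category $\underline{\mathbb{Z}}$ (one object, endomorphism dg algebra $\mathbb{Z}$); concretely $K(\ca)$ is the mapping spectrum out of the image of $\underline{\mathbb{Z}}$. Since, by Proposition~\ref{THHAdd}, $THH$ is an additive invariant, so is every shift $\Omega^{j}THH$, and the (stable, enriched) Yoneda lemma in the additive motivator yields a natural identification of mapping spectra
$$ \mathrm{RHom}(K, \Omega^{j}THH) \simeq \Omega^{j}THH(\underline{\mathbb{Z}})\,. $$
Taking $\pi_0$, natural transformations $K \Rightarrow \Omega^{j}THH$ — equivalently, families $K_n(-) \Rightarrow THH_{n+j}(-)$ compatible in every degree $n$ — are classified by the single group $THH_{j}(\underline{\mathbb{Z}})$.

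Next I would evaluate this classifying group. By construction $THH(\underline{\mathbb{Z}})$ is the topological Hochschild homology of the spectral category $i^{\ast}\mathbb{H}(\underline{\mathbb{Z}})$, which is the one-object spectral category $H\mathbb{Z}$; hence $THH(\underline{\mathbb{Z}}) \simeq THH(H\mathbb{Z}) = THH(\mathbb{Z})$ and B\"okstedt's theorem applies. For $j = 0$ one has $THH_0(\mathbb{Z}) = \mathbb{Z}$, and its generator produces $\gamma_n : K_n(-) \Rightarrow THH_n(-)$; unwinding the identification shows that this transformation is the trace map of Blumberg--Mandell \cite{Mandell}. For $j = 2r-1$ one has $THH_{2r-1}(\mathbb{Z}) = \mathbb{Z}/r$, whose generator produces $\gamma_{n,r} : K_n(-) \Rightarrow THH_{n+2r-1}(-)$.

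Finally, non-triviality is automatic from the shape of the argument: the classification above is an \emph{isomorphism} of abelian groups, so a non-zero class in $THH_{j}(\mathbb{Z})$ corresponds to a non-zero, hence non-trivial, natural transformation, already detected on $\underline{\mathbb{Z}}$ itself (where $\gamma_0$ restricts to the rank map on $K_0(\mathbb{Z}) = \mathbb{Z}$). I expect the main obstacle to be precisely the representability step: one must make rigorous that $K$ is corepresentable by $\underline{\mathbb{Z}}$ in the additive motivator and that the enriched Yoneda lemma computes $\mathrm{RHom}(K, \Omega^{j}THH)$ as displayed above, working entirely within the derivator formalism of \cite{These} and \cite{CN}. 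Once this is in place, the remaining inputs — additivity of $THH$ (Proposition~\ref{THHAdd}), the identification $THH(\underline{\mathbb{Z}}) \simeq THH(\mathbb{Z})$, and B\"okstedt's computation — are routine.
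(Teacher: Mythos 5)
Your proposal follows essentially the same route as the paper: both use Proposition~\ref{THHAdd} to descend $THH$ to the additive motivator, the corepresentability of $K_n(-)$ by (a shift of) the unit dg category $\cu_a(\mathbb{Z})$ established in \cite{Duke}, the Yoneda lemma to identify $\mathsf{Nat}(K_n(-),THH_j(-))$ with $THH_{j-n}(\mathbb{Z})$, and B{\"o}kstedt's computation to produce the non-trivial classes. The only (cosmetic) difference is that you phrase the Yoneda step at the level of mapping spectra and then take $\pi_0$, whereas the paper works degreewise with the corepresenting objects $\cu_a(\mathbb{Z})[n]$.
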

\begin{proof}
By proposition \ref{THHAdd}, the morphism of derivators 
$$ THH:\mathsf{HO}(\dgcat) \longrightarrow \mathsf{HO}(\mathsf{Sp})$$
is an additive invariant and so by theorem~\cite[3.85]{These} it descends to the additive motivator of dg categories $\cm^{add}_{dg}$ (see \cite[3.82]{These}) and induces a triangulated functor (still denoted by $THH$)
$$ THH: \cm^{add}_{dg}(e) \longrightarrow \mathsf{Ho}(\mathsf{Sp})\,.$$
Recall that the topological Hochschild homology functor $THH_j(-), \,\,j\geq 0$ is obtained by composing $THH$ with the functor
$$ \pi^s_j: \mathsf{Ho}(\mathsf{Sp}) \longrightarrow \mbox{Mod}\text{-}\mathbb{Z}, \,\, j \geq 0\,.$$
Now, by~\cite[17.2]{Duke}, the functor
$$ K_n(-): \cm_{dg}^{add}(e) \longrightarrow \mbox{Mod}\text{-}\mathbb{Z}$$
is co-represented by $\cu_a(\mathbb{Z})[n]$. This implies, by the Yoneda lemma, that
$$ \mathsf{Nat}(K_n(-), THH_j(-)) \simeq THH_j(\cu_a(\mathbb{Z})[n])\,,$$
where $\mathsf{Nat}(-,-)$ denotes the abelian group of natural transformations.
Notice that we have the following isomorphisms
$$
THH_j(\cu_a(\mathbb{Z})[n]) \simeq THH_{j}(\cu_a(\mathbb{Z}))[n] \simeq THH_{j-n}(\cu_a(\mathbb{Z})) \simeq THH_{j-n}(\mathbb{Z})\,.
$$ 
By B{\"o}kstedt, we have the following calculation (see \cite[0.2.3]{Dundas})
\[
THH_j(\mathbb{Z})=\left\{ \begin{array}{lcl}
\mathbb{Z} & \text{if} & j=0 \\
\mathbb{Z}/r\mathbb{Z} & \text{if} & j=2r-1, \,\, r \geq 1\\
0 & & \text{otherwise}\,.
\end{array} \right.
\]
In conclusion, the canonical generators of $\mathbb{Z}$ and $\mathbb{Z}/r\mathbb{Z}$ furnishes us, by the Yoneda lemma, non-trivial natural transformations. This finishes the proof.
\end{proof}
\begin{theorem}\label{natural1}
Let $p$ be a prime number. If we work over the ring $\mathbb{Z}/p\mathbb{Z}$ we have non-trivial natural transformations
$$
\begin{array}{rcl}
 \gamma_n: K_n(-) & \Rightarrow & THH_{n+2r}(-)\,,\,\,\,\,\,\, n\,,\,r \geq 0\,.\\
\end{array}
$$
\end{theorem}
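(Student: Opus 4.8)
The plan is to run the proof of Theorem~\ref{natural} essentially verbatim, replacing the base ring $\mathbb{Z}$ by $\mathbb{F}_p := \mathbb{Z}/p\mathbb{Z}$ and substituting B{\"o}kstedt's computation of $THH_*(\mathbb{F}_p)$ for his computation of $THH_*(\mathbb{Z})$. Since every construction of the paper is valid over an arbitrary unital commutative base ring $R$ (the standing convention of the Preliminaries), all the structural inputs used in Theorem~\ref{natural}---in particular Proposition~\ref{THHAdd}, asserting that $THH$ is an additive invariant, and the co-representability statement \cite[17.2]{Duke}---remain at our disposal with $R=\mathbb{F}_p$.

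First I would recall, via Proposition~\ref{THHAdd} and \cite[3.85]{These}, that the additive invariant $THH$ descends to the additive motivator and induces a triangulated functor $THH:\cm^{add}_{dg}(e) \longrightarrow \mathsf{Ho}(\mathsf{Sp})$, with $THH_j(-)$ obtained by post-composing with the stable homotopy group functor $\pi^s_j$. Over the base $\mathbb{F}_p$, the functor $K_n(-):\cm^{add}_{dg}(e) \to \mbox{Mod}\text{-}\mathbb{Z}$ is co-represented by $\cu_a(\mathbb{F}_p)[n]$ by \cite[17.2]{Duke}, so the Yoneda lemma gives the identification
$$
\mathsf{Nat}(K_n(-), THH_j(-)) \simeq THH_j(\cu_a(\mathbb{F}_p)[n]) \simeq THH_{j-n}(\cu_a(\mathbb{F}_p)) \simeq THH_{j-n}(\mathbb{F}_p),
$$
using as before that $THH$ is compatible with the shift $[n]$ and that $THH$ of the one-object dg category $\cu_a(\mathbb{F}_p)$ agrees (through $i^{\ast}\mathbb{H}$) with the topological Hochschild homology of the ring $\mathbb{F}_p$.

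Finally I would feed in B{\"o}kstedt's celebrated calculation $THH_*(\mathbb{F}_p) \simeq \mathbb{F}_p[x]$ with $|x|=2$; equivalently $THH_{2r}(\mathbb{F}_p) \simeq \mathbb{Z}/p\mathbb{Z}$ for $r\geq 0$ and $THH_j(\mathbb{F}_p)=0$ for $j$ odd. Setting $j=n+2r$ yields $THH_{j-n}(\mathbb{F}_p)=THH_{2r}(\mathbb{F}_p)\simeq \mathbb{Z}/p\mathbb{Z} \neq 0$, and the canonical generator of this group furnishes, through the Yoneda identification above, the desired non-trivial natural transformation $\gamma_n:K_n(-) \Rightarrow THH_{n+2r}(-)$.

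There is no genuine obstacle here: the argument is formally identical to that of Theorem~\ref{natural}. The only conceptual point worth flagging is the parity shift---the nonzero groups of $THH_*(\mathbb{F}_p)$ sit in the \emph{even} degrees $2r$, whereas over $\mathbb{Z}$ the torsion sits in the \emph{odd} degrees $2r-1$---and this is precisely what forces the indexing $THH_{n+2r}(-)$ in the statement rather than the $THH_{n+2r-1}(-)$ of Theorem~\ref{natural}. The one thing to verify with care is that both \cite[17.2]{Duke} and the additivity of $THH$ are truly insensitive to the base ring, but each rests only on formal properties (split short exact sequences, filtered homotopy colimits, preservation of the point) that hold over any commutative $R$, so the passage to $\mathbb{F}_p$ is harmless.
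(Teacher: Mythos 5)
Your proposal is correct and follows exactly the paper's own route: the paper proves Theorem~\ref{natural1} by declaring the argument identical to that of Theorem~\ref{natural} and substituting B{\"o}kstedt's computation $THH_j(\mathbb{Z}/p\mathbb{Z})\simeq\mathbb{Z}/p\mathbb{Z}$ for $j$ even and $0$ for $j$ odd, which is precisely what you do (you merely spell out the Yoneda and co-representability steps that the paper leaves implicit). Your remark on the parity shift explaining the indexing $THH_{n+2r}$ versus $THH_{n+2r-1}$ is accurate.
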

\begin{proof}
The proof is the same as the one of theorem~\ref{natural}, but over $\mathbb{Z}/p\mathbb{Z}$ we have the following calculation (see \cite[0.2.3]{Dundas})
 \[
THH_j(\mathbb{Z}/p\mathbb{Z})=\left\{ \begin{array}{lcl}
\mathbb{Z}/p\mathbb{Z} & \text{if} & j\,\,is\,\, even \\
0 & \text{if} & j\,\,is\,\,odd\,.

\end{array} \right.
\]

\end{proof}
\begin{remark}
When $X$ is a quasi-compact and quasi-separated scheme, the $THH$ of $X$ is equivalent \cite[1.3]{Mandell} to the topological Hochschild homology of schemes as defined by Geisser and Hesselholt in \cite{GH}. In particular, theorems~\ref{natural} and \ref{natural1} allows us to use all the calculations of $THH$ developed in \cite{GH}, to infer results on algebraic $K$-theory.
\end{remark}

\section{Topological equivalence theory}\label{chapter9}
In this chapter we recall Dugger-Shipley's notion of topological equivalence and show that, when $R$ is the field of rationals numbers $\mathbb{Q}$, two dg categories are topological equivalent if and only if they are quasi-equivalent (see proposition~\ref{proprational}).

Let $i: \mathbb{S} \rightarrow H\mathbb{Z}$ be the unique morphism of commutative symmetric ring spectra. By proposition~\ref{Top}, we have the restriction/extension of scalars Quillen adjunction
$$
\xymatrix{
H\mathbb{Z}\text{-}\mbox{Cat} \ar@<1ex>[d]^{i^{\ast}} \\
\mathsf{Sp}^{\Sigma}\text{-}\mbox{Cat} \ar@<1ex>[u]^{i_!}\,.
}
$$
\begin{definition}
Let $\ca$ and $\cb$ be two dg categories. We say that $\ca$ and $\cb$ are {\em topological equivalent} if the spectral categories $i^{\ast}(\mathbb{H}(\ca))$ and $i^{\ast}(\mathbb{H}(\cb))$ are isomorphic in $\mathsf{Ho}(\mathsf{Sp}^{\Sigma}\text{-}\mbox{Cat})$.
\end{definition}
\begin{remark}
If in the previous definition, $\ca$ and $\cb$ have only one object, we recover Dugger-Shipley's notion of topological equivalence for DG algebras, see \cite{DS}.
\end{remark}
\begin{remark}\label{eq->top}
The zig-zag of Quillen equivalences described in section~\ref{global} and definition~\ref{stableR} imply that:
if $\ca$ and $\cb$ are isomorphic in $\mathsf{Ho}(\dgcat)$, then they are topological equivalent.
\end{remark}
Now, suppose we work over the rationals $\mathbb{Q}$. As before, we have the restriction/extension of scalars Quillen adjunction
$$
\xymatrix{
H\mathbb{Q}\text{-}\mbox{Cat} \ar@<1ex>[d]^{i^{\ast}} \\
\mathsf{Sp}^{\Sigma}\text{-}\mbox{Cat} \ar@<1ex>[u]^{i_!}\,.
}
$$
\begin{notation}
Let $\ca$ be a spectral category. We say that $\ca$ satisfies condition $R_H)$ if the following condition is verified:
\begin{itemize}
\item[$R_H)$] for all objects $x,y \in \ca$, the symmetric spectrum $\ca(x,y)$ has rational homotopy.
\end{itemize}
\end{notation}
\begin{lemma}\label{rational}
Let $F:\ca \rightarrow \cb$ be a stable quasi-equivalence between spectral categories which satisfy condition $R_H)$. Then $i_!(F):i_!(\ca) \rightarrow i_!(\cb)$ is a stable quasi-equivalence (\ref{stableR}).
\end{lemma}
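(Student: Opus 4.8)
The plan is to detect the stable quasi-equivalence on the underlying spectral categories and then, via a naturality square, to reduce everything to a single computation on Hom-spectra that isolates the role of the rationality hypothesis $R_H)$.

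First I would apply proposition~\ref{coincide}: since $i_!(F)$ is an $H\mathbb{Q}$-functor, it is a stable quasi-equivalence (\ref{stableR}) if and only if the restricted spectral functor $i^{\ast}i_!(F)$ is a stable quasi-equivalence in $\mathsf{Sp}^{\Sigma}\text{-}\mbox{Cat}$. Thus it suffices to work entirely inside $\mathsf{Sp}^{\Sigma}\text{-}\mbox{Cat}$ with the composite $i^{\ast}i_!$. I would then exploit the unit $\eta:\mbox{Id} \Rightarrow i^{\ast}i_!$ of the adjunction $(i_!,i^{\ast})$, whose naturality yields a commutative square of spectral functors
$$
\xymatrix{
\ca \ar[r]^-F \ar[d]_-{\eta_{\ca}} & \cb \ar[d]^-{\eta_{\cb}} \\
i^{\ast}i_!(\ca) \ar[r]_-{i^{\ast}i_!(F)} & i^{\ast}i_!(\cb)\,.
}
$$
Here $\eta_{\ca}$ and $\eta_{\cb}$ induce the identity on objects, and on each Hom-spectrum they are the rationalization map $X \to X \wedge H\mathbb{Q}$. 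Since the class $\cw_S$ of stable quasi-equivalences in $\mathsf{Sp}^{\Sigma}\text{-}\mbox{Cat}$ is the class of weak equivalences of a Quillen model structure and hence satisfies the two out of three axiom, once I show that $\eta_{\ca}$ and $\eta_{\cb}$ lie in $\cw_S$ the hypothesis $F \in \cw_S$ forces $i^{\ast}i_!(F) \in \cw_S$, which finishes the argument.

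It therefore remains to prove that $\eta_{\ca}$ (and symmetrically $\eta_{\cb}$) is a stable quasi-equivalence. Condition $WE2')$ is automatic because $\eta_{\ca}$ induces the identity on the set of objects, so I only need condition $WE1)$, namely that for all objects $x,y$ the map of symmetric spectra
$$ \eta_{\ca}(x,y):\ca(x,y) \longrightarrow \ca(x,y)\wedge H\mathbb{Q} $$
is a stable equivalence. This is where $R_H)$ enters: writing $X=\ca(x,y)$, the flatness of $\mathbb{Q}$ over $\mathbb{Z}$ gives a natural identification $\pi_{\ast}(X\wedge H\mathbb{Q}) \simeq \pi_{\ast}(X)\otimes_{\mathbb{Z}}\mathbb{Q}$, under which $\eta_{\ca}(x,y)$ realizes the canonical map $a \mapsto a\otimes 1$. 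Since $X$ has rational homotopy, each $\pi_{\ast}(X)$ is already a $\mathbb{Q}$-vector space and this map is an isomorphism, so $\eta_{\ca}(x,y)$ is a stable equivalence, as required.

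The main obstacle I anticipate is precisely this homotopy computation for the point-set functor $i_!$: the identification $\pi_{\ast}(X\wedge H\mathbb{Q})\simeq \pi_{\ast}(X)\otimes_{\mathbb{Z}}\mathbb{Q}$ is automatic for the derived smash, but one must check that it remains valid for the point-set smash occurring in $i_!$ applied to the (not necessarily cofibrant) Hom-spectra $\ca(x,y)$. This is ensured by the fact that smashing with $H\mathbb{Q}$ is homotopically well behaved, and the rationality hypothesis $R_H)$ is exactly what promotes the resulting rationalization map from a mere $\mathbb{Q}$-localization to an honest isomorphism on homotopy groups; everything else in the proof is formal, resting on proposition~\ref{coincide} and the two out of three property of $\cw_S$.
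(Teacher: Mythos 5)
Your proposal is correct and follows essentially the same route as the paper: reduce via proposition~\ref{coincide} to the spectral level, use the naturality square for the unit of $(i_!,i^{\ast})$, show the unit maps are stable quasi-equivalences because $X \to X\wedge H\mathbb{Q}$ is a stable equivalence on spectra with rational homotopy (the paper cites the proof of \cite[1.7]{DS} for exactly this point), and conclude by two out of three. The only cosmetic difference is that the paper also verifies condition $WE1)$ for $i_!(F)$ directly, which your two-out-of-three argument renders unnecessary.
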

\begin{proof}
Notice that the proof of proposition~\cite[1.7]{DS} implies that, for all objects $x,y \in \ca$, the morphism
$$ i_!(\ca)(x,y)=\ca(x,y)\underset{\mathbb{S}}{\wedge}H\mathbb{Q} \longrightarrow \cb(Fx,Fy)\underset{\mathbb{S}}{\wedge}H\mathbb{Q} = i_!(\cb)(Fx,Fy)$$
is a projective stable equivalence in $H\mathbb{Q}\text{-}\mbox{Mod}$ (see~\ref{stable}).

Now, consider the commutative square
$$
\xymatrix{
\ca \ar[d]_F \ar[r]^-{\epsilon_{\ca}} & i^{\ast}i_!(\ca) \ar[d]^{i^{\ast}i_!(F)} \\
\cb \ar[r]_-{\epsilon_{\cb}} & i^{\ast}i_!(\cb)\,,
}
$$
where $\epsilon_{\ca}$ and $\epsilon_{\cb}$ are the spectral functors induced by the adjunction $(i_!, i^{\ast})$. Since $\ca$ and $\cb$ satisfy condition $R_H)$, the proof of proposition~\cite[1.7]{DS} implies that for all objects $x,y \in \ca$, the morphism
$$ \ca(x,y) \longrightarrow \ca(x,y) \underset{\mathbb{S}}{\wedge} H\mathbb{Q} = i^{\ast}i_!(\ca)(x,y)$$
is a projective stable equivalence in $H\mathbb{Q}\text{-}\mbox{Mod}$. Since $\epsilon_{\ca}$ and $\epsilon_{\cb}$ induce the identity map on objects, we conclude that $\epsilon_{\ca}$ and $\epsilon_{\cb}$ are stable quasi-equivalences and so by proposition~\ref{coincide}, that $i_!(P)$ is a stable quasi-equivalence.
\end{proof}
\begin{proposition}\label{proprational}
Let $\ca$ and $\cb$ be two dg categories over $\mathbb{Q}$. Then $\ca$ and $\cb$ are topological equivalent if and only if they are isomorphic in $\mathsf{Ho}(\dgcat_{\mathbb{Q}})$.
\end{proposition}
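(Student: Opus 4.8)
The plan is to prove both implications, the backward one being immediate from the machinery already assembled. If $\ca$ and $\cb$ are isomorphic in $\mathsf{Ho}(\dgcat_{\mathbb{Q}})$, then Remark~\ref{eq->top} (the zig-zag of Quillen equivalences of Section~\ref{global}, now read over $\mathbb{Q}$) shows at once that they are topological equivalent. For the converse I would exploit that over $\mathbb{Q}$ the derived functor $\mathbb{H}:\mathsf{Ho}(\dgcat_{\mathbb{Q}}) \stackrel{\sim}{\rightarrow} \mathsf{Ho}(H\mathbb{Q}\text{-}\mbox{Cat})$ is an \emph{equivalence} of categories, so that it suffices to reconstruct $\mathbb{H}(\ca)$, up to isomorphism in $\mathsf{Ho}(H\mathbb{Q}\text{-}\mbox{Cat})$, from the underlying spectral category $i^{\ast}(\mathbb{H}(\ca))$; an isomorphism $\mathbb{H}(\ca)\cong\mathbb{H}(\cb)$ then transports back to $\ca\cong\cb$ through the inverse equivalence.

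First I would record that $i^{\ast}(\mathbb{H}(\ca))$ always satisfies condition $R_H)$: its morphism objects are the underlying symmetric spectra of $H\mathbb{Q}$-modules, and every $H\mathbb{Q}$-module has rational homotopy. Next I would verify that condition $R_H)$ is invariant under stable quasi-equivalences of spectral categories. Indeed, $WE1)$ transports rational homotopy along the maps $F(x,y)$, while $WE2)$ guarantees that every object of the target is isomorphic, in the category $[\,-\,]$, to one in the image, and isomorphic objects have stably equivalent morphism spectra; hence source and target satisfy $R_H)$ simultaneously. Now a topological equivalence is precisely an isomorphism $i^{\ast}(\mathbb{H}(\ca)) \cong i^{\ast}(\mathbb{H}(\cb))$ in $\mathsf{Ho}(\mathsf{Sp}^{\Sigma}\text{-}\mbox{Cat})$, hence a finite zig-zag of stable quasi-equivalences, and by the invariance just noted every spectral category appearing in this zig-zag satisfies $R_H)$.

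This puts me exactly in the setting of Lemma~\ref{rational}: applying $i_!$ termwise turns the zig-zag into a zig-zag of stable quasi-equivalences of $H\mathbb{Q}$-categories, so $i_!i^{\ast}(\mathbb{H}(\ca)) \cong i_!i^{\ast}(\mathbb{H}(\cb))$ in $\mathsf{Ho}(H\mathbb{Q}\text{-}\mbox{Cat})$. It then remains to identify $i_!i^{\ast}(\mathbb{H}(\ca))$ with $\mathbb{H}(\ca)$. For this I would use the counit $i_!i^{\ast}(\mathbb{H}(\ca)) \rightarrow \mathbb{H}(\ca)$: on each morphism object it is the action map $M\wedge_{\mathbb{S}}H\mathbb{Q} \rightarrow M$ of an $H\mathbb{Q}$-module $M$, which is a stable equivalence by the rational argument used in the proof of Lemma~\ref{rational} (equivalently \cite[1.7]{DS}), and it is the identity on objects; hence it is a stable quasi-equivalence. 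The same holds for $\cb$, giving $\mathbb{H}(\ca)\cong\mathbb{H}(\cb)$ and therefore $\ca\cong\cb$ in $\mathsf{Ho}(\dgcat_{\mathbb{Q}})$. The main obstacle is this passage through Lemma~\ref{rational}: since $i_!$ is merely a left Quillen functor it is a priori homotopically meaningful only on cofibrant objects, and it is precisely the invariance of $R_H)$ along the entire zig-zag that licenses applying the underived $i_!$ termwise; establishing that invariance, in particular its $WE2)$ half dealing with objects outside the image, is the step that needs the most care.
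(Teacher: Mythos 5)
Your proof follows essentially the same route as the paper's: the backward implication via Remark~\ref{eq->top}, and for the forward one the zig-zag of stable quasi-equivalences coming from the isomorphism in $\mathsf{Ho}(\mathsf{Sp}^{\Sigma}\text{-}\mbox{Cat})$, Lemma~\ref{rational} applied termwise after $i_!$, and the counit $i_!i^{\ast}(\mathbb{H}(\ca)) \rightarrow \mathbb{H}(\ca)$ shown to be a stable quasi-equivalence on morphism objects by the rational argument of \cite[1.7]{DS} and on objects because it is the identity. Your explicit verification that condition $R_H)$ propagates along the entire zig-zag (so that Lemma~\ref{rational} really applies to every intermediate $\cz_i$, not just to the endpoints) is a hypothesis the paper leaves implicit, and making it explicit is a genuine improvement rather than a deviation.
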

\begin{proof}
By remark~\ref{eq->top}, it is enough to prove one of the implications. Suppose that $\ca$ and $\cb$ are topological equivalent. Then there exists a zig-zag of stable quasi-equivalences in $\mathsf{Sp}^{\Sigma}\text{-}\mbox{Cat}$
$$ i^{\ast}(\ca) \stackrel{\sim}{\leftarrow} \cz_1 \stackrel{\sim}{\rightarrow} \ldots \stackrel{\sim}{\rightarrow} \cz_n \stackrel{\sim}{\leftarrow} i^{\ast}(\cb) $$
relating $i^{\ast}(\ca)$ and $i^{\ast}(\cb)$. By applying the functor $i_!$, we obtain by lemma~\ref{rational}, the following zig-zag of stable quasi-equivalences
$$ i_! i^{\ast}(\ca) \stackrel{\sim}{\leftarrow} i_!(\cz_1) \stackrel{\sim}{\rightarrow} \ldots \stackrel{\sim}{\rightarrow} i_!(\cz_n) \stackrel{\sim}{\leftarrow} i_! i^{\ast}(\cb)\,.$$
We now show that the $H\mathbb{Q}$-functors
$$
\begin{array}{rcl}
\eta_{\ca}: i_! i^{\ast}(\ca) \rightarrow \ca & &  \eta_{\cb}: i_!i^{\ast}(\cb) \rightarrow \cb\,,
\end{array}
$$
induced by the above adjunction $(i_!, i^{\ast})$ are in fact stable quasi-equivalences (\ref{stableR}). Notice that the proof of proposition~\cite[1.7]{DS} implies that, for all objects $x,y \in \ca$, the morphism
$$i_!i^{\ast} (\ca)(x,y) \longrightarrow \ca(x,y)$$
is a projective stable equivalence in $H\mathbb{Q}\text{-}\mbox{Mod}$. Since $\eta_{\ca}$ induces the identity map on objects, it satisfies condition $WE2')$ and so it is a stable quasi-equivalence. An analogous result holds for $\cb$ and so the proof is finished. 
\end{proof}

\appendix

\section{Adjunctions}
Let $(\cc, -\otimes-, {\bf 1}_{\cc})$ and $(\cd, -\wedge-, {\bf 1}_{\cd})$ be two symmetric monoidal categories and 
$$
\xymatrix{
\cc \ar@<1ex>[d]^N \\
\cd \ar@<1ex>[u]^L
}
$$
an adjunction, with $N$ a lax monoidal functor (\ref{lax}).
 
\begin{notation}
If $I$ is a set, we denote by $\cc^I$-$\mbox{Gr}$, resp. by $\cc^I$-$\mbox{Cat}$, the category of $\cc$-graphs with a fixed set of objects $I$, resp. the category of categories enriched over $\cc$ which have a fixed set of objects $I$. The morphisms in $\cc^I$-$\mbox{Gr}$ and $\cc^I$-$\mbox{Cat}$ induce the identity map on the objects.
\end{notation}
We have a natural adjunction
$$
\xymatrix{
\cc^I\text{-}\mbox{Cat} \ar@<1ex>[d]^U \\
\cc^I \text{-}\mbox{Gr} \ar@<1ex>[u]^{T_I},
}
$$
where  $U$ is the forgetful functor and $T_I$ is defined as
\[ T_I(\ca)(x,y):= \left\{ \begin{array}{ll}
{\bf 1}_{\cc} \amalg \underset{x, x_1, \ldots, x_n, y}{\coprod} \ca(x,x_1)  \otimes \ldots \otimes \ca(x_n,y) & \mbox{if $x=y$} \\
\underset{x, x_1, \ldots, x_n,y}{\coprod} \ca(x,x_1) \otimes \ldots \otimes \ca(x_n,y) & \mbox{if $x\neq y$} \end{array} \right. \]
Composition is given by concatenation and the unit corresponds to ${\bf 1}_{\cc}$.
\begin{remark}\label{nova1}
If the category $\cc$ carries a cofibrantly generated Quillen model structure, the categories $\cc^I$-$\mbox{Gr}$ and $\cc^I\text{-}\mbox{Cat}$ admit standard model structures. The weak equivalences (resp. fibrations) are the morphisms $F:\ca \rightarrow \cb$ such that
$$ F(x,y):\ca(x,y) \longrightarrow \cb(x,y),\,\,\,\, x,y \in I$$
is a weak equivalence (resp. fibration) in $\cc$. In fact, the Quillen model structure on $\cc$ naturally induces a model structure on $\cc^I\text{-}\mbox{Gr}$, which can be lifted along the functor $T_I$ using theorem~\cite[11.3.2]{Hirschhorn}.
\end{remark}
Clearly the adjunction $(L,N)$, induces the following adjunction
$$
\xymatrix{
\cc^I\text{-}\mbox{Gr} \ar@<1ex>[d]^N \\
\cd^I\text{-}\mbox{Gr} \ar@<1ex>[u]^L
}
$$
still denoted by $(L,N)$.

Since the functor $N: \cc \rightarrow \cd$ is lax monoidal it induces, as in \cite[3.3]{SS}, a functor
$$
\xymatrix{
\cc^I\text{-}\mbox{Cat} \ar@<1ex>[d]^{N_I} \\
\cd^I\text{-}\mbox{Cat}\,.
}
$$
More precisely, let $\cb \in \cc^I$-$\mbox{Cat}$ and $x, y$ and $z$ objects of $\cb$. Then $N_I(\cb)$ has the same objects as $\cb$, the spaces of morphisms are given by
$$ N_I(\cb)(x,y):=N\cb(x,y),\,\,\,\, x, y \in \cb$$
and the composition is defined by
$$ N\cb(x,y) \wedge N\cb(y,z) \longrightarrow N(\cb(x,y)\otimes \cb(y,z)) \stackrel{N(c)}{\longrightarrow} N\cb(x,z)\,,$$
where $c$ denotes the composition operation in $\cb$. As it is shown in section $3.3$ of \cite{SS} the functor $N_I$ admits a left adjoint $L_I$:

Let $\ca \in  \cd^I$-$\mbox{Cat}$. The value of the left adjoint $L_I$ on $\ca$ is defined as the coequalizer of two morphisms in $\cc^I$-$\mbox{Cat}$
$$ \xymatrix{ T_I L U T_IU(\ca) \ar@<1ex>[r]^{\psi_1} \ar@<-1ex>[r]_{\psi_2}  & T_I L U(\ca) \ar[r] & L_I(\ca).}$$
The morphism $\psi_1$ is obtained from the unit of the adjunction
$$ T_I U \ca \longrightarrow \ca\,,$$
by applying the composite functor $T_I L U$; the morphism $\psi_2$ is the unique morphism in $\cc^I$-$\mbox{Cat}$ induced by the $\cc^I$-$\mbox{Gr}$ morphism
$$ L U T_I U(\ca) \longrightarrow U T_I L U(\ca)$$
whose value at $L U T_I U(\ca)(x,y)\,,\,\, x, y \in I$ is given by
$$ 
\xymatrix{
\underset{x,x_1, \ldots, x_n,y}{\coprod} L(\ca(x,x_1)\otimes \ldots \otimes \ca(x_n,y)) \ar[d]^-{\phi} \\ \underset{x,x_1, \ldots, x_n,y}{\coprod} L \ca(x,x_1) \otimes \ldots \otimes L \ca(x_n,y),
}
$$
where $\phi$ is the lax {\em co}monoidal structure on $L$, induced by the lax monoidal structure on $N$, see section $3.3$ of \cite{SS}.
\begin{remark}\label{nova2}
Notice that if $\cc$ and $\cd$ are Quillen model categories and the adjunction $(L,N)$ is a weak monoidal Quillen equivalence (see \ref{smwe}), proposition~\cite[6.4]{SS} implies that we obtain a Quillen equivalence
$$
\xymatrix{
\cc^I\text{-}\mbox{Cat} \ar@<1ex>[d]^{N_I} \\
\cd^I\text{-}\mbox{Cat}  \ar@<1ex>[u]^{L_I}
}
$$
with respect to the model structure of remark~\ref{nova1}. Moreover, if $L$ is strong monoidal (\ref{lax}), the left adjoint $L_I$ is given by the original functor $L$.
\end{remark}
{\bf Left adjoint}
Notice that the functor $N_I: \cc^I$-$\mbox{Cat} \rightarrow \cd^I$-$\mbox{Cat}$, of the previous section, can be naturally defined for every set $I$ and so it induces a `global' functor
$$
\xymatrix{
\cc\text{-}\mbox{Cat} \ar@<1ex>[d]^N \\
\cd\text{-}\mbox{Cat} .
}
$$
In this section we will construct the left adjoint of $N$:

Let $\ca \in \cd\text{-}\mbox{Cat}$ and denote by $I$ its set of objects. Define $L_{cat}(\ca)$ as the $\cc$-category $L_I(\ca)$. 

Now, let $F:\ca \rightarrow \ca'$ be a $\cd$-functor. We denote by $I'$ the set of objects of $\ca'$. The $\cd$-functor $F$ induces the following diagram in $\cc\text{-}\mbox{Cat}$:
$$
\xymatrix{
T_I L U T_I U(\ca) \ar@<1ex>[r]^{\psi_1} \ar@<-1ex>[r]_{\psi_2} \ar[d] & T_I L U(\ca) \ar[d] \ar[r] & L_I(\ca)=:L_{cat}(\ca)  \\
T_{I'} L U T_{I'} U(\ca') \ar@<1ex>[r]^{\psi_1} \ar@<-1ex>[r]_{\psi_2} & T_{I'} L U(\ca') \ar[r] & L_{I'}(\ca')=:L_{cat}(\ca')\,.
}
$$
Notice that the square whose horizontal arrows are $\psi_1$ (resp. $\psi_2$) is commutative. Since the inclusions
$$ \cc^I\text{-}\mbox{Cat} \hookrightarrow \cc\text{-}\mbox{Cat} \,\,\,\,\text{and} \,\,\,\, \cc^{I'}\text{-}\mbox{Cat} \hookrightarrow \cc\text{-}\mbox{Cat}$$
clearly preserve coequalizers the previous diagram in $\cc\text{-}\mbox{Cat}$ induces a $\cc$-functor
$$ L_{cat}(F) : L_{cat}(\ca) \longrightarrow L_{cat}(\ca').$$
We have constructed a functor
$$ L_{cat}: \cd\text{-}\mbox{Cat} \longrightarrow \cc\text{-}\mbox{Cat}.$$

\begin{proposition}{\cite[5.5]{DGScat}}\label{Adjunction}
The functor $L_{cat}$ is left adjoint to $N$. 
\end{proposition}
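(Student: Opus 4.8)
The plan is to establish the adjunction by exhibiting, for every $\ca \in \cd\text{-}\mbox{Cat}$ with set of objects $I$ and every $\cb \in \ccat$ with set of objects $J$, a natural bijection
$$ \mbox{Hom}_{\ccat}(L_{cat}(\ca), \cb) \; \cong \; \mbox{Hom}_{\cd\text{-}\mbox{Cat}}(\ca, N(\cb))\,, $$
and to produce it by decomposing both Hom-sets according to the underlying map on objects, thereby reducing everything to the fixed-object-set adjunction $(L_I, N_I)$ recalled from section $3.3$ of \cite{SS}.

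First I would record the bookkeeping for varying object sets. For a map of sets $f:I \to J$ and a $\cc$-category $\cb$ with object set $J$, let $f^{\ast}\cb$ be the $\cc$-category with object set $I$ and $(f^{\ast}\cb)(x,y):=\cb(f(x),f(y))$, i.e. the evident restriction along $f$. Since $L_{cat}(\ca)=L_I(\ca)$ has the same object set $I$ as $\ca$, any $\cc$-functor $L_{cat}(\ca) \to \cb$ inducing a fixed $f$ on objects factors uniquely through the canonical map $f^{\ast}\cb \to \cb$ via an object-preserving functor in $\cc^I\text{-}\mbox{Cat}$, and likewise on the $\cd$-side. Hence
$$ \mbox{Hom}_{\ccat}(L_{cat}(\ca), \cb) \; = \coprod_{f:I \to J} \mbox{Hom}_{\cc^I\text{-}\mbox{Cat}}(L_I(\ca),\, f^{\ast}\cb)\,, $$
and similarly
$$ \mbox{Hom}_{\cd\text{-}\mbox{Cat}}(\ca, N(\cb)) \; = \coprod_{f:I \to J} \mbox{Hom}_{\cd^I\text{-}\mbox{Cat}}(\ca,\, f^{\ast}(N\cb))\,. $$

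The crucial compatibility is that $N$ commutes with restriction: because $N$ acts Hom-object-wise while $f^{\ast}$ merely relabels indices, one has $f^{\ast}(N\cb)=N_I(f^{\ast}\cb)$ as $\cd$-categories with object set $I$. Feeding this identity into the second coproduct, applying the fixed-object-set isomorphism $\mbox{Hom}_{\cc^I\text{-}\mbox{Cat}}(L_I(\ca), f^{\ast}\cb) \cong \mbox{Hom}_{\cd^I\text{-}\mbox{Cat}}(\ca, N_I(f^{\ast}\cb))$ term by term, and then summing over $f$, yields the desired bijection.

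It remains to check naturality in $\ca$ and in $\cb$, and this is the step I expect to demand the most care, since a general $\cd$-functor $\ca' \to \ca$ (resp. $\cc$-functor $\cb \to \cb'$) need not preserve object sets: it reindexes the coproducts by composing the object maps. One must verify that the coequalizer presentation defining $L_{cat}$ on morphisms (displayed in the construction preceding the statement) intertwines these reindexings with the term-wise adjunction isomorphisms. Because the inclusions $\cc^I\text{-}\mbox{Cat} \hookrightarrow \ccat$ preserve the relevant coequalizers, this reduces to the naturality of each $(L_I,N_I)$ together with the functoriality of $f \mapsto f^{\ast}$, which assemble to the global naturality and complete the proof.
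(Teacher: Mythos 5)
Your argument is correct: decomposing both Hom-sets over the underlying map on objects $f:I\to J$, identifying $f^{\ast}(N\cb)$ with $N_I(f^{\ast}\cb)$, and applying the fixed-object-set adjunction $(L_I,N_I)$ of \cite[3.3]{SS} termwise is exactly the reduction that the appendix's construction of $L_{cat}$ (via the coequalizer presentation of $L_I$) is set up for, and you rightly flag naturality in $\ca$ across object-set-changing functors as the only step needing real care. The paper itself gives no proof, deferring to \cite[5.5]{DGScat}, so there is nothing to contrast with; your proposal is the standard argument and fills that gap correctly.
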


\begin{remark}{\cite[3.3]{SS}}\label{corleftadj}
Notice that if, in the initial adjunction $(L,N)$, the left adjoint $L$ is strong monoidal (see~\ref{lax}), remark~\ref{nova2} implies that the left adjoint functor 
$$L_{cat}:\cd\text{-}\mbox{Cat} \longrightarrow \cc\text{-}\mbox{Cat}$$
is given by the original left adjoint $L$.
\end{remark}
\section{Bousfield localization techniques}
In this appendix we recall and generalize some results concerning the construction and localization of Quillen model structures. We start by stating a weaker form of the Bousfield localization theorem~\cite[X-4.1]{Jardine}.
\begin{definition}
Let $\cm$ be a Quillen model category, $Q:\cm \rightarrow \cm$ a functor and $\eta:\mbox{Id} \rightarrow Q$ a natural transformation between the identity functor and $Q$. A morphism $f:A \rightarrow B$ in $\cm$ is:
\begin{itemize}
\item[-] a {\em $Q$-weak equivalence} if $Q(f)$ is a weak equivalence in $\cm$.
\item[-] a {\em cofibration} if it is a cofibration in $\cm$.
\item[-] a {\em $Q$-fibration} if it has the R.L.P. with respect to all cofibrations which are $Q$-weak equivalences.
\end{itemize}
\end{definition}
An immediate analysis of the proof of theorem allows us to state the following general theorem~\ref{modif}. Notice that in the entire proof, we only use the right properness of $\cm$ and in the proof of lemma \cite[X-4.6]{Jardine} and theorem~\cite[X-4.8]{Jardine}, we only use condition (A3).
\begin{theorem}{\cite[X-4.1]{Jardine}}\label{modif}
Let $\cm$ be a right proper Quillen model structure, $Q:\cm \rightarrow \cm$ a functor and $\eta:\mbox{Id} \rightarrow Q$ a natural transformation such that the following three conditions hold:
\begin{itemize}
\item[(A1)] The functor $Q$ preserves weak equivalences.
\item[(A2)] The maps $\eta_{Q(A)}, Q(\eta_{A}):Q(A) \rightarrow QQ(A)$ are weak equivalences in $\cm$.
\item[(A3)] Given a diagram
$$
\xymatrix{
& B \ar[d]^p \\
A \ar[r]_{\eta_A} & Q(A)
}
$$
with $p$ a $Q$-fibration, the induced map $\eta_{{\ca}_{\ast}}:A \underset{Q(A)}{\times} B\rightarrow B$ is a $Q$-weak equivalence.
\end{itemize}
Then there is a right proper Quillen model structure on $\cm$ for which the weak equivalences are the $Q$-weak equivalences, the cofibrations those of $\cm$ and the fibrations the $Q$-fibrations.
\end{theorem}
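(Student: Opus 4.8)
The plan is to follow the proof of the Bousfield--Friedlander localization theorem as presented in \cite[X-4.1]{Jardine}, checking at each stage that the only inputs consumed are the right properness of $\cm$ together with conditions (A1), (A2) and (A3). First I would fix the three candidate classes—the $Q$-weak equivalences $\cw_Q$, the cofibrations of $\cm$, and the $Q$-fibrations—and dispose of the formal axioms. The class $\cw_Q$ is closed under retracts and satisfies the two out of three property: both follow at once from (A1), since $Q$ preserves weak equivalences and the weak equivalences of $\cm$ already enjoy these closure properties. The lifting of cofibrations against trivial $Q$-fibrations is built into the definition of $Q$-fibration. Moreover, because the cofibrations are left unchanged, the maps having the right lifting property with respect to all cofibrations are exactly the trivial fibrations of $\cm$; these are $Q$-weak equivalences by (A1), so one of the two $\cm$-factorizations already serves as a factorization into a cofibration followed by a trivial $Q$-fibration, once this identification of acyclic fibrations is verified.

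The technical heart of the argument is the characterization of $Q$-fibrations. I would prove that a map $p\colon B \to A$ is a $Q$-fibration if and only if $p$ is a fibration in $\cm$ and the naturality square
$$
\xymatrix{
B \ar[r]^{\eta_B} \ar[d]_p & Q(B) \ar[d]^{Q(p)} \\
A \ar[r]_{\eta_A} & Q(A)
}
$$
is homotopy cartesian in $\cm$. Since the trivial cofibrations of $\cm$ are in particular cofibrations lying in $\cw_Q$, any $Q$-fibration is automatically a fibration of $\cm$; the homotopy-cartesian condition is then extracted from (A3), which feeds the pullback of $\eta_A$ along $p$ back into $\cw_Q$, precisely at the point where the classical proof invokes properness. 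Conversely, a fibration whose $\eta$-square is homotopy cartesian lifts against cofibrations in $\cw_Q$ by a diagram chase using the homotopy idempotence of $Q$ supplied by (A2). This step mirrors \cite[X-4.6]{Jardine} and \cite[X-4.8]{Jardine}, and is where (A3) is genuinely needed.

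With this characterization in hand, the remaining factorization of an arbitrary $f\colon X \to Y$ into a $Q$-trivial cofibration followed by a $Q$-fibration is obtained by first factoring $f$ in $\cm$ and then correcting the result through the coaugmentation $\eta$, so as to arrange a homotopy-cartesian $\eta$-square; condition (A2) guarantees that this correction does not disturb the $Q$-weak-equivalence class, and (A1) controls the intermediate object. The outstanding lifting axiom and the matching of the two descriptions of the acyclic fibrations then follow by the usual retract argument. Finally, right properness of the new structure is a formal consequence of the homotopy-cartesian characterization of $Q$-fibrations, reducing the pullback stability of $\cw_Q$ along $Q$-fibrations to right properness of $\cm$ via (A3). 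I expect the main obstacle to be exactly the verification of the $Q$-fibration characterization: one must confirm that the pullback and properness manipulations of the Goerss--Jardine argument are driven entirely by condition (A3) and the coaugmentation $\eta$, rather than by any concrete homotopy-idempotent functor, so that the abstract hypotheses (A1)--(A3) suffice in place of the original geometric input.
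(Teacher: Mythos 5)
Your proposal is correct and follows essentially the same route as the paper: the paper does not reprove the theorem but simply observes that an inspection of the Bousfield--Friedlander argument in \cite[X-4.1]{Jardine} (together with \cite[X-4.6]{Jardine} and \cite[X-4.8]{Jardine}, where condition (A3) enters) shows that only right properness of $\cm$ and conditions (A1)--(A3) are consumed, which is exactly the audit you carry out. Your outline of the key steps --- the characterization of $Q$-fibrations via homotopy cartesian $\eta$-squares and the localization of the role of (A3) there --- matches the paper's stated justification.
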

\begin{theorem}{\cite[X-4.8]{Jardine}}\label{Qfib}
Suppose that the right proper Quillen model category $\cm$ and the functor $Q$ together satisfy the conditions for theorem~\ref{modif}. Then a map $f:A \rightarrow B$ is a $Q$-fibration if and only if it is a fibration in $\cm$ and the square
$$
\xymatrix{
A \ar[d]_f \ar[r]^{\eta_A} & Q(A) \ar[d]^{Q(f)} \\
B \ar[r]_{\eta_B} & Q(B)
}
$$
is homotopy cartesian in $\cm$.
\end{theorem}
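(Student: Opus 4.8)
This is the Bousfield--Friedlander characterization of the fibrations of the localized model structure produced by Theorem~\ref{modif}, and I would organize the proof of both implications around one \emph{special-case lemma}: \textbf{if} $p:X\to Y$ is a fibration in $\cm$, the functor $Q$ carries $p$ to a fibration, and the naturality square
$$
\xymatrix{
X \ar[d]_p \ar[r]^-{\eta_X} & Q(X) \ar[d]^{Q(p)} \\
Y \ar[r]_-{\eta_Y} & Q(Y)
}
$$
is a \emph{strict} pullback, \textbf{then} $p$ is a $Q$-fibration. This lemma is where condition (A3) does the real work. To solve a lifting problem for $p$ against a cofibration $j:U\to V$ that is a $Q$-weak equivalence, I would use the strict pullback $X\cong Y\times_{Q(Y)}Q(X)$ to reduce the construction of a lift $V\to X$ to that of a compatible map $V\to Q(X)$ lying over $V\to Y\to Q(Y)$ and extending $U\to X\to Q(X)$; since $Q(j)$ is a weak equivalence of $\cm$ and, by (A2), the objects $Q(X)$ and $Q(Y)$ are local, feeding the pullback of $p$ along $\eta_X$ into condition (A3) yields the required factorization. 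I expect this lemma to be the \textbf{main obstacle}: all the remaining steps are formal manipulations with factorizations and right properness, whereas extracting the lift from (A3) is the one genuinely technical point.

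\textbf{The converse implication} then goes as follows. Suppose $f:A\to B$ is a fibration in $\cm$ whose naturality square is homotopy cartesian. Factor $Q(f)$ in $\cm$ as a trivial cofibration $Q(A)\stackrel{\sim}{\rightarrow}W$ followed by a fibration $W\to Q(B)$, form the pullback $P:=B\times_{Q(B)}W$, and note that $P\to B$ is a fibration to which the special-case lemma applies, so it is a $Q$-fibration. By the meaning of homotopy cartesian, the canonical comparison $c:A\to P$ over $B$ is a weak equivalence of $\cm$. Factoring $c$ in the slice $\cm/B$ as a trivial cofibration followed by a trivial fibration, and using that both $A\to B$ and $P\to B$ are fibrations (so both objects are fibrant in $\cm/B$), a retraction argument exhibits $f$ as a retract of the $Q$-fibration $P\to B$ in the category of arrows over $B$; since $Q$-fibrations are defined by a right lifting property they are closed under retracts, and therefore $f$ is a $Q$-fibration.

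\textbf{The forward implication} splits into two parts. That a $Q$-fibration $f$ is a fibration in $\cm$ is immediate: by (A1) the functor $Q$ preserves weak equivalences, so every trivial cofibration of $\cm$ is a cofibration which is a $Q$-weak equivalence, and $f$ has the right lifting property against all of these. To see that the square for $f$ is homotopy cartesian, I would again form $P=B\times_{Q(B)}W$ and the comparison $c:A\to P$; here $\eta_A$ and $\eta_B$ are $Q$-weak equivalences by (A2), and a pullback-stability argument in the localized structure---which is right proper by the conclusion of Theorem~\ref{modif}---shows that $c$ is a $Q$-weak equivalence between objects on which $\eta$ is already a weak equivalence, hence an honest weak equivalence of $\cm$. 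This is exactly the statement that the naturality square is homotopy cartesian, completing the characterization.
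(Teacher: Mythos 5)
First, a remark on the comparison itself: the paper does not prove Theorem~\ref{Qfib} at all --- it is quoted verbatim from \cite[X-4.8]{Jardine} --- so your proposal can only be measured against the standard Bousfield--Friedlander/Goerss--Jardine argument. Its overall architecture (a technical lifting lemma, the pullback $P=B\times_{Q(B)}W$ of a fibrant factorization of $Q(f)$, and a retract argument) does follow that argument in outline, but there are two genuine gaps.

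The first is in your special-case lemma, which you correctly identify as the heart of the matter but do not actually prove. After reducing to the lifting problem for $j:U\to V$ against $Q(p):Q(X)\to Q(Y)$, you face a cofibration $j$ which is only a $Q$-weak equivalence, not a weak equivalence of $\cm$, mapping against a fibration; a strict lift does not exist for formal reasons, and condition (A3) cannot supply one. Condition (A3) only asserts that pulling back $\eta_A$ along a $Q$-fibration yields a $Q$-weak equivalence; it is used for the factorization lemma and for right properness of the localized structure, not to produce lifts. What is needed here is the homotopy-lifting-and-rigidification argument of \cite[X-4.5]{Jardine}: one first produces a lift up to homotopy, using that $Q(j)$ is a weak equivalence and that $\eta$ is a weak equivalence on $Q(X)$ and $Q(Y)$ by (A2), together with fibrancy of these objects, and then corrects it to a strict lift via the homotopy extension property of the cofibration $j$ and the homotopy lifting property of the fibration $Q(p)$. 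A smaller point in the converse direction: your lemma, stated for the strict pullback of $Q(p)$, does not literally apply to $P\to B$, since $P=B\times_{Q(B)}W$ with $W\neq Q(P)$; you need it in the form ``a fibration between objects on which $\eta$ is a weak equivalence is a $Q$-fibration,'' applied to $W\to Q(B)$, followed by closure of $Q$-fibrations under pullback.

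The second gap is in the forward implication. You assert that the comparison $c:A\to P$ is ``a $Q$-weak equivalence between objects on which $\eta$ is already a weak equivalence, hence an honest weak equivalence of $\cm$.'' But $A$ is merely the source of a $Q$-fibration and $P$ is a pullback built from $B$; there is no reason for $\eta_A$ or $\eta_P$ to be weak equivalences in $\cm$, so this step fails. The standard repair is the factorization of \cite[X-4.6]{Jardine}: write $f$ as a cofibration which is a $Q$-weak equivalence followed by a $Q$-fibration $q$ whose naturality square is homotopy cartesian by construction --- this is where (A3) genuinely enters, to see that $P\to W$ is a $Q$-weak equivalence --- then lift against $f$ to exhibit $f$ as a retract of $q$, and conclude because homotopy cartesian squares are closed under retracts.
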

\begin{corollary}{\cite[X-4.12]{Jardine}}\label{cormodif}
Suppose that the right proper Quillen model category $\cm$ and the functor $Q$ together satisfy the conditions for theorem~\ref{modif}. Then an object $A$ of $\cm$ is $Q$-fibrant if and only if it is fibrant in $\cm$ and the map $\eta_A:A \rightarrow Q(A)$ is a weak equivalence in $\cm$.
\end{corollary}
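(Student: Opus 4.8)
The plan is to deduce the statement directly from Theorem~\ref{Qfib} by specializing to the unique map $A \rightarrow \bullet$ to the terminal object, an object $A$ being $Q$-fibrant (i.e.\ fibrant in the localized structure of Theorem~\ref{modif}) precisely when this map is a $Q$-fibration. Theorem~\ref{Qfib} then says that $A \rightarrow \bullet$ is a $Q$-fibration if and only if it is a fibration in $\cm$---equivalently, $A$ is fibrant in $\cm$---and the square with top row $\eta_A: A \rightarrow Q(A)$, bottom row $\eta_\bullet:\bullet \rightarrow Q(\bullet)$, and right-hand vertical $Q(A\rightarrow\bullet)$ is homotopy cartesian in $\cm$. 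The whole statement therefore reduces to the single assertion that, for such a square, being homotopy cartesian is equivalent to $\eta_A$ being a weak equivalence.

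The crux is the auxiliary claim that $\eta_\bullet:\bullet \rightarrow Q(\bullet)$ is itself a weak equivalence in $\cm$. Writing $r:Q(\bullet)\rightarrow \bullet$ for the unique map, we have $r\circ \eta_\bullet = \mathrm{id}_\bullet$; applying $Q$ gives $Q(r)\circ Q(\eta_\bullet)=\mathrm{id}_{Q(\bullet)}$, and since $Q(\eta_\bullet)$ is a weak equivalence by (A2), two-out-of-three forces $Q(r)$ to be one as well. Naturality of $\eta$ for the map $r$ yields $\eta_\bullet\circ r = Q(r)\circ \eta_{Q(\bullet)}$, and as $\eta_{Q(\bullet)}$ is a weak equivalence by (A2), the right-hand composite, hence $\eta_\bullet\circ r$, is a weak equivalence. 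Thus in $\mathsf{Ho}(\cm)$ the map $\eta_\bullet$ has $r$ as a left inverse, while $\eta_\bullet\circ r$ is invertible so that $\eta_\bullet$ also admits a right inverse; being split mono and split epi, $\eta_\bullet$ is an isomorphism in $\mathsf{Ho}(\cm)$, i.e.\ a weak equivalence in $\cm$.

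With this in hand I would compute the homotopy pullback of the cospan $Q(A)\xrightarrow{Q(A\rightarrow\bullet)}Q(\bullet)\xleftarrow{\eta_\bullet}\bullet$ using right properness of $\cm$ (the sole place the hypotheses of Theorem~\ref{modif} are invoked here). Factoring $Q(A)\rightarrow Q(\bullet)$ as a trivial cofibration $Q(A)\xrightarrow{\sim}W$ followed by a fibration $W\twoheadrightarrow Q(\bullet)$, the homotopy pullback is the strict pullback $\bullet\times_{Q(\bullet)}W$. Since $\eta_\bullet$ is a weak equivalence and $W\rightarrow Q(\bullet)$ is a fibration, right properness makes the base-change projection $\bullet\times_{Q(\bullet)}W\rightarrow W\simeq Q(A)$ a weak equivalence, so the homotopy pullback is identified with $Q(A)$. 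Under this identification the comparison map out of $A$ is exactly $\eta_A$, whence the square is homotopy cartesian if and only if $\eta_A$ is a weak equivalence. Chaining this with the application of Theorem~\ref{Qfib} gives precisely the asserted equivalence.

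I expect the main obstacle to be the auxiliary claim that $\eta_\bullet$ is a weak equivalence: it is the only step where one must extract an honest $\cm$-weak equivalence (not merely a $Q$-weak equivalence) from the axioms, and it is exactly what makes the homotopy pullback collapse onto $Q(A)$ so that the homotopy cartesian condition degenerates to invertibility of $\eta_A$. Everything else is a formal consequence of Theorem~\ref{Qfib} together with right properness.
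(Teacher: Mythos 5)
Your proof is correct and follows the route the paper itself implicitly takes: the statement is only cited from Goerss--Jardine, and the standard deduction is exactly yours, namely specializing Theorem~\ref{Qfib} to the map $A \rightarrow \bullet$ and unwinding the homotopy cartesian condition. Your auxiliary verification that $\eta_{\bullet}$ is a weak equivalence (via the retraction $r\circ\eta_{\bullet}=\mathrm{id}$, axiom (A2), and saturation of weak equivalences) is sound and is the one genuinely non-formal point.
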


\section{Non-additive filtration argument}
Let $\cd$ be a closed symmetric monoidal category. We denote by $-\wedge-$ its symmetric monoidal product and  by ${\bf 1}$ its unit. We start with some general arguments.
\begin{definition}\label{funcU}
Consider the functor
$$U: \cd \longrightarrow \cd\text{-}\mbox{Cat}\,,$$
which sends an object $X\in \cd$ to the $\cd$-category $U(X)$, with two objects $1$ and $2$ and such that $U(X)(1,1)=U(X)(2,2)={\bf 1}, \,U(X)(1,2)=X$ and $U(X)(2,1)=0$, where $0$ denotes the initial object in $\cd$. Composition is naturally defined (notice that $0$ acts as a zero with respect to $\wedge$ since the bi-functor $-\wedge-$ preserves colimits in each of its variables).
\end{definition}

\begin{remark}\label{rkA}
For an object $X \in \cd$, the object $U(X) \in \cd\text{-}\mbox{Cat}$ co-represents the following functor
$$ 
\begin{array}{rcc}
\cd\text{-}\mbox{Cat} & \longrightarrow & \mathbf{Set} \\
\ca & \mapsto & \underset{(x,y) \in \ca\times \ca}{\coprod} \mathsf{Hom}_{\cd}(X,\ca(x,y))\,.
\end{array}
$$
This shows us, in particular, that the functor $U$ preserves sequentially small objects.
\end{remark}

Now, suppose that $\cd$ is a monoidal model category, with cofibrant unit, which satisfies the monoid axiom \cite[3.3]{SS}. In what follows, by {\em smash product} we mean the symmetric product $-\wedge-$ of $\cd$.
\begin{proposition}{\cite[B.3]{Spectral}}\label{clef}
Let $\ca$ be a $\cd$-category, $j:K\rightarrow L$ a trivial cofibration in $\cd$ and $U(K) \stackrel{F}{\rightarrow} \ca$ a morphism in $\cd\text{-}\mbox{Cat}$. Then, in the pushout
$$
\xymatrix{
U(K) \ar[r]^-F \ar[d]_{U(j)} \ar@{}[dr]|{\lrcorner} & \ca \ar[d]^R \\
U(L) \ar[r] & \cb
}
$$
the morphisms
$$ R(x,y):\ca(x,y) \longrightarrow \cb(x,y), \,\,\,\, x,y\in \ca$$
are weak equivalences in $\cv$.
\end{proposition}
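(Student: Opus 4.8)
The plan is to compute the hom-objects of the pushout $\cb$ by an explicit ``reduced word'' filtration and then to feed that filtration into the monoid axiom. First I would note that $U(j)$ is the identity on the two-element object set $\{1,2\}$, so the pushout $\cb$ has the same objects as $\ca$ and $R$ is the identity on objects; write $a := F(1)$, $b := F(2)$ and let $\phi\colon K \to \ca(a,b)$ be the $(1,2)$-component of $F$. Then $\cb$ is $\ca$ with a new morphism $\ell\colon a\to b$ lying over $L$ freely adjoined, subject to the relation that the part of $\ell$ coming from $K$ agrees with $\phi$. Concretely, for fixed objects $x,y$ every morphism of $\cb(x,y)$ is an alternating composite $x\to a \xrightarrow{\ell} b \to \cdots \xrightarrow{\ell} b \to y$, so the words using $\ell$ exactly $n$ times contribute
$$ W_n(x,y) := \ca(b,y)\wedge L\wedge \ca(b,a)\wedge\cdots\wedge \ca(b,a)\wedge L\wedge \ca(x,a), $$
with $n$ copies of $L$ and $n-1$ interior copies of $\ca(b,a)$, while $W_0(x,y)=\ca(x,y)$.

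Next I would organise these words into a filtration $\ca(x,y)=F_0(x,y)\subseteq F_1(x,y)\subseteq\cdots$ of $\cb(x,y)$ by the number of occurrences of $\ell$, so that $\cb(x,y)=\operatorname{colim}_n F_n(x,y)$. The technical heart of the argument --- the actual ``non-additive filtration'' bookkeeping --- is to identify each gluing map $F_{n-1}(x,y)\to F_n(x,y)$ as a cobase change. The ``degenerate'' part of $W_n(x,y)$, where at least one of the $n$ copies of $L$ lands in $K$ and is thereby absorbed into the neighbouring $\ca$-factors via $\phi$ and the composition of $\ca$, is exactly the domain $P_n$ of the $n$-fold iterated pushout-product $P_n\to L^{\wedge n}$ of $j$ with itself, smashed with the fixed object $Z := \ca(b,y)\wedge \ca(b,a)^{\wedge(n-1)}\wedge \ca(x,a)$. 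Using the symmetry of $\wedge$ to gather the $L$-factors, I would show that $F_{n-1}(x,y)\to F_n(x,y)$ is the cobase change of the map $Z\wedge(P_n\to L^{\wedge n})$.

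Finally I would invoke the hypotheses on $\cd$. Since $j$ is a trivial cofibration and $\cd$ is a monoidal model category, the pushout-product axiom shows by induction on $n$ that the iterated pushout-product $P_n\to L^{\wedge n}$ is again a trivial cofibration. Hence $Z\wedge(P_n\to L^{\wedge n})$ is a map of the form (trivial cofibration) $\wedge$ (object), so every gluing map $F_{n-1}(x,y)\to F_n(x,y)$ belongs to the class of maps controlled by the monoid axiom. As $R(x,y)\colon \ca(x,y)=F_0(x,y)\to\cb(x,y)$ is the transfinite composition of these gluing maps, the monoid axiom \cite[3.3]{SS} (which $\cd$ satisfies by assumption) forces $R(x,y)$ to be a weak equivalence in $\cd$, as required.

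The step I expect to be the main obstacle is the middle one: setting up the word-filtration and checking that the successive quotients are governed by the pushout-products $P_n\to L^{\wedge n}$ smashed with the appropriate product of $\ca$-homs. Once this combinatorial description is in place, the passage to a weak equivalence is a formal consequence of the pushout-product axiom together with the monoid axiom, the latter being essential precisely because the factors $Z$ need not be cofibrant.
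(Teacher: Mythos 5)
Your proposal is correct and is essentially the argument the paper relies on: the paper does not reprove this statement but cites \cite[B.3]{Spectral}, whose proof is exactly the Schwede--Shipley--style ``non-additive filtration argument'' (after which the appendix is named) --- filtering $\cb(x,y)$ by the number of occurrences of the adjoined generator, identifying each gluing map as a cobase change of the iterated pushout-product of $j$ smashed with a product of hom-objects, and concluding via the pushout-product and monoid axioms. No substantive differences to report.
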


\begin{proposition}{\cite[B.4]{Spectral}}\label{clef1}
Let $\ca$ be a $\cd$-category such that $\ca(x,y)$ is cofibrant in $\cv$ for all $x,y \in \ca$ and $i:N \rightarrow M$ a cofibration in $\cd$. Then, in a pushout in $\cd \text{-}\mbox{Cat}$
$$
\xymatrix{
U(N) \ar[r]^F \ar[d]_{U(i)} \ar@{}[dr]|{\lrcorner} & \ca \ar[d]^R \\
U(M) \ar[r] & \cb
}
$$
the morphisms
$$ R(x,y):\ca(x,y) \longrightarrow \cb(x,y), \,\,\,\, x,y\in \ca$$
are all cofibrations in $\cd$.
\end{proposition}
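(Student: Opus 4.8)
The plan is to make the pushout $\cb$ completely explicit and then to filter its hom-objects by the number of copies of the freely adjoined generator $M$, showing that each stage of the filtration is a cofibration produced as a pushout of an iterated pushout-product of $i$.

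First I would unravel the pushout. Write $a:=F(1)$ and $b:=F(2)$ for the two objects of $\ca$ in the image of $F$. Since $U(M)(2,1)=0$ and $U(M)(1,1)=U(M)(2,2)={\bf 1}$, the pushout $\cb$ has the same objects as $\ca$, and a morphism from $x$ to $y$ in $\cb$ is an alternating concatenation
\[
\ca(b,y)\wedge M\wedge\ca(b,a)\wedge M\wedge\cdots\wedge M\wedge\ca(x,a),
\]
of $\ca$-morphisms and copies of the generator $M\colon a\to b$ (each $M$ separated from the next by an $\ca$-morphism $b\to a$), subject to the identification of the sub-object $N\subseteq M$ with its image under $F\colon N\to\ca(a,b)$. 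Filtering each hom-object by the number of $M$-factors yields
\[
\ca(x,y)=\cb_0(x,y)\subseteq\cb_1(x,y)\subseteq\cb_2(x,y)\subseteq\cdots,\qquad \mathrm{colim}_{n}\cb_n(x,y)=\cb(x,y),
\]
so that $R(x,y)$ is the sequential (hence transfinite) composite of the maps $\cb_{n-1}(x,y)\to\cb_n(x,y)$, and it suffices to prove that each of these is a cofibration in $\cd$.

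Next I would identify the $n$-th filtration quotient. For fixed $x,y$ set
\[
W_n(x,y):=\ca(b,y)\wedge\big(M\wedge\ca(b,a)\big)^{\wedge(n-1)}\wedge M\wedge\ca(x,a),
\]
the object of words with exactly $n$ copies of $M$, and let $Q_n(x,y)\to W_n(x,y)$ be the corner map obtained by applying the $n$-fold pushout-product of $i\colon N\to M$ in the $n$ occupied slots, with the interspersed factors $\ca(b,a)$ and the outer factors $\ca(b,y),\ca(x,a)$ smashed in. Concretely $Q_n(x,y)$ is the colimit over all ways of replacing at least one $M$ by its sub-object $N$; it records exactly those words that, after absorbing an $N$-part into a neighbouring $\ca$-morphism through $F$, already lie in $\cb_{n-1}(x,y)$. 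The key claim is that $\cb_n(x,y)$ is the pushout of $\cb_{n-1}(x,y)$ along $Q_n(x,y)\to W_n(x,y)$. This \emph{non-additive} bookkeeping of alternating words and $N$-absorptions is where I expect the real work to lie.

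Finally I would invoke the pushout-product axiom. Since $\cd$ is a monoidal model category and $i$ is a cofibration, the iterated pushout-product corner map $Q_n(x,y)\to W_n(x,y)$ is a cofibration; here the hypothesis that every $\ca(u,v)$ is cofibrant is exactly what is needed so that smashing this corner map with the cofibrant interspersed factors $\ca(b,a)$ and the cofibrant outer factors $\ca(b,y)$, $\ca(x,a)$ still yields a cofibration. Consequently each $\cb_{n-1}(x,y)\to\cb_n(x,y)$ is a pushout of a cofibration, hence a cofibration, and therefore so is their composite $R(x,y)\colon\ca(x,y)\to\cb(x,y)$. The main obstacle is the middle step — the precise description of the pushout and the verification that the filtration quotients are governed by the interspersed iterated pushout-product; the very same filtration underlies Proposition~\ref{clef}, where one instead feeds the trivial-cofibration hypothesis through the monoid axiom to obtain weak equivalences rather than cofibrations.
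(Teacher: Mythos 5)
Your proposal is correct and follows essentially the same route as the cited proof of \cite[B.4]{Spectral}: the appendix heading ``Non-additive filtration argument'' refers precisely to this filtration of $\cb(x,y)$ by the number of occurrences of the freely adjoined generator, with each filtration step a pushout along an iterated pushout-product of $i$ smashed with the (cofibrant) interspersed and outer hom-objects of $\ca$, in the style of Schwede--Shipley \cite{SS}. The only work left is the bookkeeping you already flag, namely the identification of the filtration quotients, which is exactly what the cited source carries out.
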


\end{document}